\providecommand{\tabularnewline}{\\}
\def\RSsubtxt{section~}\newref{sub}{name = \RSsubtxt}}
\def\RSthmtxt{theorem~}\newref{thm}{name = \RSthmtxt}}
\def\RSlemtxt{lemma~}\newref{lem}{name = \RSlemtxt}}
\numberwithin{equation}{section}
  \theoremstyle{plain}
  \newtheorem{assumption}{\protect\assumptionname}
  \theoremstyle{remark}
  \newtheorem{rem}{\protect\remarkname}[section]
  \theoremstyle{plain}
  \newtheorem{prop}{\protect\propositionname}[section]
  \theoremstyle{plain}
  \newtheorem{thm}{\protect\theoremname}[section]
  \theoremstyle{plain}
  \newtheorem{lem}{\protect\lemmaname}[section]
\setlist[enumerate,1]{label=\textnormal{(\roman*)}}
\setlist[enumerate,2]{label=\textnormal{(\alph*)}}
  \def\olabel#1#2{\begingroup
     \def\@currentlabel{#2}%
     \zlabel{#1}\endgroup
  }
\newcommand{\objlabel}[2]{\olabel{obj:#2}{#1}#1}%
\newcommand{\objref}[1]{\zref{obj:#1}}
\date{}
\numberwithin{equation}{section}
\newcounter{sectemp}
\newcounter{eqtemp}
\newcommand{\restoreposition}[1]{
  \setcounter{section}{\value{sectemp}}
  \setcounter{equation}{\value{eqtemp}}
}
\renewcommand{\left}{\mleft}
\renewcommand{\right}{\mright}
\providecommand{\LyX}{L\kern-.0467em\lower.25em\hbox{Y}\kern-.025emX\@} 
\newcommand*{\getlength}[1]{\strip@pt#1}
\newlength{\tempjot}
\newcommand{\setjot}[1]{\setlength{\tempjot}{\getlength{\jot}pt}\setlength{\jot}{#1pt}}
\newcommand{\restorejot}{\setlength{\jot}{\getlength{\tempjot}pt}\setlength{\tempjot}{100pt}}
\newcommand \cellfill {\leavevmode \leaders \hb@xt@ .44em{\hss .\hss }\hfill \kern \z@}
  \providecommand{\assumptionname}{Assumption}
  \providecommand{\lemmaname}{Lemma}
  \providecommand{\propositionname}{Proposition}
  \providecommand{\remarkname}{Remark}
\providecommand{\theoremname}{Theorem}
\begin{document}
\selectlanguage{english}%


\global\long\def\uwrite#1#2{\underset{#2}{\underbrace{#1}} }

\global\long\def\blw#1{\ensuremath{\underline{#1}}}

\global\long\def\abv#1{\ensuremath{\overline{#1}}}

\global\long\def\vect#1{\mathbf{#1}}


\global\long\def\smlseq#1{\{#1\} }

\global\long\def\seq#1{\left\{  #1\right\}  }

\global\long\def\smlsetof#1#2{\{#1\mid#2\} }

\global\long\def\setof#1#2{\left\{  #1\mid#2\right\}  }


\global\long\def\goesto{\ensuremath{\rightarrow}}

\global\long\def\ngoesto{\ensuremath{\nrightarrow}}

\global\long\def\uto{\ensuremath{\uparrow}}

\global\long\def\dto{\ensuremath{\downarrow}}

\global\long\def\uuto{\ensuremath{\upuparrows}}

\global\long\def\ddto{\ensuremath{\downdownarrows}}

\global\long\def\ulrto{\ensuremath{\nearrow}}

\global\long\def\dlrto{\ensuremath{\searrow}}


\global\long\def\setmap{\ensuremath{\rightarrow}}

\global\long\def\elmap{\ensuremath{\mapsto}}

\global\long\def\compose{\ensuremath{\circ}}

\global\long\def\cont{C}

\global\long\def\cadlag{D}

\global\long\def\Ellp#1{\ensuremath{L^{#1}}}


\global\long\def\naturals{\ensuremath{\mathbb{N}}}

\global\long\def\reals{\mathbb{R}}

\global\long\def\complex{\mathbb{C}}

\global\long\def\rationals{\mathbb{Q}}

\global\long\def\integers{\mathbb{Z}}


\global\long\def\abs#1{\ensuremath{\left|#1\right|}}

\global\long\def\smlabs#1{\ensuremath{\lvert#1\rvert}}
 \global\long\def\bigabs#1{\ensuremath{\bigl|#1\bigr|}}
 \global\long\def\Bigabs#1{\ensuremath{\Bigl|#1\Bigr|}}
 \global\long\def\biggabs#1{\ensuremath{\biggl|#1\biggr|}}

\global\long\def\norm#1{\ensuremath{\left\Vert #1\right\Vert }}

\global\long\def\smlnorm#1{\ensuremath{\lVert#1\rVert}}
 \global\long\def\bignorm#1{\ensuremath{\bigl\|#1\bigr\|}}
 \global\long\def\Bignorm#1{\ensuremath{\Bigl\|#1\Bigr\|}}
 \global\long\def\biggnorm#1{\ensuremath{\biggl\|#1\biggr\|}}


\global\long\def\Union{\ensuremath{\bigcup}}

\global\long\def\Intsect{\ensuremath{\bigcap}}

\global\long\def\union{\ensuremath{\cup}}

\global\long\def\intsect{\ensuremath{\cap}}

\global\long\def\pset{\ensuremath{\mathcal{P}}}

\global\long\def\clsr#1{\ensuremath{\overline{#1}}}

\global\long\def\symd{\ensuremath{\Delta}}

\global\long\def\intr{\operatorname{int}}

\global\long\def\cprod{\otimes}

\global\long\def\Cprod{\bigotimes}


\global\long\def\smlinprd#1#2{\ensuremath{\langle#1,#2\rangle}}

\global\long\def\inprd#1#2{\ensuremath{\left\langle #1,#2\right\rangle }}

\global\long\def\orthog{\ensuremath{\perp}}

\global\long\def\dirsum{\ensuremath{\oplus}}


\global\long\def\spn{\operatorname{sp}}

\global\long\def\rank{\operatorname{rk}}

\global\long\def\proj{\operatorname{proj}}

\global\long\def\tr{\operatorname{tr}}

\global\long\def\diag{\operatorname{diag}}


\global\long\def\smpl{\ensuremath{\Omega}}

\global\long\def\elsmp{\ensuremath{\omega}}

\global\long\def\sigf#1{\mathcal{#1}}

\global\long\def\sigfield{\ensuremath{\mathcal{F}}}
\global\long\def\sigfieldg{\ensuremath{\mathcal{G}}}

\global\long\def\flt#1{\mathcal{#1}}

\global\long\def\filt{\mathcal{F}}
\global\long\def\filtg{\mathcal{G}}

\global\long\def\Borel{\ensuremath{\mathcal{B}}}

\global\long\def\cyl{\ensuremath{\mathcal{C}}}

\global\long\def\nulls{\ensuremath{\mathcal{N}}}

\global\long\def\gauss{\mathfrak{g}}

\global\long\def\leb{\mathfrak{m}}


\global\long\def\prob{P}

\global\long\def\Prob{\ensuremath{\mathbb{P}}}

\global\long\def\Probs{\mathcal{P}}

\global\long\def\PROBS{\mathcal{M}}

\global\long\def\expect{\mathbb{E}}

\global\long\def\probspc{\ensuremath{(\smpl,\filt,\Prob)}}


\global\long\def\iid{\ensuremath{\textnormal{i.i.d.}}}

\global\long\def\as{\ensuremath{\textnormal{a.s.}}}

\global\long\def\asp{\ensuremath{\textnormal{a.s.p.}}}

\global\long\def\io{\ensuremath{\ensuremath{\textnormal{i.o.}}}}

\newcommand\independent{\protect\mathpalette{\protect\independenT}{\perp}}
\def\independenT#1#2{\mathrel{\rlap{$#1#2$}\mkern2mu{#1#2}}}

\global\long\def\indep{\independent}

\global\long\def\distrib{\ensuremath{\sim}}

\global\long\def\distiid{\ensuremath{\sim_{\iid}}}

\global\long\def\asydist{\ensuremath{\overset{a}{\distrib}}}

\global\long\def\inprob{\ensuremath{\overset{p}{\goesto}}}

\global\long\def\inprobu#1{\ensuremath{\overset{#1}{\goesto}}}

\global\long\def\inas{\ensuremath{\overset{\as}{\goesto}}}

\global\long\def\eqas{=_{\as}}

\global\long\def\inLp#1{\ensuremath{\overset{\Ellp{#1}}{\goesto}}}

\global\long\def\indist{\ensuremath{\overset{d}{\goesto}}}

\global\long\def\eqdist{=_{d}}

\global\long\def\wkc{\ensuremath{\rightsquigarrow}}

\global\long\def\fdd{\ensuremath{\rightsquigarrow_{\mathrm{fdd}}}}

\global\long\def\wkcu#1{\overset{#1}{\ensuremath{\rightsquigarrow}}}

\global\long\def\plim{\operatorname*{plim}}


\global\long\def\var{\operatorname{var}}

\global\long\def\lrvar{\operatorname{lrvar}}

\global\long\def\cov{\operatorname{cov}}

\global\long\def\corr{\operatorname{corr}}

\global\long\def\bias{\operatorname{bias}}

\global\long\def\MSE{\operatorname{MSE}}

\global\long\def\med{\operatorname{med}}


\global\long\def\simple{\mathcal{R}}

\global\long\def\sring{\mathcal{A}}

\global\long\def\sproc{\mathcal{H}}

\global\long\def\Wiener{\ensuremath{\mathbb{W}}}

\global\long\def\sint{\bullet}

\global\long\def\cv#1{\left\langle #1\right\rangle }

\global\long\def\smlcv#1{\langle#1\rangle}

\global\long\def\qv#1{\left[#1\right]}

\global\long\def\smlqv#1{[#1]}


\global\long\def\trans{\ensuremath{\prime}}

\global\long\def\indic{\ensuremath{\mathbf{1}}}

\global\long\def\Lagr{\mathcal{L}}

\global\long\def\grad{\nabla}

\global\long\def\pmin{\ensuremath{\wedge}}
\global\long\def\Pmin{\ensuremath{\bigwedge}}

\global\long\def\pmax{\ensuremath{\vee}}
\global\long\def\Pmax{\ensuremath{\bigvee}}

\global\long\def\sgn{\operatorname{sgn}}

\global\long\def\argmin{\operatorname*{argmin}}

\global\long\def\argmax{\operatorname*{argmax}}

\global\long\def\Rp{\operatorname{Re}}

\global\long\def\Ip{\operatorname{Im}}

\global\long\def\deriv{\ensuremath{\mathrm{d}}}

\global\long\def\diffnspc{\ensuremath{\deriv}}

\global\long\def\diff{\ensuremath{\,\deriv}}

\global\long\def\i{\ensuremath{\mathrm{i}}}

\global\long\def\e{\mathrm{e}}

\global\long\def\sep{,\ }

\global\long\def\defeq{\coloneqq}

\global\long\def\eqdef{\eqqcolon}

\selectlanguage{british}%

\selectlanguage{english}%

\global\long\def\S{\mathcal{S}}

\global\long\def\M{\mathcal{M}}

\global\long\def\N{\mathcal{N}}

\global\long\def\V{\mathcal{V}}

\global\long\def\U{\mathcal{U}}

\global\long\def\R{\mathcal{R}}

\global\long\def\mg{\xi}

\global\long\def\minidx{k_{0}}

\global\long\def\minidxm{m_{0}}

\global\long\def\smltail#1{\smlnorm{#1}_{\omega}}

\global\long\def\alphnorm#1#2{\norm{#1}_{[#2]}}

\global\long\def\smlalph#1#2{\smlnorm{#1}_{[#2]}}

\global\long\def\smlmom#1#2{\smlnorm{#1}_{#2}}

\newcommandx\RV[1][usedefault, addprefix=\global, 1=]{\mathrm{RV}(#1)}

\global\long\def\BI{\mathrm{BI}}

\global\long\def\BInorm#1{\smlnorm{#1}_{\BI}}

\global\long\def\BIalph#1{\mathrm{BI}_{[#1]}}

\global\long\def\BILalph#1{\mathrm{BIL}_{[#1]}}

\global\long\def\BImom#1{\mathrm{BI}_{#1}}

\global\long\def\BILmom#1{\mathrm{BIL}_{#1}}

\global\long\def\BIz{\mathrm{BI_{0}}}

\global\long\def\BIc{\mathrm{BIC}}

\global\long\def\BIcz{\mathrm{BIC}_{0}}

\global\long\def\BIl{\mathrm{BIL}}

\global\long\def\BIlz{\mathrm{BIL}_{0}}

\global\long\def\Lip{\mathrm{Lip}}

\global\long\def\Reg{\mathrm{BI_{R}}}

\global\long\def\locest{\mathcal{L}_{n}}

\global\long\def\loctime{\mathcal{L}}

\global\long\def\lmest{\mathcal{\mu}_{n}}

\global\long\def\lmeas{\mathcal{\mu}}

\global\long\def\minpr{\mathfrak{m}}

\global\long\def\maxpr{\mathfrak{M}}

\global\long\def\boundpr{\mathfrak{U}}

\global\long\def\major#1{\mathcal{#1}}

\global\long\def\grid{\mathbb{G}}

\global\long\def\ucc{\mathrm{ucc}}

\global\long\def\Hspc{\mathscr{H}}

\global\long\def\Bspc{\mathscr{B}}

\global\long\def\Aspc{\mathscr{A}}

\global\long\def\Cspc{\mathscr{C}}

\global\long\def\gauss{\phi_{\mathfrak{g}}}

\global\long\def\Gauss{\Phi_{\mathfrak{g}}}

\global\long\def\cmpct{\varphi}

\global\long\def\Fset{\mathscr{F}}

\global\long\def\Gset{\mathscr{G}}

\global\long\def\Uset{\mathscr{U}}

\global\long\def\locp{\kappa}

\global\long\def\signal{S}

\global\long\def\cfidx{p_{0}}

\global\long\def\smlfloor#1{\lfloor#1\rfloor}

\global\long\def\smlceil#1{\lceil#1\rceil}

\global\long\def\bkt{[\,]}

\global\long\def\bktpower{\beta}

\global\long\def\ctrproc{\nu_{n}}

\newcommand{\SM}{\textnormal{(SM)}}

\newcommand{\AP}{\textnormal{(AP)}}

\newcommand{\LM}{\textnormal{(LM)}}

\newcommand{\IN}{\textnormal{(IN)}}

\newcommand{\CR}{\textnormal{(CR)}}

\global\long\def\diam{\operatorname{diam}}

\global\long\def\nseq{e}

\global\long\def\etamom{q_{0}}

\global\long\def\bandmax{r_{0}}

\global\long\def\isbigop{\lesssim_{p}}

\global\long\def\derivbnd#1{\abv m_{#1}}

\global\long\def\truncnorm#1{\smlnorm{#1}_{n}}

\global\long\def\intfnal{\iota}
\selectlanguage{british}%


\defcitealias{VVW96}{VW}

\defcitealias{Bingham87}{BGT}

\newcommand{\XREFgeneralIP}{Theorem~3.1}

\newcommand{\XREFfclt}{Proposition~2.1}

\newcommand{\XREFmaxSn}{Proposition~4.2}

\newcommand{\XREFmostsupp}{(5.4)}

\newcommand{\XREFlemmasproof}{Section~F}

\newcommand{\XREFcfprod}{9.4}

\newcommand{\XREFmgax}{7.1}

\newcommand{\XREFptwise}{Section~7}

\newcommand{\XREFfundamental}{Lemma~7.3}

\newcommand{\XREFmgbnd}{Lemma~7.4}

\newcommand{\XREFlipobj}{(C.1)}

\newcommand{\XREFliptail}{Lemma~B.1}

\newcommand{\XREFgridapprox}{Lemma~6.1}

\newcommand{\XREFalphzeroen}{Lemma~9.1(ii)}

\newcommand{\XREFmaxSnb}{Remark~4.1}

\newcommand{\XREFfourinv}{(9.1)}

\newcommand{\XREFcfprodbnd}{Lemma~F.2}

\newcommand{\XREFstmom}{Lemma~9.3(i)}

\title{Uniform Convergence Rates over Maximal Domains in Structural Nonparametric
Cointegrating Regression}

\author{James A.\ Duffy\thanks{Institute for New Economic Thinking, Oxford Martin School; and Economics
Department, University of Oxford; email: \texttt{james.duffy@economics.ox.ac.uk}.
This paper substantially revises and extends some results given in
an earlier paper of the author's \citep{Duffy13uniform}. The author
thanks Xiaohong Chen, Bent Nielsen and Peter Phillips for helpful
comments on this paper, and the earlier work. The manuscript was prepared
with \LyX{}~2.1.3 and JabRef~2.7b.}}
\maketitle
\begin{abstract}
\noindent This paper presents uniform convergence rates for kernel
regression estimators, in the setting of a structural nonlinear cointegrating
regression model. We generalise the existing literature in three ways.
First, the domain to which these rates apply is much wider than has
been previously considered, and can be chosen so as to contain as
large a fraction of the sample as desired in the limit. Second, our
results allow the regression disturbance to be serially correlated,
and cross-correlated with the regressor; previous work on this problem
(of obtaining uniform rates) having been confined entirely to the
setting of an exogenous regressor. Third, we permit the bandwidth
to be data-dependent, requiring only that it satisfy certain weak
asymptotic shrinkage conditions. Our assumptions on the regressor
process are consistent with a very broad range of departures from
the standard unit root autoregressive model, allowing the regressor
to be fractionally integrated, and to have an infinite variance (and
even infinite lower-order moments).
\end{abstract}

\section{Introduction\label{sec:intro}}

Whereas data on a stationary regressor will lie, with high probability,
within a fixed bounded interval of sufficient width, the randomly
wandering nature of an integrated process prevents it from being contained
within any such interval, no matter how wide. Consequently, the global
nonparametric estimation of a regression function taking the latter
as an argument is considerably more difficult, as it requires one
to approximate the regression function on an ever-expanding domain
-- widening probabilistically at rate $n^{1/2}$, in the unit root
case. The inherent randomness of the limiting occupation density associated
with the (standardised) regressor process poses a further challenge,
complicating the identification of domains on which observations may
be guaranteed to accumulate, in a manner that seems not to have any
parallel in the case of a stationary regressor.

This paper considers kernel nonparametric estimators of $m_{0}$ in
the nonlinear cointegrating model
\begin{equation}
y_{t}=m_{0}(x_{t})+u_{t}\label{eq:basicmodel}
\end{equation}
where $x_{t}=\sum_{s=1}^{t}v_{t}$ is the partial sum of a linear
process $\{v_{t}\}$, and $\{u_{t}\}$ is an unobserved disturbance
process. Regarding the pointwise consistency and asymptotic normality
of these estimators, we refer in particular to \citet{KMT07AS} and
\citet{WP09ET,WP09Ecta}. Our assumptions on the mechanism generating
$\{x_{t}\}$ are very general, not only permitting fractional integration
of order $d\in(-\tfrac{1}{2},\tfrac{3}{2})$ -- where $d=1$ corresponds
to the familiar unit root autoregressive model -- but also including
the case where the variance (and even lower-order moments) of $\{v_{t}\}$
do not exist.

We obtain rates of uniform convergence for our estimators on a sample-dependent
sequence of domains, which correspond as nearly as possible to the
entire empirical support of $\{x_{t}\}_{t=1}^{n}$ in the sense that
they may be chosen so as to contain as large a fraction of the data
as desired in the limit. These domains are thus maximally wide; in
contrast, previous work on uniform convergence rates in this setting
has been limited to the consideration of smaller, deterministically
expanding intervals, which necessarily contain an asymptotically negligible
fraction of the data (see \citealp{WW13ET}; \citealp{CW13mimeo};
and \citealp{GLT13mimeo}). Being able to estimate $m_{0}$ uniformly
on such wide domains should be especially useful in the context of
certain semiparametric estimation problems, such as arise when $m_{0}(x_{t})$
in \eqref{basicmodel} is replaced by the more general formulation
$m_{0}(x_{t}^{\prime}\beta_{0})$, where $x_{t}$ is a vector nonstationary
process, and both $\beta_{0}$ and $m_{0}$ are to be jointly estimated.
Clearly, only observations lying in those domains on which $m_{0}$
may be (uniformly) consistently estimated would be of any use in estimating
$\beta_{0}$; our results suggest, reassuringly, that `almost all'
the observed sample should be available for this purpose.

We further generalise previous work by permitting the regressor to
be endogenous, and the bandwidth sequence to be data-dependent in
a very general way. Endogeneity arises naturally in the setting of
cointegrating models such as \eqref{basicmodel}, which are so dynamically
under-specified as to be plausible only as a model of the long-run
equilibrium relationship between $\{y_{t}\}$ and $\{x_{t}\}$. The
exogeneity assumption typically -- though not universally, see \citet{WP09Ecta,WP13mimeo}
-- imposed in these models is thus unlikely to be satisfied in applications.
Moreover, in any application, the bandwidth used to compute a kernel
regression estimator will not be determined, \emph{a priori}, as some
function of the sample size, but will instead be chosen with at least
some reference to the sample at hand. To better accommodate this aspect
of actual empirical work, we allow the bandwidth to be functionally
dependent on the sample $\{(y_{t},x_{t})\}_{t=1}^{n}$, requiring
only that it satisfy a weak asymptotic shrinkage condition.

The proofs of these convergence rates are facilitated by a number
of new technical results. The first concerns the weak convergence
of the standardised \emph{signal} process
\begin{alignat}{1}
\loctime_{n}(a) & \defeq\frac{1}{\nseq_{n}h_{n}}\sum_{t=1}^{n}K\left(\frac{x_{t}-d_{n}a}{h_{n}}\right)\wkc\loctime(a)\label{eq:uniflaw}
\end{alignat}
in $\ell_{\ucc}(\reals)$, where: $\loctime$ denotes the occupation
density associated to the finite-dimensional limit of $X_{n}(r)\defeq d_{n}^{-1}x_{\smlfloor{nr}}$;
$\{\nseq_{n}\}$ is a norming sequence; $K\in L^{1}(\reals)$ is a
mean-zero kernel density function; and $h_{n}=o_{p}(1)$ is a smoothing
(bandwidth) sequence. This type of result is proved in \citet{Duffy14uniform}
and is reproduced as \propref{generalIP} below. \eqref{uniflaw}
may be loosely regarded as the nonstationary process counterpart of
the uniform convergence of the signal to the corresponding invariant
density that obtains when $\{x_{t}\}$ is stationary. In the present
setting, $\locest$ arises as the denominator of the Nadaraya-Watson
estimator, and so, when combined with a suitable characterisation
of the (random) support of $\loctime$, \eqref{uniflaw} allows us
to identify a sequence of domains on which the signal must accumulate
at a certain probabilistic rate.

The second class of relevant technical results supplies uniform order
estimates for
\begin{align*}
 & \frac{1}{(\nseq_{n}h_{n})^{1/2}}\sum_{t=1}^{n}f\left(\frac{x_{t}-d_{n}a}{h_{n}}\right)u_{t} &  & \frac{1}{(\nseq_{n}h_{n})^{1/2}}\sum_{t=1}^{n}g\left(\frac{x_{t}-d_{n}a}{h_{n}}\right)
\end{align*}
where $f,g\in L^{1}(\reals)$, $\int g=0$, and $\{u_{t}\}$ is weakly
dependent. These are referred to as the \emph{covariance} and \emph{zero
energy} processes, and are respectively relevant for a determination
of the uniform order of the variance and bias of a kernel regression
estimator. The estimates obtained here (\thmref{orderest} below)
appear to be new to the literature. While \citet{CW13mimeo} provide
an estimate for the covariance process when $\{x_{t}\}$ is exogenous,
our estimate holds even when $\{u_{t}\}$ is correlated with $\{x_{t}\}$,
and is within a $\log^{1/2}n$ factor of theirs. In consequence, endogeneity
of the regressor seems to penalise the rate of convergence of a kernel
regression estimator by merely a factor of $\log^{1/2}n$.

\paragraph*{Notation}

For a complete index of the notation used in this paper, see \appref{notation}
of the Supplement. For deterministic sequences $\{a_{n}\}$ and $\{b_{n}\}$,
we write $a_{n}\sim b_{n}$ if $\lim_{n\goesto\infty}a_{n}/b_{n}=1$,
and $a_{n}\asymp b_{n}$ if $\lim_{n\goesto\infty}a_{n}/b_{n}\in(-\infty,\infty)\backslash\{0\}$;
for random sequences, $a_{n}\isbigop b_{n}$ denotes $a_{n}=O_{p}(b_{n})$.
$X_{n}\wkc X$ denotes weak convergence in the sense of \citet{VVW96},
and $X_{n}\fdd X$ the convergence of finite-dimensional distributions.
For a metric space $(Q,d)$, $\ell_{\infty}(Q)$ denotes the space
of uniformly bounded functions on $Q$, equipped with the topology
of uniform convergence; while $\ell_{\ucc}(Q)$ denotes the space
of functions that are uniformly bounded on compact subsets of $Q$,
and is equipped with the topology of uniform convergence on compacta.
For $p\geq1$, $X$ a random variable, and $f:\reals\setmap\reals$,
$\smlnorm X_{p}\defeq(\expect\smlabs X^{p})^{1/p}$ and $\smlnorm f_{p}\defeq(\int_{\reals}\smlabs f^{p})^{1/p}$.
$\BI$ denotes the space of bounded and Lebesgue integrable functions
on $\reals$. $\smlfloor{\cdot}$ and $\smlceil{\cdot}$ respectively
denote the floor and ceiling functions. $C$ denotes a generic constant
that may take different values even at different places in the same
proof; $a\lesssim b$ denotes $a\leq Cb$.

\section{Discussion of results}

\subsection{Model and assumptions\label{sub:model}}

We are concerned with the estimation of $m_{0}$ in the model,
\begin{equation}
y_{t}=m_{0}(x_{t})+u_{t}\label{eq:basicreg}
\end{equation}
where $(x_{t},u_{t})$ satisfies
\begin{assumption}
\label{ass:reg}~
\begin{enumerate}
\item \label{enu:iidseq}$\{(\epsilon_{t},\eta_{t})\}$ is a bivariate i.i.d.\ sequence.
$\epsilon_{0}$ lies in the domain of attraction of a strictly stable
distribution with index $\alpha\in(0,2]$, and has characteristic
function $\psi(\lambda)\defeq\expect\e^{\i\lambda\epsilon_{0}}$ satisfying
$\psi\in L^{\cfidx}$ for some $\cfidx\geq1$. $\expect\eta_{0}=0$,
$\expect\smlabs{\epsilon_{0}\eta_{0}}<\infty$, and $\expect\smlabs{\eta_{0}}^{\etamom}<\infty$
for some $\etamom>2$.
\item \label{enu:regproc}$\{x_{t}\}$ is generated according to 
\begin{align}
x_{t} & \defeq\sum_{s=1}^{t}v_{s} & v_{t} & \defeq\sum_{k=0}^{\infty}\phi_{k}\epsilon_{t-k},\label{eq:regproc}
\end{align}
and either

\begin{enumerate}
\item \label{enu:SM}$\alpha\in(1,2]$, $\sum_{k=0}^{\infty}\smlabs{\phi_{k}}<\infty$
and $\phi\defeq\sum_{k=0}^{\infty}\phi_{k}\neq0$; or
\end{enumerate}

$\phi_{k}\sim k^{H-1-1/\alpha}\pi_{k}$ for some $\{\pi_{k}\}_{k\geq0}$
strictly positive and slowly varying at infinity, with
\begin{enumerate}[resume]
\item \label{enu:LM}$H>1/\alpha$; or
\item \label{enu:AP}$H<1/\alpha$ and $\sum_{k=0}^{\infty}\phi_{k}=0$.
\end{enumerate}

In both cases \enuref{LM} and \enuref{AP}, $H\in(\tfrac{1}{3},1)$.

\item \label{enu:dist}$\{u_{t}\}$, the regression disturbance, is the
linear process
\begin{equation}
u_{t}\defeq\sum_{k=0}^{\infty}\theta_{k}\eta_{t-k}\label{eq:disturbance}
\end{equation}
with $\sum_{k=0}^{\infty}\smlabs{\theta_{k}}k^{7/6}<\infty$.
\end{enumerate}
\end{assumption}
 
\begin{rem}
The preceding conditions may be compared with those imposed by \citet{WP09Ecta,WP13mimeo};
but quite unlike those authors, we do \emph{not} require $\expect\epsilon_{0}^{2}<\infty$.
Although our assumptions are consistent with substantial departures
from the standard unit root model -- which here coincides with \enuref{regproc}\enuref{SM}
with $\alpha=2$ -- $\{x_{t}\}$ is in all cases a partial sum process,
and this feature of the generating mechanism identifies \eqref{basicreg}
as a \emph{nonlinear cointegrating regression}, in the terminology
of \citet{PP01Ecta}. To allow the alternative forms of \enuref{regproc}
to be more concisely referenced, we shall regard part~\enuref{SM}
as corresponding to the case where $H=1/\alpha$.

The arguments used in this paper could be adapted to derive our main
results when $H\in(0,\frac{1}{3}]$. However, for $H$ falling within
this range, certain simplifications -- resulting, in particular, from
parts~\enuref{dm2} and \enuref{dna} of \lemref{summable} below
-- are unavailable, and the statement of our results would take a
more complicated form. To keep this paper to a reasonable length,
we have therefore restricted to $H\in(\tfrac{1}{3},1)$. (This restriction
is also necessary for certain related results in the literature: in
particular, see Theorems~4 and 5 in \citealp{Jeg08CDFP}.)
\end{rem}
 
\begin{rem}
Part~\enuref{dist} permits the regression disturbance to be serially
dependent, and cross-correlated with the regressor; \eqref{basicreg}
is thus a \emph{structural} model. \citet{WP13mimeo} allow $\{u_{t}\}$
to be generated in a slightly more general manner, according to 
\[
u_{t}=\sum_{k=0}^{\infty}[\theta_{k}\eta_{t-k}+\vartheta_{k}g(\epsilon_{t-k})]\eqdef u_{1t}+u_{2t}.
\]
By considering separately the cases in which $u_{t}=u_{1t}$ and $u_{t}=u_{2t}$,
it is easily seen that Theorems~\ref{thm:orderest} and \ref{thm:rates}
below hold, without any modification, when $u_{t}=u_{1t}+u_{2t}$,
provided that $\sum_{k=0}^{\infty}\smlabs{\vartheta_{k}}k^{7/6}<\infty$
and $g$ is such that $\expect g^{2}(\epsilon_{0})<\infty$.

Instead of \enuref{dist}, we might have required $\{u_{t},\tilde{\filt}_{t+1}\}$
to be a martingale difference sequence (m.d.s.), where $\tilde{\filt}_{t}\defeq\sigma(\{x_{s},u_{s-1}\}_{s\leq t})$
(see e.g.\ \citealp{PP01Ecta}, Ass.\ 2.1); we say that $\{u_{t}\}$
is an \emph{exogenous m.d.s.}\ in this case. While this alternative
assumption would be very convenient, it seems rather restrictive in
the setting of such an `under-specified' model as \eqref{basicreg},
in which any short-run dynamics affecting the relationship between
$y_{t}$ and $x_{t}$ must be absorbed into $u_{t}$.
\end{rem}
 
\begin{rem}
The requirement that $\sum_{k=0}^{\infty}\smlabs{\theta_{k}}k^{7/6}<\infty$
is stronger than is necessary to ensure the pointwise consistency
and asymptotic normality of kernel regression estimators in this setting:
see for example \citet{WP13mimeo}, who merely assume that $\sum_{k=0}^{\infty}\smlabs{\theta_{k}}k^{1/4}<\infty$.
Were \enuref{dist} to be relaxed in this direction, then the arguments
used to prove the main results of this paper could still be applied,
but the rates of convergence obtained would be complicated by the
presence of an additional term, the magnitude of which would depend,
in a somewhat complicated manner, on the rate at which $\theta_{k}\goesto0$
as $k\goesto\infty$.
\end{rem}

We shall consider both local level (Nadaraya-Watson) and local linear
estimators of $m_{0}$, to be denoted by $\hat{m}$ and $\hat{m}_{L}$
respectively. To facilitate nonparametric estimation, we require
\begin{assumption}
\label{ass:smooth}$m_{0}$ is twice continuously differentiable.
\end{assumption}
 
\begin{rem}
The preceding is stronger than is necessary for a determination of
the convergence rates of these estimators; our arguments would also
permit a derivation of these rates when \assref{smooth} is relaxed
to the Hlder continuity of $m_{0}$ or of its derivatives. We have
refrained from doing so only in order to permit our convergence rates
to be concisely stated.
\end{rem}

The construction of both estimators involves the use of a smoothing
kernel $K$, and a bandwidth sequence $\{h_{n}\}$. In order to state
our assumptions on $K$, let $\BIl$ denote the set of bounded, integrable
and Lipschitz continuous functions on $\reals$, and recall that $\{\nseq_{n}\}$
is the norming sequence that appears in \eqref{uniflaw} above (see
also \eqref{dkek} below).
\begin{assumption}
\label{ass:estim}~
\begin{enumerate}
\item $K\in\BIl$ is compactly supported, with $\int K=1$ and $\int xK(x)\diff x=0$.
\item $h_{n}\in\Hspc_{n}\defeq[\blw h_{n},\abv h]$ with probability approaching
1 (w.p.a.1), where $\abv h<\infty$, and $\blw h_{n}^{-1}\lesssim\nseq_{n}n^{-2\bandmax}$
for some $\bandmax>0$.
\end{enumerate}
\end{assumption}
 
\begin{rem}
The Lipschitz continuity and compact support of $K$ ease some of
our arguments, but are certainly not necessary for the most fundamental
results which underpin our derivations. For example, \XREFgeneralIP{}
in \citet{Duffy14uniform} requires only that $K$ have one-sided
Lipschitz approximants, a rather weak condition that \emph{is} consistent
with the presence of simple discontinuities.
\end{rem}
 
\begin{rem}
An important feature of the present work, relative to the preceding
literature on nonlinear cointegration, is that we permit the bandwidth
sequence to be random, and thus data-dependent, requiring only that
it take values lying in the (growing) interval $[\blw h_{n},\abv h_{n}]$,
w.p.a.1. This is of considerable utility in applications, where $h_{n}$
will typically be chosen with at least some reference to the sample
at hand, making the assumption that $\{h_{n}\}$ is a `given' deterministic
sequence quite unrealistic. In the i.i.d.\ regressor case, results
of this kind are given in \citet{EM05AS}. Note that restricting to
deterministic bandwidth sequences would \emph{not} help us to obtain
better rates of convergence than are given in \thmref{rates} below.
\end{rem}

Assumptions~\ref{ass:reg}--\ref{ass:estim} are maintained throughout
the paper, even when no explicit reference is made to them. We shall
treat the parameters (including $H$ and $\alpha$) describing the
data generating mechanism as `fixed', ignoring the dependence of
any constants on these.

\subsection{Asymptotic behaviour of the regressor process\label{sub:asympreg}}

Before proceeding to an account of our main results, we describe the
limiting behaviour of the standardised regressor process $X_{n}(r)\defeq d_{n}^{-1}x_{\smlfloor{nr}}$
that is entailed by our assumptions, and which is fundamental to our
results. (The required norming sequence $\{d_{n}\}$ is given in \eqref{dkek}
below.)

Part~\enuref{iidseq} of \assref{reg} implies that there exists
a slowly varying sequence $\{\varrho_{k}\}$ such that
\begin{equation}
\frac{1}{n^{1/\alpha}\varrho_{n}}\sum_{t=1}^{\smlfloor{nr}}\epsilon_{t}\fdd Z_{\alpha}(r)\label{eq:stablecvg}
\end{equation}
where $Z_{\alpha}$ denotes an \emph{$\alpha$-stable Lvy motion}
on $\reals$, with $Z_{\alpha}(0)=0$. That is, the increments of
$Z_{\alpha}$ are stationary, and for any $r_{1}<r_{2}$ the characteristic
function of $Z_{\alpha}(r_{2})-Z_{\alpha}(r_{1})$ has the logarithm
\begin{equation}
-(r_{2}-r_{1})c\smlabs{\lambda}^{\alpha}\left[1-\i\beta\tan\left(\frac{\pi\alpha}{2}\right)\right]\label{eq:incrcf}
\end{equation}
where $\beta\in[-1,1]$ and $c>0$; following \citet[p.~1773]{Jeg04},
we impose the further restriction that $\beta=0$ when $\alpha=1$.
We shall also require that $\{\varrho_{k}\}$ be chosen such that
$c=1$ here, which provides a convenient normalisation for the scale
of $Z_{\alpha}$. (Thus when $\alpha=2$, $Z_{\alpha}$ corresponds
to a Brownian motion with variance $2$.) Let $X$ denote the \emph{linear
fractional stable motion }(LFSM)
\begin{align}
X(r) & \defeq\int_{0}^{r}(r-s)^{H-1/\alpha}\diff Z_{\alpha}(s)\label{eq:Xdef}\\
 & \qquad\qquad+\int_{-\infty}^{0}[(r-s)^{H-1/\alpha}-(-s)^{H-1/\alpha}]\diff Z_{\alpha}(s)\nonumber 
\end{align}
with the convention that $X=Z_{\alpha}$ when $H=1/\alpha$. (See
\citealp{ST94}, for a detailed discussion of the LFSM; note that
when $\alpha=2$, $X$ is a fractional Brownian motion.) Associated
to $X$ is the occupation density (\emph{local time}) process $\loctime\defeq\{\loctime(a)\}_{a\in\reals}$,
a process which, almost surely, has continuous paths and satisfies
\begin{equation}
\int_{\reals}f(x)\loctime(x)\diff x=\int_{0}^{1}f(X(r))\diff r,\qquad\forall f\text{ bounded, measurable}.\label{eq:otf}
\end{equation}
(See Theorem~0 in \citealp{Jeg04}.)

Now let $\{c_{k}\}$ denote a sequence with $c_{0}=1$ and \setjot{2}
\begin{equation}
c_{k}=\begin{cases}
\phi & \text{if }H=1/\alpha\\
\smlabs{H-1/\alpha}^{-1}k^{H-1/\alpha}\pi_{k} & \text{otherwise.}
\end{cases}\label{eq:ck}
\end{equation}
By Karamata's theorem \citep[Thm.~1.5.11]{Bingham87}, $\sum_{l=0}^{k}\phi_{k}\sim c_{k}$
as $k\goesto\infty$.\restorejot{} Set 
\begin{align}
d_{k} & \defeq k^{1/\alpha}c_{k}\varrho_{k} & \nseq_{k} & \defeq kd_{k}^{-1},\label{eq:dkek}
\end{align}
and note that the sequences $\{c_{k}\}$, $\{d_{k}\}$ and $\{\nseq_{k}\}$
are regularly varying with indices $H-1/\alpha$, $H$ and $1-H$
respectively. The following is a special case of \XREFfclt{} and
\XREFgeneralIP{} in \citet{Duffy14uniform}.
\begin{prop}
\label{prop:generalIP} For every $f\in\BIl$, 
\begin{gather}
\loctime_{n}^{f}(a)\defeq\frac{1}{\nseq_{n}h_{n}}\sum_{t=1}^{n}f\left(\frac{x_{t}-d_{n}a}{h_{n}}\right)\wkc\loctime(a)\int_{\reals}f\label{eq:uniformlaw}
\end{gather}
on $\ell_{\ucc}(\reals)$, jointly with $X_{n}(r)\defeq d_{n}^{-1}x_{\smlfloor{nr}}\fdd X(r)$.
\end{prop}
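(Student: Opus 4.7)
The plan is to derive the claim by specialising \XREFgeneralIP{} of \citet{Duffy14uniform} to the setting of \assref{reg}. The hypotheses of that result reduce here to two items: the fdd convergence $X_n \fdd X$, which is \XREFfclt{} of the cited reference, and a uniform control on the second moments of $\loctime_n^f$, which exploits the characteristic-function integrability condition $\psi \in L^{\cfidx}$ of \assref{reg}\enuref{iidseq}. The first ingredient is obtained by applying the stable invariance principle \eqref{stablecvg} to the innovations $\{\epsilon_t\}$ and then passing through the linear filter $\phi(L)$, using summation-by-parts together with the Karamata asymptotics recorded in \eqref{ck} to identify the limit as the LFSM $X$ of \eqref{Xdef}. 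Theorem~0 of \citet{Jeg04} then delivers an almost surely continuous occupation density $\loctime$ satisfying \eqref{otf}.

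Granted these ingredients, the proof proceeds through pointwise convergence and tightness. For pointwise convergence, I would set $b_n \defeq h_n/d_n \dto 0$ and $y_t^{(n)} \defeq x_t/d_n$, so that
\begin{equation*}
\loctime_n^f(a) = \frac{1}{n}\sum_{t=1}^{n} b_n^{-1} f\!\left(\frac{y_t^{(n)}-a}{b_n}\right).
\end{equation*}
Via $X_n \fdd X$ and \eqref{otf}, this is approximately $\int_{\reals} b_n^{-1} f((x-a)/b_n) \loctime(x)\diff x$, which converges to $\loctime(a)\int_{\reals} f$ as $b_n \dto 0$ by continuity of $\loctime$ at $a$. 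Joint fdd convergence across finitely many $a$'s is analogous, and the joint convergence with $X_n \fdd X$ comes along with the argument since $\loctime_n^f$ is built as a smoothed functional of $X_n$.

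The substantive step -- and the main obstacle -- is the promotion of fdd convergence to weak convergence in $\ell_{\ucc}(\reals)$, which demands stochastic equicontinuity of $\{\loctime_n^f\}$ on compacta. The difficulty is that $\loctime$ itself is only Hölder (and not Lipschitz) continuous, while the shrinking bandwidth $h_n \dto 0$ prevents a direct Lipschitz-in-$a$ bound on $\loctime_n^f$. To circumvent this, I would approximate $f$ from above and below by one-sided Lipschitz functions $\abv f_\delta, \blw f_\delta$ with $\blw f_\delta \le f \le \abv f_\delta$ and $\int(\abv f_\delta - \blw f_\delta)$ small in $\delta$, use monotonicity to sandwich $\loctime_n^f(a)$ between $\loctime_n^{\blw f_\delta}(a)$ and $\loctime_n^{\abv f_\delta}(a)$ on a $\delta$-grid of $a$'s, and then apply a second-moment estimate for $\loctime_n^{\abv f_\delta - \blw f_\delta}$ to close the chaining. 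The characteristic-function integrability $\psi \in L^{\cfidx}$ is exactly what yields the required bound on the density of the increments $x_t - x_s$, and hence on these second moments, uniformly in $n$.

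Finally, to handle the random bandwidth $h_n \in [\blw h_n, \abv h]$ permitted by \assref{estim}(ii), I would exploit the fact that the limit $\loctime(a)\int f$ does not depend on $h_n$. A grid argument then suffices: partition $[\blw h_n, \abv h]$ with mesh just fine enough to be interpolated via the Lipschitz dependence of $h^{-1}f(\cdot/h)$ on $h$, apply the deterministic-bandwidth result on each grid value, and recombine via Slutsky. The condition $\blw h_n^{-1} \lesssim \nseq_n n^{-2\bandmax}$ ensures that the required mesh is compatible with the already-established convergence rates, and yields the stated joint weak convergence on $\ell_{\ucc}(\reals)$.
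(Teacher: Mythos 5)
Your proposal is correct and follows essentially the same route as the paper, which offers no proof of its own beyond observing that the statement is a special case of \XREFfclt{} and \XREFgeneralIP{} in \citet{Duffy14uniform} under the maintained Assumptions~\ref{ass:reg}--\ref{ass:estim}. The additional sketch you give of the companion paper's machinery (stable invariance principle plus linear filtering for $X_n\fdd X$, the occupation density via Theorem~0 of \citealp{Jeg04}, chaining with one-sided Lipschitz approximants, and the treatment of the random bandwidth) is consistent with what that reference is described as doing, but none of it is reproduced or needed in this paper.
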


\subsection{Order estimates\label{sub:order}}

Our rates of convergence will be obtained with the aid of the following
order estimates, for the covariance and zero energy processes respectively,
which appear to be new in the literature.
\begin{thm}
\label{thm:orderest}Suppose $f\in\BIl$. Then
\begin{gather}
\frac{1}{(\nseq_{n}h_{n})^{1/2}}\sup_{a\in\reals}\abs{\sum_{t=1}^{n}f\left(\frac{x_{t}-d_{n}a}{h_{n}}\right)u_{t}}\isbigop(1+n^{1/\etamom-\bandmax})\log n;\label{eq:covorderest}
\end{gather}
and if additionally $\int_{\reals}f=0$ and $\int\smlabs{f(x)x}\diff x<\infty$,
\begin{equation}
\frac{1}{(\nseq_{n}h_{n})^{1/2}}\sup_{a\in\reals}\abs{\sum_{t=1}^{n}f\left(\frac{x_{t}-d_{n}a}{h_{n}}\right)}\isbigop\log n.\label{eq:0enorderest}
\end{equation}

\end{thm}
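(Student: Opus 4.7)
Plan: Both displays will be established via the same three-stage scheme --- reduce $\sup_{a\in\reals}$ to a maximum over a polynomially-sized grid $\grid_n$ using Lipschitz continuity of $f$; derive a pointwise-in-$a$ deviation bound with Gaussian tails by a suitable martingale decomposition; and conclude via a union bound over $\grid_n$, which produces one $\log^{1/2} n$ factor.

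\emph{Grid reduction.} Since $f\in\BIl$ is Lipschitz, the shifted kernel $a\mapsto f((\cdot-d_n a)/h_n)$ is Lipschitz in $a$ with modulus $\asymp d_n/h_n$. Any bounded, Lipschitz, integrable function vanishes at infinity, so combined with the standard maximal bound $\max_{t\le n}\smlabs{x_t}\isbigop d_n n^{\epsilon}$ the sums outside $\smlabs a\leq A_n$, with $A_n$ polynomial in $n$, can be made $o_p(1)$. Using the crude bounds $\sum_{t=1}^n\smlabs{u_t}\isbigop n$ (from $\expect\smlabs{\eta_0}^{\etamom}<\infty$ with $\etamom>2$) for \eqref{covorderest}, and $n\smlnorm f_\infty$ for \eqref{0enorderest}, one controls the error of replacing $\sup_{a\in\reals}$ by a maximum over a grid of spacing $\asymp h_n/(d_n n^C)$ for large $C$; this grid has cardinality polynomial in $n$, and the final union bound over it will supply a $\log^{1/2}n$ factor.

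\emph{Pointwise bound, covariance process.} For fixed $a$, I would apply a Beveridge--Nelson decomposition $u_t = \Theta\eta_t + (\tilde u_{t-1}-\tilde u_t)$ with $\Theta\defeq\sum_k\theta_k$. After Abel summation the telescoping term becomes $\sum_t[g_{n,a}(x_{t+1})-g_{n,a}(x_t)]\tilde u_t$ plus boundary pieces; the Lipschitz increment bound $\smlabs{v_{t+1}}/h_n$ together with the strong summability $\sum_k\smlabs{\theta_k}k^{7/6}<\infty$ keeps this at the required order. For the leading term, split $\eta_t=\mu(\epsilon_t)+\xi_t$ with $\mu(\epsilon)\defeq\expect[\eta_0\mid\epsilon_0=\epsilon]$; the residual $\xi_t$ is independent across $t$ and satisfies $\expect[\xi_t\mid\epsilon_t]=0$, so $\sum_t g_{n,a}(x_t)\xi_t$ is a martingale in a filtration containing all of $\{\epsilon_s\}_{s\leq n}$. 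Truncating $\xi_t$ at the level producing the factor $n^{1/\etamom-\bandmax}$, using $\expect\smlabs{\eta_0}^{\etamom}<\infty$ together with the lower-bandwidth hypothesis $\blw h_n^{-1}\lesssim\nseq_n n^{-2\bandmax}$, Burkholder's inequality combined with the variance estimate $\sum_t\expect g_{n,a}^2(x_t)\isbigop \nseq_n h_n$ from \propref{generalIP} gives a pointwise Gaussian-tail bound of order $(\nseq_n h_n)^{1/2}(1+n^{1/\etamom-\bandmax})\log^{1/2}n$. The contribution $\sum_t g_{n,a}(x_t)\mu(\epsilon_t)$ is not itself a martingale, but is brought into martingale form by a one-step BN-type index shift to $g_{n,a}(x_{t-1})\mu(\epsilon_t)$, which is a martingale because $\epsilon_t$ is independent of $x_{t-1}$, with the replacement cost controlled by the Lipschitz increment.

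\emph{Pointwise bound, zero-energy process, and main obstacle.} Here the cancellation $\int f=0$ is exploited by setting $F(x)\defeq\int_{-\infty}^x f$, which is bounded, Lipschitz, and --- using the hypothesis $\int\smlabs{f(x)x}\diff x<\infty$ --- integrable. One then rewrites $\sum_t f((x_t-d_n a)/h_n)$ as a telescoping difference of $F$-evaluations along $\{x_t\}$ plus a controlled error, reducing it to a martingale-difference sum to which Burkholder applies at rate $(\nseq_n h_n)^{1/2}$; the union bound over $\grid_n$ then produces the $\log n$. No $u_t$ enters, which is why the truncation factor is absent from \eqref{0enorderest}. The hardest step is the endogenous piece $\sum_t g_{n,a}(x_t)\mu(\epsilon_t)$ in the covariance case: because the summand depends on the contemporaneous shock $\epsilon_t$ that also drives $x_t$, the BN index-shift together with the Lipschitz approximation must be used carefully to show that endogeneity costs at most an additional $\log^{1/2}n$ factor relative to the exogenous rate of \citet{CW13mimeo} --- which is exactly the quantitative claim highlighted at the end of the introduction.
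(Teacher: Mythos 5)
Your outer architecture (truncate the $\eta$'s at $n^{1/\etamom}$, reduce the supremum to a polynomially-sized grid via the Lipschitz property and decay of $f$, then a maximal bound plus union bound) matches the paper's Section~5, but the core of the argument is where the proposal fails. The Beveridge--Nelson decomposition and the one-step index shift do not deliver the required order for \emph{kernel-weighted} sums of an integrated process. Consider the Abel-summation remainder $\sum_{t}[g_{n,a}(x_{t+1})-g_{n,a}(x_{t})]\tilde{u}_{t}$ and the replacement cost $\sum_{t}[g_{n,a}(x_{t})-g_{n,a}(x_{t-1})]\mu(\epsilon_{t})$: each increment is of size $1\pmin(\smlabs{v_{t+1}}/h_{n})$ on an event of probability $\asymp(h_{n}+\expect\smlabs{v_{0}})/d_{t}$, so a term-by-term bound gives order $\sum_{t}d_{t}^{-1}\asymp\nseq_{n}$, which dwarfs the target $(\nseq_{n}h_{n})^{1/2}\log n$; and even after centring these remainders into martingales, their quadratic variation is of order $\nseq_{n}/h_{n}$ rather than $\nseq_{n}h_{n}$, because the kernel localises at scale $h_{n}$ while one-step increments of $x_{t}$ are $O_{p}(1)$ --- you lose a factor $h_{n}^{-1}$. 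This is precisely why the paper does not use a BN-type argument: it instead expands $\sum_{t}f(x_{t})\eta_{t-m}$ across \emph{all} lags $k$ via the conditional-expectation projections $\mg_{mkt}f=\expect_{t}f(x_{t+k})\eta_{t+k-m}-\expect_{t-1}f(x_{t+k})\eta_{t+k-m}$ in \eqref{Snmdecomp}, and controls the $k$th martingale piece through Fourier inversion and characteristic-function bounds (\lemref{fourinv}, \lemref{covbnd}, \lemref{1stmom}), which give decay in $k$ (of type $d_{k}^{-1}$, $c_{m}d_{k}^{-2}$, $\e^{-\gamma_{1}k}$) that is summable by \lemref{summable}. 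Nothing in your scheme produces this decay across lags, and without it the endogenous piece cannot be brought to martingale form at the right scale.

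Two further gaps. First, Burkholder's inequality gives $L^{p}$ moment bounds, not Gaussian tails; with only $\expect\smlabs{\eta_{0}}^{\etamom}<\infty$, $\etamom>2$, a union bound over a polynomial grid cannot extract a $\log^{1/2}n$ (or even $\log n$) factor from Burkholder. One needs an exponential martingale inequality together with almost-sure (Orlicz-norm) control of \emph{both} $\smlqv M$ and $\smlcv M$, which is what the truncation at $n^{1/\etamom}$, \lemref{mgbnd}, \lemref{fundamental} and the Bercu--Touati inequality (\lemref{mgmax}, used in \propref{covtruncord}) supply; an $O_{p}(\nseq_{n}h_{n})$ statement for $\sum_{t}g_{n,a}^{2}(x_{t})$ from \propref{generalIP} is not sufficient for such tails. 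Second, for \eqref{0enorderest} the antiderivative telescoping is not worked out (it requires dividing by the increments $v_{t+1}$ and controlling second-order errors, neither of which is addressed); in the paper the cancellation from $\int f=0$ is exploited in the frequency domain, via $\smlabs{\hat{f}(\lambda)}\lesssim\smlabs{\lambda}$ (finiteness of $\smlalph f1$, guaranteed by $\int\smlabs{f(x)x}\diff x<\infty$), inside the same projection machinery. Finally, since $h_{n}$ is random, the supremum must also be taken over $h\in\Hspc_{n}$; the paper's grid is over $(a,h^{-1})\in\reals\times\Bspc_{n}$ jointly, whereas your reduction covers only $a$.
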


See \secref{orderest} for the proof. In both cases the assumed smoothness
of $f$ permits the supremum over $\reals$ to be effectively reduced
to a maximum over a sequence of finite sets, $\{\Fset_{n}\}$. For
the zero energy process, the requisite bound over $\Fset_{n}$ is
provided (essentially) by \XREFmaxSn{} in \citet{Duffy14uniform}.
It turns out that a counterpart of this result is available for the
covariance process, but its application requires that a truncation
be first applied to $\{u_{t}\}$. In order to state the result, let
$\eta_{t}^{(\leq)}$ denote an appropriately truncated (and centred)
version of $\eta_{t}$ (see \eqref{etatrunc} below), such that $\expect\eta_{t}^{(\leq)}=0$
and $\truncnorm{\eta}\defeq\smlnorm{\eta_{0}^{(\leq)}}_{\infty}<\infty$.
For $u_{t}^{(\leq)}\defeq\sum_{k=0}^{n}\theta_{k}\eta_{t-k}^{(\leq)}$,
define
\begin{equation}
\S_{n}f\defeq\sum_{t=1}^{n}f(x_{t})u_{t}^{(\leq)},\label{eq:Snf}
\end{equation}
and for $\Fset\subset\BI$, set
\begin{equation}
\delta_{n}(\Fset)\defeq\truncnorm{\eta}\smlnorm{\Fset}_{\infty}+[\truncnorm{\eta}+\nseq_{n}^{1/2}](\smlnorm{\Fset}_{1}+\smlnorm{\Fset}_{2})\label{eq:delnf}
\end{equation}
where $\smlnorm{\Fset}\defeq\sup_{f\in\Fset}\smlnorm f$.
\begin{prop}
\label{prop:covtruncord}Suppose $\Fset_{n}\subset\BI$ with $\#\Fset_{n}\lesssim n^{C}$.
Then
\[
\max_{f\in\Fset_{n}}\smlabs{\S_{n}f}\isbigop\delta_{n}(\Fset_{n})\log n.
\]

\end{prop}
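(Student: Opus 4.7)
The plan is to derive an exponential concentration bound for $\S_n f$ at each fixed $f$ and then close with a union bound over the polynomially-sized family $\Fset_n$; the truncation built into $\eta_t^{(\leq)}$ is precisely what makes a Bernstein-type inequality applicable.

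First I would re-express, by interchanging the order of summation,
\[
\S_n f = \sum_{t=1}^n f(x_t)\sum_{k=0}^n \theta_k \eta_{t-k}^{(\leq)} = \sum_{s} \eta_s^{(\leq)}\mg_s(f),
\]
with $\mg_s(f) \defeq \sum_k \theta_k f(x_{s+k})$, where $k\in\{0,\ldots,n\}$ is further restricted so that $s+k\in\{1,\ldots,n\}$. This casts $\S_n f$ as a linear functional of the bounded innovation sequence $\{\eta_s^{(\leq)}\}$ with coefficients $\mg_s(f)$ measurable with respect to $\sigfieldg \defeq \sigma(\{\epsilon_t\}_{t\in\integers})$. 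By the i.i.d.\ structure of $\{(\epsilon_t,\eta_t)\}$, the $\eta_s^{(\leq)}$ are conditionally independent across $s$ given $\sigfieldg$ and remain bounded by $\truncnorm{\eta}$.

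Next I would apply Bernstein's inequality conditional on $\sigfieldg$ (after arranging the truncation in \eqref{etatrunc} so that the conditional mean $\expect[\eta_s^{(\leq)}\mid\sigfieldg]$ either vanishes or is split off as a separate deterministic bias term), to obtain
\[
\Prob(|\S_n f|>t\mid\sigfieldg) \leq 2\exp\!\left(-\frac{t^2/2}{\truncnorm{\eta}^2\sum_s \mg_s(f)^2 \,+\, \tfrac{1}{3}\truncnorm{\eta}\,t\,\max_s|\mg_s(f)|}\right).
\]
The Bernstein constants are then controlled by the function-norms of $f$: Cauchy-Schwarz and resummation give $\sum_s \mg_s(f)^2 \leq \|\theta\|_1^2 \sum_t f^2(x_t)$ and $\max_s|\mg_s(f)| \leq \|\theta\|_1 \|f\|_\infty$, while a signal-bound of the form $\sum_t f^2(x_t) \lesssim \nseq_n \|f\|_2^2$, valid simultaneously over $f\in\Fset_n$ on a high-probability event (via \propref{generalIP} and its precursors, combined with $\#\Fset_n\lesssim n^C$), takes care of the remaining random piece. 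A union bound then gives
\[
\Prob\!\left(\max_{f\in\Fset_n}|\S_n f|>t\right) \lesssim n^C \sup_{f\in\Fset_n}\expect\!\left[\Prob(|\S_n f|>t\mid\sigfieldg)\right] + o(1),
\]
and the choice $t=C'\delta_n(\Fset_n)\log n$ with $C'$ sufficiently large drives the right-hand side to zero.

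The main obstacle, I expect, is handling any residual conditional bias arising from endogeneity. If \eqref{etatrunc} delivers only $\expect\eta_s^{(\leq)}=0$ rather than $\expect[\eta_s^{(\leq)}\mid\epsilon_s]=0$, then $\S_n f$ must be decomposed into a conditionally mean-zero piece (handled by the Bernstein argument above) and a bias sum $\sum_s b_s(\epsilon_s)\mg_s(f)$, where $b_s(\epsilon_s)\defeq\expect[\eta_s^{(\leq)}\mid\epsilon_s]$ is bounded and (unconditionally) mean-zero. Bounding this residual — a weighted sum of deterministic mean-zero functions of $\{\epsilon_s\}$ with $f$-dependent coefficients — would require a zero-energy-type maximal inequality akin to \eqref{0enorderest}, and is the natural source of the $\nseq_n^{1/2}(\|\Fset\|_1+\|\Fset\|_2)$ contribution to $\delta_n(\Fset)$. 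This component is precisely the `endogeneity penalty'; under exogeneity $b_s\equiv 0$, and the $\|\Fset\|_1$ term would disappear.
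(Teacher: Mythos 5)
Your route---conditioning on the whole innovation path $\sigma(\{\epsilon_t\}_{t\in\integers})$, exploiting the conditional independence of the $\eta_s^{(\leq)}$, and closing with conditional Bernstein plus a union bound---is genuinely different from the paper's proof (which writes $\S_nf=\sum_m\theta_m\S_{nm}f$, splits each $\S_{nm}f$ into a predictable piece $\N_{nm}f$ and martingales $\M_{nmk}f$, and bounds these via the Fourier-inversion estimates of \lemref{covbnd} together with the maximal inequality of \lemref{mgmax}). The conditional-independence observation is correct, but as it stands the argument has two genuine gaps. The first is the deferred bias term $\sum_s b(\epsilon_s)\mg_s(f)$ with $b(\epsilon_s)\defeq\expect[\eta_s^{(\leq)}\mid\epsilon_s]$, which is nonzero precisely because of endogeneity and because \eqref{etatrunc} centres only unconditionally. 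This is \emph{not} a zero-energy object and cannot be dispatched by \eqref{0enorderest}: that estimate has no random multiplier and requires $\int f=0$, whereas here $\int f\neq0$ in general and the multiplier $w_t\defeq\sum_m\theta_m b(\epsilon_{t-m})$ is mean-zero, weakly dependent, cross-correlated with the increments of $x_t$, and not a martingale difference. In other words $\sum_t f(x_t)w_t$ is a covariance process of exactly the kind \propref{covtruncord} is about (compare the paper's remark that disturbances of the form $\sum_k\vartheta_k g(\epsilon_{t-k})$ are handled by re-running the same theorems, not by the zero-energy bound); the cancellation producing the $\nseq_n^{1/2}\smlnorm{\Fset}_1$ contribution to $\delta_n$ comes from a martingale decomposition in $t$ combined with bounds on $\expect[\,\cdot\,\e^{-\i\lambda x_{t+1,t+k,t+k}^{\prime}}]$, so deferring this term defers essentially the entire difficulty of the proposition.

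The second gap is quantitative: your Bernstein variance proxy $\truncnorm{\eta}^2\sum_s\mg_s^2(f)\isbigop\truncnorm{\eta}^2\nseq_n\smlnorm f_2^2$ is too crude. To survive the union bound over $\#\Fset_n\lesssim n^C$ functions it forces a threshold of order $\truncnorm{\eta}\,\nseq_n^{1/2}\smlnorm f_2\log^{1/2}n$, whereas $\delta_n(\Fset_n)\log n$ contains only $(\truncnorm{\eta}+\nseq_n^{1/2})\smlnorm f_2\log n$---a sum, not a product. Since $\truncnorm{\eta}$ is of order $n^{1/\etamom}$ for unbounded $\eta$ and $\nseq_n^{1/2}$ is regularly varying with positive index, the ratio $\min(\truncnorm{\eta},\nseq_n^{1/2})/\log^{1/2}n$ diverges, so the bound you would obtain is polynomially weaker than the one claimed. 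Repairing this requires using the true conditional variance $\sum_s\var(\eta_s^{(\leq)}\mid\epsilon_s)\,\mg_s^2(f)$, which is itself a randomly weighted signal functional needing its own uniform-in-$\Fset_n$ control; the careful bookkeeping of where the almost-sure bound $\truncnorm{\eta}$ may be used and where only second moments may is exactly what the case-by-case bounds $\sigma_{nmk}^2(f)$ in \eqref{signkdef}, \lemref{mgbnd} and \lemref{dnsum} accomplish in the paper's proof.
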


The proof of this result appears in \secref{covproof}.
\begin{rem}
\label{rem:exogcase}If $\{u_{t}\}$ is an exogenous m.d.s., then
$\{f(x_{t})u_{t}\}$ itself forms a m.d.s., and an application of
Freedman's \citeyearpar[Thm.~1.6]{Freedman75AP} inequality permits
the $\log n$ factor on the right side of \eqref{covorderest} to
be reduced to $\log^{1/2}n$; see \citet[Thm.~2.1]{WC14Bern}. In
the present case, however, $\{f(x_{t})u_{t}\}$ is not a m.d.s., precluding
a direct application of such a result. Instead, we shall rely on the
combination of a suitable subgaussian tail inequality for martingales
\citep[Thm.~2.1]{BT08AAP} -- which implies \lemref{mgmax} below
-- and a martingale decomposition of $\sum_{t=1}^{n}f(x_{t})u_{t}$.
\end{rem}

\subsection{Rates of uniform convergence\label{sub:rates}}

The essential features of the problem become apparent when we consider
the Nadaraya-Watson estimator, which admits the decomposition
\begin{align}
\hat{m}(x)-m_{0}(x) & =\frac{\sum_{t=1}^{n}K_{h_{n}}(x_{t}-x)[m_{0}(x_{t})-m_{0}(x)]}{\sum_{t=1}^{n}K_{h_{n}}(x_{t}-x)}+\frac{\sum_{t=1}^{n}K_{h_{n}}(x_{t}-x)u_{t}}{\sum_{t=1}^{n}K_{h_{n}}(x_{t}-x)}\nonumber \\
 & \eqdef\frac{\objlabel{\Psi_{1n}}{bias}(x)}{\objlabel{\Psi_{3n}}{denom}(x)}+\frac{\objlabel{\Psi_{2n}}{variance}(x)}{\objref{denom}(x)},\label{eq:decmp}
\end{align}
where $K_{h}(y)\defeq h^{-1}K(h^{-1}y)$. We shall now examine each
of $\objref{bias}$, $\objref{variance}$ and $\objref{denom}$, in
turn: compared with stationary regressor case, the treatment of the
denominator poses some unique challenges here, and so we turn to it
first.

\paragraph*{Denominator}

Set $\locest(a)\defeq\nseq_{n}^{-1}\sum_{t=1}^{n}K_{h_{n}}(x_{t}-d_{n}a)$,
and define
\begin{equation}
A_{n}^{\varepsilon}\defeq\{x\in\reals\mid\locest(d_{n}^{-1}x)\geq\varepsilon\}.\label{eq:An}
\end{equation}
Noting the different standardisations of $\locest$ and $\objref{denom}$,
we see that
\begin{equation}
\sup_{x\in A_{n}^{\varepsilon}}\objref{denom}^{-1}(x)=\Bigl(\inf_{x\in A_{n}^{\varepsilon}}\objref{denom}(x)\Bigr)^{-1}\leq\varepsilon^{-1}e_{n}^{-1}.\label{eq:denomord}
\end{equation}
Thus $A_{n}^{\varepsilon}$ describes a subset of $\reals$ -- which
depends on the trajectory of $\{x_{t}\}_{t=1}^{n}$ -- on which the
order of $\objref{denom}^{-1}(x)$ may be uniformly controlled. Importantly,
$\varepsilon>0$ may be chosen (sufficiently small) such that $A_{n}^{\varepsilon}$
contains as large a fraction of the sample as desired, in the limit
as $n\goesto\infty$, in the sense that
\begin{equation}
\limsup_{n\goesto\infty}\Prob\left\{ \frac{1}{n}\sum_{t=1}^{n}\indic\{x_{t}\notin A_{n}^{\varepsilon}\}\geq\delta\right\} \leq\delta\label{eq:mostsupp}
\end{equation}
for any given $\delta>0$: see the arguments used to verify \XREFmostsupp{}
in \citet{Duffy14uniform}.

In general, $A_{n}^{\varepsilon}$ will be a union of disjoint (closed)
intervals, even for large $n$. This is necessarily the case when
$H<1/\alpha$, since in this case $X$ has discontinuous sample paths
(\citealp{ST94}, Example~10.2.5), and so the support of $\loctime$,
which is contained in the range of $X$, will typically contain gaps.
However, in the special case where $\expect\epsilon_{0}^{2}<\infty$
(implying $\alpha=2$) and $H=\tfrac{1}{2}$, it is possible to replace
$A_{n}^{\varepsilon}$ by a sequence of (connected) intervals. In
order to state a result to this effect, let $R_{n}^{\varepsilon}\defeq[x_{(1)},x_{(n)}]_{\varepsilon}$,
where $[a,b]_{\varepsilon}\defeq[(1-\varepsilon)a,(1-\varepsilon)b]$
and let $\{x_{(i)}\}_{i=1}^{n}$ denote the order statistics of the
sample $\{x_{t}\}_{t=1}^{n}$.
\begin{prop}
\label{prop:Raydenom}Suppose $H=\tfrac{1}{2}$ and $\expect\epsilon_{0}^{2}<\infty$,
and let $\delta>0$ be given. Then for every $\varepsilon>0$, \eqref{denomord}
holds with $R_{n}^{\varepsilon}$ in place of $A_{n}^{\varepsilon}$;
and for $\varepsilon>0$ sufficiently small, \eqref{mostsupp} holds
with $R_{n}^{\varepsilon}$ in place of $A_{n}^{\varepsilon}$.
\end{prop}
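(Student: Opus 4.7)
The strategy exploits the fact that when $\alpha = 2$ and $H = 1/2$, the limit $X$ is a scaled Brownian motion with $X(0) = 0$: its range $I_X \defeq [\min_{r \in [0,1]} X(r), \max_{r \in [0,1]} X(r)]$ almost surely contains $0$ in its interior, and the associated local time $L$ is continuous with support $I_X$ and $L > 0$ on $\intr(I_X)$. By Donsker's theorem $X_n \wkc X$ in $C[0,1]$, which combined with \propref{generalIP} (taking $f = K$) yields joint weak convergence $(X_n, \locest) \wkc (X, L)$. By Skorokhod's representation theorem, one may pass to a common probability space on which these convergences hold almost surely, and I would work there throughout.

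For part (i), write $d_n^{-1} R_n^\varepsilon = (1-\varepsilon) I_n$, with $I_n \defeq [\min_{t \leq n} X_n(t/n), \max_{t \leq n} X_n(t/n)]$. Uniform convergence $X_n \to X$ gives $I_n \to I_X$ in Hausdorff distance, so $(1-\varepsilon) I_n$ eventually lies in any chosen compact neighbourhood $K \subset \intr(I_X)$ of $(1-\varepsilon) I_X$. Given $\delta > 0$, choose such a $K$ and an $\varepsilon^\ast > 0$ with $\Prob\{\min_{a \in K} L(a) \geq 2\varepsilon^\ast\} \geq 1 - \delta$; this is possible because $\min_K L > 0$ a.s.\ by continuity and strict positivity of $L$ on $\intr(I_X)$. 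On this event, uniform convergence $\locest \to L$ on $K$ yields $\inf_{a \in (1-\varepsilon) I_n} \locest(a) \geq \varepsilon^\ast$ for $n$ large, i.e.\ $R_n^\varepsilon \subset A_n^{\varepsilon^\ast}$; \eqref{denomord} applied to $A_n^{\varepsilon^\ast}$ then delivers the claimed bound, with the implicit constant depending on $(\varepsilon, \delta)$.

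For part (ii), pointwise-in-$r$ convergence of $\indic\{X_n(\smlceil{nr}/n) \in (1-\varepsilon) I_n\}$ to $\indic\{X(r) \in (1-\varepsilon) I_X\}$ holds for every $r$ with $X(r) \notin \partial((1-\varepsilon) I_X)$, and for Brownian motion the exceptional set $\{r : X(r) \in \{(1-\varepsilon) \min X, (1-\varepsilon) \max X\}\}$ has Lebesgue measure zero. Dominated convergence then gives
\[
\frac{1}{n} \sum_{t=1}^n \indic\{x_t \notin R_n^\varepsilon\} \longrightarrow \int_0^1 \indic\{X(r) \notin (1-\varepsilon) I_X\} \diff r = \int_{I_X \setminus (1-\varepsilon) I_X} L(y) \diff y
\]
a.s., with the equality supplied by the occupation time formula \eqref{otf}. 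Since $L \in L^1(\reals)$ is supported on the compact set $I_X$, a second application of dominated convergence shows the right-hand side tends to $0$ a.s.\ as $\varepsilon \dto 0$; hence for $\varepsilon$ small enough the integral is below $\delta$ with probability $\geq 1 - \delta$, yielding \eqref{mostsupp} for $R_n^\varepsilon$.

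The principal difficulty is the sample dependence of $R_n^\varepsilon$: the indicator in part (ii) is a discontinuous functional of the (random) interval $I_n$, so one cannot directly invoke the continuous mapping theorem. This is resolved by the observation that the boundary of the random limit $(1-\varepsilon) I_X$ is essentially negligible under the occupation measure of $X$ (Brownian motion spending zero Lebesgue time at any fixed level), which permits pointwise $a.s.$ convergence of the indicators off a null set and hence passage to the limit.
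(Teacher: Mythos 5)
Your argument is correct, and it rests on the same three pillars as the paper's proof: (a) strict positivity of Brownian local time on the interior of the range of $X$ -- this is precisely Ray's theorem, which you assert without attribution but which does require a citation (the paper uses \citealp{Ray1963IJM}, via Karatzas and Shreve); (b) the functional CLT for $X_{n}$ plus the uniform convergence $\locest\wkc\loctime$, applied jointly; and (c) the occupation-time formula together with the negligibility of the time spent near the extremes as $\varepsilon\dto0$. Where you differ is in execution. The paper stays at the level of weak convergence: for the first claim it applies the continuous mapping theorem directly to $\inf_{a\in[\minpr_{n},\maxpr_{n}]_{\varepsilon}}\locest(a)$, and for the second it sandwiches the discontinuous indicator between continuous majorants $g_{k}$ and passes to the limit in $n$, then $k$, then $\varepsilon$. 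You instead pass to an almost-sure (Skorokhod-type) representation and argue pathwise, disposing of the discontinuity of the indicator by noting that the occupation measure of $X$ is a.s.\ absolutely continuous, so the boundary levels $(1-\varepsilon)\min X$, $(1-\varepsilon)\max X$ carry zero occupation time; your part (i) moreover yields the slightly sharper statement that $R_{n}^{\varepsilon}\subset A_{n}^{\varepsilon^{\ast}}$ with probability at least $1-\delta$, which reduces the claim to the deterministic bound \eqref{denomord} already available for $A_{n}^{\varepsilon^{\ast}}$ -- a tidy reduction the paper does not make explicit. Two small repairs: the FCLT for $X_{n}$ is not Donsker's theorem (the increments $v_{t}$ form a linear process, and $X_{n}$ is a step process, so the convergence is in $\ell_{\infty}[0,1]$ rather than $C[0,1]$; cite a linear-process invariance principle such as \citealp{Hannan1979SP}, as the paper does); and since $\ell_{\infty}$-valued weak convergence is in the Hoffmann--J{\o}rgensen sense, the almost-sure representation must be the version of \citet[Thm.~1.10.3/1.10.4]{VVW96}, which the paper itself invokes elsewhere, so this is a citation rather than a substantive gap.
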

 
\begin{rem}
This result is a essentially a consequence \citeauthor{Ray1963IJM}'s
\citeyearpar{Ray1963IJM} theorem, which implies that the local time
$\loctime$ of a diffusion $J$ is strictly positive on the interior
of the range of $J$. In the setting of the present paper, this seems
to be applicable only in the case where $X$ is a Brownian motion.
However, it also applies in the important case where
\[
X(r)=\int_{0}^{r}\e^{\locp(r-s)}\diff B(s)
\]
for $\locp\in\reals$ and $B$ a Brownian motion. Such a process arises
as the weak limit of $X_{n}$, under the hypotheses of \propref{Raydenom},
if $\{x_{t}\}$ is generated according to $x_{t}=x_{t-1}+\rho_{n}v_{t}$,
where $\rho_{n}=1+\frac{\locp}{n}$: see \citet{WP09Ecta}. (To extend
our results to this case would require very little modification to
our arguments; indeed, we explicitly considered such a data generating
mechanism in an earlier version of this paper.) Whether such a characterisation
of the support of $\loctime$ is available in other cases where $X$
has continuous sample paths, most notably when $\alpha=2$ and $H\neq1/2$
-- i.e.\ when $X$ is a fractional Brownian motion -- seems to be
an open question.
\end{rem}
 
\begin{rem}
\label{rem:detintvl}In view of \eqref{mostsupp}, the volumes of
both $A_{n}^{\varepsilon}$ and $R_{n}^{\varepsilon}$ must expand,
probabilistically, at rate $d_{n}$ as $n\goesto\infty$. However,
even under the hypotheses of \propref{Raydenom}, $A_{n}^{\varepsilon}$
could \emph{not} be replaced by a sequence of \emph{deterministic}
intervals $[-a_{n},a_{n}]$ whose endpoints diverge at rate $d_{n}$.
Indeed, suppose that $a_{n}=C_{0}n^{1/2}$ for some $C_{0}>0$. Then
by $X_{n}\wkc X$ and the reflection principle \citep[Prop.~III.3.7]{RY99book},
\begin{align}
\Prob\left\{ \max_{1\leq t\leq n}x_{t}\leq\frac{C_{0}n^{1/2}}{2}\right\} \goesto\Prob\left\{ \sup_{r\in[0,1]}X(r)\leq\frac{C_{0}}{2}\right\}  & =1-2\Prob\left\{ X(1)>\frac{C_{0}}{2}\right\} =2\Gauss\left(\frac{C_{0}}{2}\right)-1>0\label{eq:boundedocc}
\end{align}
for every $C_{0}>0$, no matter how small; here $\Gauss$ denotes
the standard normal c.d.f. With nonzero probability, $\{x_{t}\}_{t=1}^{n}$
\emph{never} visits $[\frac{1}{2}C_{0}n^{1/2},C_{0}n^{1/2}],$ and
so the signal is forever negligible within this range. This accounts
for why earlier work on this problem (e.g.\ \citealp{WW13ET}; \citealp{CW13mimeo};
and \citealp{GLT13mimeo}), which considered deterministic intervals
of this form, has been restricted to domains whose volume grows at
a rate strictly slower than $d_{n}$, which necessarily contain a
vanishingly small fraction of the observed $\{x_{t}\}_{t=1}^{n}$
as $n\goesto\infty$.
\end{rem}

\paragraph*{Numerator}

To provide a measure of the `regularity' of $m_{0}$ over a given
domain, we associate to $m_{0}$ the mappings $\derivbnd 1,\derivbnd 2:\pset(\reals)\setmap\reals_{+}\union\{\infty\}$,
defined by
\begin{align}
\derivbnd 1(A) & \defeq\sup_{x\in A}\smlabs{m_{0}^{\prime}(x)} & \derivbnd 2(A) & \defeq\sup_{x\in A}\smlabs{m_{0}^{\prime\prime}(x)}.\label{eq:mregularity}
\end{align}
Let $\tilde{A}_{n}^{\varepsilon}\defeq\{x\in\reals\mid d(x,A_{n}^{\varepsilon})\leq c_{K}h\}$,
where $c_{K}$ is chosen such that the support of $K$ is contained
in $[-c_{K},c_{K}]$. Then a Taylor series expansion of $m_{0}$ around
$x_{t}$, for each $t\in\{1,\ldots,n\}$, yields the estimate
\begin{align}
\sup_{x\in A_{n}^{\varepsilon}}\smlabs{\objref{bias}} & \leq h_{n}\derivbnd 1(\tilde{A}_{n}^{\varepsilon})\sup_{x\in\reals}\abs{\sum_{t=1}^{n}K_{h_{n}}^{[1]}(x_{t}-x)}+h_{n}^{2}\derivbnd 2(\tilde{A}_{n}^{\varepsilon})\sup_{x\in\reals}\sum_{t=1}^{n}\smlabs{K_{h_{n}}^{[2]}(x_{t}-x)},\label{eq:psi1bnd}
\end{align}
where $f^{[p]}$ denotes $x\elmap x^{p}f(x)$. Applying \thmref{orderest}
and \propref{generalIP} to the first and second terms on the right,
respectively, then gives
\begin{equation}
\frac{1}{\nseq_{n}}\sup_{x\in A_{n}^{\varepsilon}}\smlabs{\objref{bias}}\isbigop h_{n}^{2}\left[\derivbnd 1(\tilde{A}_{n}^{\varepsilon})\frac{\log n}{h_{n}^{3/2}\nseq_{n}^{1/2}}+\derivbnd 2(\tilde{A}_{n}^{\varepsilon})\right].\label{eq:biasorder}
\end{equation}
Similarly, we have by \thmref{orderest} that 
\begin{equation}
\frac{1}{\nseq_{n}}\sup_{x\in A_{n}^{\varepsilon}}\smlabs{\objref{variance}}\isbigop(1+n^{1/\etamom-\bandmax})\frac{\log n}{(\nseq_{n}h_{n})^{1/2}}.\label{eq:varorder}
\end{equation}

In view of \eqref{denomord}, the rate at which $\hat{m}(x)$ converges
uniformly to $m_{0}(x)$ on $A_{n}^{\varepsilon}$ is given by the
sum of the right sides of \eqref{biasorder} and \eqref{varorder}.
In giving a formal statement of our results below, we assume that
$\etamom$ and $\bandmax$ are such that $n^{1/\etamom-\bandmax}\lesssim1$,
so that the right side of \eqref{varorder} takes a simplified form.
\begin{thm}
\label{thm:rates}Suppose $\bandmax\geq\etamom^{-1}$. Then for every
$\varepsilon>0$,
\begin{align}
\varepsilon\cdot\sup_{x\in A_{n}^{\varepsilon}}\smlabs{\hat{m}(x)-m_{0}(x)}\isbigop h_{n}^{2}\left[\derivbnd 1(\tilde{A}_{n}^{\varepsilon})\frac{\log n}{h_{n}^{3/2}\nseq_{n}^{1/2}}+\derivbnd 2(\tilde{A}_{n}^{\varepsilon})\right]+\frac{\log n}{(\nseq_{n}h_{n})^{1/2}}\label{eq:ndrate}
\end{align}
and
\begin{equation}
\varepsilon\cdot\sup_{x\in A_{n}^{\varepsilon}}\smlabs{\hat{m}_{L}(x)-m_{0}(x)}\isbigop h_{n}^{2}\derivbnd 2(\tilde{A}_{n}^{\varepsilon})+\frac{\log n}{(\nseq_{n}h_{n})^{1/2}}\label{eq:linrate}
\end{equation}

\end{thm}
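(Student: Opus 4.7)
}

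For the Nadaraya-Watson case the argument is essentially assembly. Starting from the decomposition \eqref{decmp}, I combine the denominator estimate \eqref{denomord} with the bias estimate \eqref{biasorder} and the variance estimate \eqref{varorder}: since $\sup_{x\in A_n^\varepsilon}\objref{denom}^{-1}(x) \leq \varepsilon^{-1}\nseq_n^{-1}$, I obtain
\[
\varepsilon \cdot \sup_{x\in A_n^\varepsilon}\smlabs{\hat m(x)-m_0(x)} \leq \nseq_n^{-1}\sup_{x\in A_n^\varepsilon}\smlabs{\objref{bias}} + \nseq_n^{-1}\sup_{x\in A_n^\varepsilon}\smlabs{\objref{variance}}.
\]
The first bracket is handled by \eqref{biasorder} and the second by \eqref{varorder}; under the hypothesis $\bandmax \geq \etamom^{-1}$ one has $1+n^{1/\etamom-\bandmax}\lesssim 1$, and \eqref{ndrate} drops out. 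Nothing else is required for the local level estimator.

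For the local linear estimator the key is to exploit the moment cancellation built into its construction. Writing $T_{n,j}(x) \defeq \sum_{t=1}^{n}K_{h_n}(x_t-x)(x_t-x)^j$, $j=0,1,2$, I would represent
\[
\hat m_L(x)-m_0(x) = \frac{T_{n,2}(x)[\objref{bias}(x)+\objref{variance}(x)] - T_{n,1}(x)[\widetilde\Psi_{1n}(x)+\widetilde\Psi_{2n}(x)]}{T_{n,0}(x)T_{n,2}(x)-T_{n,1}(x)^2},
\]
where $\widetilde\Psi_{1n},\widetilde\Psi_{2n}$ are the analogues of $\objref{bias},\objref{variance}$ with kernel $K_{h_n}(\cdot)$ replaced by $(x_t-x)K_{h_n}(x_t-x)$. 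Expanding $m_0(x_t) = m_0(x) + m_0'(x)(x_t-x) + \tfrac12 m_0''(\xi_t)(x_t-x)^2$ and substituting, the $m_0'(x)$ contributions cancel identically on taking the combination $T_{n,2}\cdot(\cdot) - T_{n,1}\cdot(\cdot)$; what remains in the numerator is a second-order Taylor remainder of size $h_n^2\derivbnd 2(\tilde A_n^\varepsilon)$ multiplied by moments of $K$, together with the two variance-type terms.

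To convert these observations into an $O_p$ estimate I need a uniform lower bound on $\smlabs{T_{n,0}T_{n,2}-T_{n,1}^2}$ over $A_n^\varepsilon$. Writing $T_{n,j}(x) = h_n^j \sum_t \tilde K_j((x_t-x)/h_n)$ with $\tilde K_j(y)\defeq y^j K(y) \in \BIl$ (by the compact support and Lipschitz continuity assumed in \assref{estim}), I apply \propref{generalIP} to $\tilde K_0$ and $\tilde K_2$ to get $T_{n,0}(x)\asymp \nseq_n\loctime_n^K(d_n^{-1}x)$ and $T_{n,2}(x)\asymp h_n^2 \nseq_n \loctime_n^{\tilde K_2}(d_n^{-1}x)$; for the cross term, since $\int \tilde K_1 = \int xK(x)\diff x = 0$, the zero-energy estimate \eqref{0enorderest} of \thmref{orderest} gives $T_{n,1}(x) = O_p(h_n (\nseq_n h_n)^{1/2}\log n)$ uniformly in $x$. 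Consequently $T_{n,1}^2 = o_p(T_{n,0}T_{n,2})$, and $T_{n,0}T_{n,2}-T_{n,1}^2 \gtrsim_p \varepsilon \cdot h_n^2 \nseq_n^2$ uniformly on $A_n^\varepsilon$. Finally, using \thmref{orderest} to bound the variance-type terms $\objref{variance}$ and $\widetilde\Psi_{2n}$ (for the latter applied to $\tilde K_1\in\BIl$, which is integrable but not mean-zero, so a scalar $O_p((\nseq_n h_n)^{1/2}\log n)$ bound suffices), and combining with the $h_n^2\derivbnd 2(\tilde A_n^\varepsilon)$ Taylor remainder, yields \eqref{linrate}.

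The substantive obstacle is the local linear step: ensuring that all the pieces---the $T_{n,j}$'s, the Taylor remainder, and the variance analogues involving $(x_t-x)K_{h_n}(x_t-x)$---are simultaneously controlled \emph{uniformly} over the random set $A_n^\varepsilon$. The signal weak convergence \eqref{uniflaw} provides uniform-on-compacta control, but $A_n^\varepsilon$ itself expands probabilistically at rate $d_n$, so some care is needed to translate Proposition \propref{generalIP} into a bound valid over all of $A_n^\varepsilon$; this is the same mechanism that underlies \eqref{denomord}, and essentially the same argument should transfer.
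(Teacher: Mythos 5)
Your Nadaraya--Watson argument is exactly the paper's: \eqref{ndrate} is obtained by assembling \eqref{decmp}, \eqref{denomord}, \eqref{biasorder} and \eqref{varorder}, and nothing more is needed. For \eqref{linrate} your route is correct but organised differently from the paper's. The paper writes the local linear estimator in matrix form, normalises the design matrix by $D=\diag[1,h_{n}]$, uses \lemref{ltapprox} (itself a consequence of the zero-energy bound in \thmref{orderest}) to show $\nseq_{n}^{-1}[D^{-1}(X^{\prime}WX)D^{-1}](d_{n}a)-\major K\locest(a)=o_{p}(1)$ uniformly in $a$, and then invokes a dedicated result, \lemref{suppcvg}, which converts this into $\sup_{x\in A_{n}^{\varepsilon}}\smlnorm{[D(X^{\prime}WX)^{-1}D](x)}_{T}\isbigop\nseq_{n}^{-1}$ via an almost-sure representation and an eigenvalue bound; the bias and variance entries are then treated exactly as you do, with \propref{generalIP} and \thmref{orderest} applied to $K^{[j]}$. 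You instead invert the $2\times2$ matrix explicitly and lower-bound the determinant $T_{n,0}T_{n,2}-T_{n,1}^{2}$, using the zero-energy estimate for $T_{n,1}$ (exploiting $\int xK=0$) to show $T_{n,1}^{2}=o_{p}(T_{n,0}T_{n,2})$. This is the scalar counterpart of \lemref{suppcvg} and buys a more elementary, self-contained computation; what the paper's lemma buys is a clean treatment of the random set $\{a\mid\locest(a)\geq\varepsilon\}$ (handled there by the a.s.\ representation) and a statement valid for general $k\times k$ local polynomial fits. Your exact-cancellation observation for the $m_{0}^{\prime}(x)$ terms is the same linear-reproduction property the paper exploits through the vector $\beta(x)=(m_{0}(x),m_{0}^{\prime}(x))^{\prime}$.

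One point to make explicit: to lower-bound $T_{n,2}$ on $A_{n}^{\varepsilon}$ you need the comparison of $\frac{1}{\nseq_{n}h_{n}}\sum_{t}\tilde K_{2}((x_{t}-d_{n}a)/h_{n})$ with $\locest(a)\int y^{2}K$ \emph{uniformly over all of} $\reals$, since $A_{n}^{\varepsilon}$ is random and expands at rate $d_{n}$; weak convergence in $\ell_{\ucc}(\reals)$ from \propref{generalIP} alone does not deliver this. The required uniformity is exactly what \lemref{ltapprox} provides (apply the zero-energy part of \thmref{orderest} to $\tilde K_{2}-K\int\tilde K_{2}$), so the concern you flag at the end is resolved by the paper's own tools, but your write-up should route the $T_{n,0},T_{n,2}$ bounds through that uniform approximation rather than through uniform-on-compacta convergence.
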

 
\begin{rem}
These uniform convergence rates agree almost exactly with their pointwise
counterparts (see e.g.\ \citealp{WP11ET}), except for
\begin{enumerate}
\item the presence of the $\log n$ factors; and
\item the dependence of the bias terms on the \emph{suprema} of $m_{0}^{\prime}$
and $m_{0}^{\prime\prime}$ over $\tilde{A}_{n}^{\varepsilon}$.
\end{enumerate}
Thus, as the support of $\{x_{t}\}$ -- to which $A_{n}^{\varepsilon}$
is an approximation -- expands over the line, these `uniform' bias
terms may shrink less rapidly than their pointwise counterparts, depending
on the tail behaviour of the derivatives of $m_{0}$.
\end{rem}
 
\begin{rem}
\label{rem:exog} After the manuscript of this paper had been completed,
we obtained a copy of an unpublished manuscript by \citet{LCW14mimeo},
who determine the uniform rate of convergence of $\hat{m}_{L}$ when
$\{x_{t}\}$ is exogenous ($\{u_{t}\}$ is a heteroskedastic m.d.s.)\ and
$\{h_{n}\}$ is a deterministic sequence. (Rather than a sequence
of random domains such as $\{A_{n}^{\varepsilon}\}$, they consider
only a deterministic sequence of intervals, with the consequences
discussed in \remref{detintvl} above.) Due to the assumed exogeneity,
the $\log n$ factor appearing in the second terms on the right sides
of \eqref{ndrate} and \eqref{linrate} can be improved to $\log^{1/2}n$,
as per \remref{exogcase} above. The presence of endogeneity would
thus seem to penalise the rate of convergence of these estimators
by at worst a factor of $\log^{1/2}n$.

Underpinning these authors' derivations is an analogue of our \propref{generalIP}
(their Theorem~2.1), which is worked out under quite different assumptions
on the regressor process than are imposed here. In this regard, we
may note particularly their requirement that there exist a sequence
of processes $\{X_{n}^{\ast}\}$ with $X_{n}^{\ast}\eqdist X$, and
a $\delta>0$ such that
\begin{equation}
\sup_{r\in[0,1]}\smlabs{X_{n}(r)-X_{n}^{\ast}(r)}=o_{\as}(n^{-\delta}),\label{eq:LCWcondition}
\end{equation}
a condition which excludes a large portion of the processes considered
in this paper, for which merely $X_{n}\fdd X$ is available (this
is particularly true when $H<1/\alpha$ and $\alpha\in(0,2)$, since
in this case the sample paths of $X$ are unbounded: see \citealp{ST94},
Example~10.2.5). On the other hand, our results do \emph{not} subsume
those of \citet{LCW14mimeo}, since those authors do not require $\{v_{t}\}$
to be a linear process; there is thus only a partial overlap between
the class of processes considered in this paper, and in theirs.
\end{rem}
 
\begin{rem}
Provided that $\varepsilon>0$ is fixed, and $\derivbnd 1$ and $\derivbnd 2$
are bounded on $\reals$ -- which is perfectly consistent with linear
or sublinear growth in the tails of $m_{0}$ -- \thmref{rates} implies
that requiring convergence on a domain almost as large as the range
of $\{x_{t}\}$ does not penalise the convergence \emph{rate} of either
estimator, relative to the rate that could be proved on an interval
of fixed with. This might seem to contrast markedly with the situation
when $\{x_{t}\}$ is stationary, where necessarily slower rates of
convergence hold on domains that expand with the sample size, as the
estimator is pushed into regions where $\{x_{t}\}$ has a progressively
smaller density (see e.g.\ \citealp[Thm.~8]{Hansen08ET}; \citealp[Thm.~1]{Krist09ET};
and \citealp[Thm.~2.1]{LLL12ET}). However, this phenomenon would
re-emerge here if we were to let $\varepsilon=\varepsilon_{n}\goesto0$
as $n\goesto\infty$: indeed, it is immediate from \eqref{ndrate}
and \eqref{linrate} that the rates of uniform convergence of our
estimators, over the domains $\{A_{n}^{\varepsilon_{n}}\}$, would
be slowed by a factor of $\varepsilon_{n}^{-1}$ in this case.
\end{rem}
 
\begin{rem}
In a recent paper, \citet{CW13mimeo} argue that while both $\hat{m}$
and $\hat{m}_{L}$ enjoy similar pointwise bias properties, the latter
enjoys markedly better performance than the former, so far as the
uniform behaviour of their respective bias terms is concerned. This
conclusion is \emph{partly} borne out by \thmref{rates}; but the
improved order estimate obtained here for the linear bias term (the
first element on the right side of \eqref{ndrate}) indicates that
this judgement may need to somewhat qualified. In particular, if both
$\derivbnd 1(A_{n}^{\varepsilon})$ and $\derivbnd 2(\tilde{A}_{n}^{\varepsilon})$
are of a comparable magnitude, then $\nseq_{n}h_{n}^{3}\log^{-2}n\inprob\infty$
will ensure that the second order bias (the second term on the right
side of \eqref{ndrate}) dominates the linear bias term; this is scarcely
less restrictive than the condition that $\nseq_{n}h_{n}^{3}\inprob\infty$
that is required for this conclusion when only the \emph{pointwise}
performance of these estimators is in issue (see \citealp{WP11ET}). 
\end{rem}

See \secref{rateproof} for the proofs of \propref{Raydenom} and
\thmref{rates}.

\subsection{An alternative perspective on our results}

There is another way of viewing the estimation problem considered
in this paper, which highlights the connections between our results,
and those which are obtained when $\{x_{t}\}$ is stationary. Defining
a sequence of regression functions $m_{n}(x)\defeq m_{0}(d_{n}x)$,
the model \eqref{basicreg} can be rewritten as
\begin{equation}
y_{t}=m_{0}(x_{t})+u_{t}=m_{n}(d_{n}^{-1}x_{t})+u_{t}=m_{n}(x_{nt})+u_{t},\label{eq:transreg}
\end{equation}
where $x_{nt}\defeq d_{n}^{-1}x_{t}$. Taking $b_{n}=d_{n}^{-1}h_{n}$,
we see that 
\begin{equation}
\frac{1}{nb_{n}}\sum_{t=1}^{n}K\left(\frac{x_{nt}-x}{b_{n}}\right)=\frac{1}{\nseq_{n}h_{n}}\sum_{t=1}^{n}K\left(\frac{x_{t}-d_{n}x}{h_{n}}\right)\wkc\loctime(x)\label{eq:denscon}
\end{equation}
in $\ell_{\infty}(\reals)$ by \propref{generalIP}. In light of this,
we might regard the $\{x_{nt}\}$'s as being drawn from a spatial
distribution with marginal density $\loctime(x)$ -- just as stationary
regressors would be drawn from a distribution with marginal density
$p(x)$. (Restricting ourselves to $A_{n}^{\varepsilon}$ or $R_{n}^{\varepsilon}$
yields a domain on which the density $\loctime$ can be bounded away
from 0, which is equally desirable in the stationary regressor case.)

Now suppose that almost nothing is known about $\{x_{nt}\}$, beyond
the fact that a convergence result of the same kind as \eqref{denscon}
holds (together with some knowledge of the support of $\loctime$).
Is this sufficient to determine the rate at which the local linear
estimator of $m_{n}$, computed from $\{y_{t},x_{nt}\}_{t=1}^{n}$,
converges uniformly to $m_{n}$? If $\{u_{t}\}$ is an exogenous m.d.s.,
then this is indeed the case. Supposing 
\[
\sup_{x\in\reals}\smlabs{m_{n}^{\prime\prime}(x)}\le\alpha_{n}
\]
then as per \remref{exog}, we would have
\begin{equation}
\sup_{x\in A}\smlabs{\hat{m}_{L}(x)-m_{n}(x)}\isbigop b_{n}^{2}\alpha_{n}+\frac{\log^{1/2}n}{n^{1/2}b_{n}^{1/2}}=\alpha_{n}^{1/5}\frac{\log^{2/5}n}{n^{2/5}},\label{eq:exoglinrate}
\end{equation}
for any set $A$ on which $\loctime$ can be bounded away from zero;
the final equality follows if $b_{n}$ is chosen so as to balance
the order of bias and variance terms.

The rate of convergence thus depends only on how the `complexity'
of $m_{n}$ -- as measured here by $\alpha_{n}$ -- varies with $n$.
In the stationary setting, $m_{n}$ is (typically) a \emph{fixed}
function, and so $\alpha_{n}=\alpha_{0}$, a constant. In this case,
the right side of \eqref{exoglinrate} agrees precisely with \citeauthor{Stone1982AS}'s
\citeyearpar{Stone1982AS} minimax optimal rate for twice-continuously
differentiable functions. On the other hand, when $\{x_{t}\}$ is
integrated (or near-integrated), the manner in which $m_{n}$ is constructed
from a \emph{fixed} $m_{0}$ gives $\alpha_{n}=\alpha_{0}d_{n}^{2}$,
and the best obtainable rate is $O_{p}(\nseq_{n}^{-2/5}\log^{2/5}n)$,
as per \citet{CW13mimeo}. This leads us to believe that the minimax
optimality properties of local polynomial regression, for the estimation
of functions belonging to Hlder classes, should extend quite straightforwardly
to the case of an integrated regressor.

It remains to be seen how this analogy might be further extended to
the case where $\{x_{nt}\}$ is endogenous. Heuristically, estimation
of $m_{n}$ must be possible, in the integrated case, because the
joint dependence between $u_{t}$ and $x_{nt}$ becomes progressively
weaker, as $n,t\goesto\infty$. Although it is not immediately clear
how this notion should be made precise -- let alone what would be
a suitable analogue of it in the stationary setting -- the `location
shift' model of \citet{PS11EJ} may be counted as an important effort
in this direction.

\section{Preliminaries\label{sec:prelim}}

Preliminary to the proofs of our main results, this section collects
some auxiliary lemmas, proofs of which are given in \appref{auxproofs}
of the Supplement. We shall rely heavily on the use of the inverse
Fourier transform to analyse objects of the form $\expect_{t}f(x_{t+k})$,
similarly to \citet{BI95Stek}, \citet{Jeg04,Jeg08CDFP} and \citet{WP09Ecta,WP11ET}.
The following result permits the use of the `usual' inversion formula,
even in cases where $\hat{f}\notin L^{1}$, for $\hat{f}(\lambda)\defeq\int f(x)\e^{\i\lambda x}\diff x$.
\begin{lem}
\label{lem:fourinv} Suppose $Y=Y_{1}+Y_{2}$, where $Y_{1}$ is independent
of $(Y_{2},Z)$, and $Y_{i}$ has integrable characteristic function
$\psi_{Y_{i}}$. Then, for every $f\in\BI$, $y_{0}\in\reals$, and
$\expect\smlabs{g(Z)}<\infty$,
\begin{equation}
\expect f(y_{0}+Y)g(Z)=\frac{1}{2\pi}\int_{\reals}\hat{f}(\lambda)\e^{-\i\lambda y_{0}}\expect[\e^{-\i\lambda Y}g(Z)]\diff\lambda.\label{eq:fourinv}
\end{equation}

\end{lem}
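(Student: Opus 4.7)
The plan is to exploit the decomposition $Y = Y_1 + Y_2$ by first integrating out the ``smooth'' component $Y_1$, and then using Fubini to convert the resulting integral into the claimed Fourier-inversion formula. The key structural point is that although $\hat{f}$ need not be integrable, the product $\hat{f}(\lambda)\cdot\expect[\e^{-\i\lambda Y}g(Z)]$ \emph{is} integrable once the factorisation via independence is invoked, because $|\expect\e^{-\i\lambda Y}| = |\psi_{Y_1}(-\lambda)||\psi_{Y_2}(-\lambda)| \leq |\psi_{Y_1}(-\lambda)|$, which is integrable by hypothesis.

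First I would observe that, since $\psi_{Y_1} \in L^1$, $Y_1$ admits a bounded continuous density given by the standard Fourier inversion
\[
p_{Y_1}(y) \;=\; \frac{1}{2\pi}\int_{\reals}\psi_{Y_1}(\mu)\e^{-\i\mu y}\diff\mu.
\]
Conditioning on $(Y_2, Z)$ and using the independence of $Y_1$ from $(Y_2, Z)$ gives
\[
\expect[f(y_0+Y)g(Z)] \;=\; \expect\Bigl[g(Z)\int_{\reals} f(y_0 + y + Y_2)\,p_{Y_1}(y)\diff y\Bigr],
\]
and a change of variables $u = y_0 + y + Y_2$ reduces the inner integral to $\int f(u)\,p_{Y_1}(u - y_0 - Y_2)\diff u$.

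Next I would substitute the inversion formula for $p_{Y_1}$ and interchange the order of integration. The joint integrability needed for Fubini is straightforward: $|f(u)\psi_{Y_1}(\mu)g(Z)|$ is integrable in $(u,\mu,\omega)$ against $\diff u \cprod \diff\mu \cprod \diff\Prob$ because $f \in L^1(\reals)$, $\psi_{Y_1} \in L^1(\reals)$, and $\expect|g(Z)| < \infty$. Carrying out the $u$-integral gives $\int f(u)\e^{-\i\mu u}\diff u = \hat f(-\mu)$, and the $Y_2$-factor combines with the random phase to yield $\expect[\e^{\i\mu Y_2}g(Z)]$. After the relabelling $\lambda = -\mu$, this leaves
\[
\expect[f(y_0+Y)g(Z)] \;=\; \frac{1}{2\pi}\int_{\reals}\hat f(\lambda)\e^{-\i\lambda y_0}\,\psi_{Y_1}(-\lambda)\,\expect[\e^{-\i\lambda Y_2}g(Z)]\diff\lambda.
\]
A final appeal to the independence of $Y_1$ from $(Y_2,Z)$ collapses $\psi_{Y_1}(-\lambda)\,\expect[\e^{-\i\lambda Y_2}g(Z)]$ to $\expect[\e^{-\i\lambda Y}g(Z)]$, yielding \eqref{fourinv}.

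The only step that requires genuine care is the Fubini interchange, and the decomposition hypothesis is tailored exactly to make this work: without the factor $\psi_{Y_1}(-\lambda)$ arising from the independent smoothing component, we would have no integrable majorant in $\lambda$. Everything else is bookkeeping on characteristic-function conventions and changes of variables, so I do not anticipate any other obstacle.
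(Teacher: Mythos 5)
Your proof is correct, but it takes a different and more self-contained route than the paper. The paper first truncates $g$ to the bounded functions $g_{k}(z)=g(z)\indic\{\smlabs{g(z)}\leq k\}$, invokes (an extension of) the inversion formula already established for bounded $g$ in the companion paper \citet{Duffy14uniform}, and then passes to the limit $k\goesto\infty$: dominated convergence handles the left side, while the right side is controlled by the integrable majorant $\smlabs{\hat{f}(\lambda)\psi_{Y_{1}}(-\lambda)}$ times $\expect\smlabs{g_{k}(Z)-g(Z)}\goesto0$. You instead bypass the truncation and the external citation entirely: you use $\psi_{Y_{1}}\in L^{1}$ to write the bounded continuous density $p_{Y_{1}}$ by classical Fourier inversion, condition on $(Y_{2},Z)$ using the independence of $Y_{1}$, and then do a single Fubini interchange justified by the majorant $\smlabs{f(u)}\smlabs{\psi_{Y_{1}}(\mu)}\smlabs{g(Z)}$, whose integral is $\smlnorm f_{1}\smlnorm{\psi_{Y_{1}}}_{1}\expect\smlabs{g(Z)}<\infty$; the identity $\psi_{Y_{1}}(-\lambda)\expect[\e^{-\i\lambda Y_{2}}g(Z)]=\expect[\e^{-\i\lambda Y}g(Z)]$ then gives \eqref{fourinv}. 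Both arguments rest on exactly the same structural point --- independence of the smooth component $Y_{1}$ supplies the integrable factor $\psi_{Y_{1}}$ that compensates for $\hat{f}\notin L^{1}$ --- but your version is a complete direct proof (and, as you note, only uses integrability of $\psi_{Y_{1}}$, not of $\psi_{Y_{2}}$), whereas the paper's is shorter at the cost of leaning on the bounded-$g$ result proved elsewhere.
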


Let $\filt_{s}^{t}\defeq\sigma(\{\epsilon_{r}\}_{r=s}^{t})$, noting
that $\filt_{s_{1}}^{s_{2}}$ and $\filt_{s_{3}}^{s_{4}}$ are independent
whenever $s_{1}\leq s_{2}<s_{3}\leq s_{4}$. We shall have frequent
recourse to the following decomposition,
\begin{align}
x_{t}=\sum_{k=1}^{t}v_{t}=\sum_{k=1}^{t}\sum_{l=0}^{\infty}\phi_{l}\epsilon_{k-l} & =\left[\sum_{i=0}^{\infty}\epsilon_{-i}\sum_{j=i+1}^{i+t}\phi_{j}+\sum_{i=t-s+1}^{t-1}\epsilon_{t-i}\sum_{j=0}^{i}\phi_{j}\right]+\sum_{i=0}^{t-s}\epsilon_{t-i}\sum_{j=0}^{i}\phi_{j}\nonumber \\
 & \eqdef x_{s-1,t}^{\ast}+x_{s,t,t}^{\prime},\label{eq:decomp1}
\end{align}
for $1\leq s\leq t$, where $x_{s-1,t}^{\ast}$ and $x_{s,t,t}^{\prime}$
are independent, and $x_{s-1,t}^{\ast}$ is $\filt_{-\infty}^{s-1}$-measurable.
Defining $a_{i}\defeq\sum_{j=0}^{i}\phi_{j}$, we may further decompose
$x_{s,t,t}^{\prime}$ as
\begin{equation}
x_{s,t,t}^{\prime}=\sum_{i=s}^{t}a_{t-i}\epsilon_{i}=\sum_{i=s}^{r}a_{t-i}\epsilon_{i}+\sum_{i=r+1}^{t}a_{t-i}\epsilon_{i}\eqdef x_{s,r,t}^{\prime}+x_{r+1,t,t}^{\prime},\label{eq:decomp2}
\end{equation}
where $x_{s,r,t}^{\prime}$ is $\filt_{s}^{r}$-measurable, and $x_{r+1,t,t}^{\prime}$
is $\filt_{r+1}^{t}$-measurable. The following property of the coefficients
$\{a_{i}\}$ is particularly important: there exist $0<\blw a\leq\abv a<\infty$,
and a $\minidx\in\naturals$ such that
\begin{equation}
\blw a\leq\inf_{\minidx+1\leq k}\inf_{\smlfloor{k/2}\leq l\le k}c_{k}^{-1}\smlabs{a_{l}}\leq\sup_{\minidx+1\leq k}\sup_{\smlfloor{k/2}\leq l\le k}c_{k}^{-1}\smlabs{a_{l}}\leq\abv a.\label{eq:coefcf}
\end{equation}
This is an easy consequence of Karamata's theorem (see \XREFlemmasproof{}
of the Supplement to \citet{Duffy14uniform} for a proof). Throughout
the remainder of the paper, $\minidx$ refers to the object of \eqref{coefcf};
it is also implicitly maintained $\minidx\geq8\cfidx$ for $\cfidx$
as in \assref{reg}\enuref{iidseq}.

Having decomposed $x_{t}$ into a sum of independent components, we
shall proceed to control such objects as the right side of \eqref{fourinv}
with the aid of the following lemma, which provides bounds on integrals
involving the characteristic functions of some of those components
of $x_{t}$. (This lemma summarises and refines some of the calculations
presented on pp.~15--21 of \citealp{Jeg08CDFP}.) In order to state
this result, we first note that \assref{reg}\enuref{iidseq} is equivalent
to the statement that 
\begin{equation}
\log\psi(\lambda)=-\smlabs{\lambda}^{\alpha}G(\lambda)\left[1-\i\beta\tan\left(\frac{\pi\alpha}{2}\right)\right]\label{eq:dofattr}
\end{equation}
for all $\lambda$ in a neighbourhood of the origin, where $G$ is
slowly varying at zero (see \citealp[Thm.~2.6.5]{IL71}). (Here, as
throughout the remainder of this paper, a slowly varying (or regularly
varying) function is understood to take only strictly positive values,
and have the property that $G(\lambda)=G(\smlabs{\lambda})$ for every
$\lambda\in\reals$.)
\begin{lem}
\label{lem:covbnd} Let $p\in[0,5]$, $q\in[1,2]$, and $z_{1},z_{2}\in\reals_{+}$.
Then
\begin{enumerate}
\item \label{enu:cov1}there exists a $\gamma_{1}>0$ such that for every
$t\geq0$, $k\geq\minidx+1$ and $m\in\{0,\ldots k-1\}$, 
\begin{align}
\int_{\reals}(z_{1}\smlabs{\lambda}^{p}\pmin z_{2})\smlabs{\expect\eta_{t+k-m}\e^{-\i\lambda x_{t+1,t+k,t+k}^{\prime}}}^{q}\diff\lambda & \lesssim z_{1}c_{m}^{q}d_{k}^{-(1+p+q)}+z_{2}\e^{-\gamma_{1}k};\label{eq:cov1a}
\end{align}
and if $F(u)\asymp G^{p/\alpha}(u)$ as $u\goesto0$, 
\begin{multline}
\int_{\reals}(z_{1}\smlabs{a_{k}}^{p}\smlabs{\lambda}^{p}F(a_{k}\lambda)\pmin z_{2})\smlabs{\expect\eta_{t+k-m}\e^{-\i\lambda x_{t+1,t+k,t+k}^{\prime}}}^{q}\diff\lambda\\
\lesssim z_{1}c_{m}^{q}k^{-p/\alpha}d_{k}^{-(1+q)}+z_{2}\e^{-\gamma_{1}k};\label{eq:cov1b}
\end{multline}

\item \label{enu:cov2}for every $t\geq1$, $k\geq\minidx+1$ and $s\in\{\minidx+1,\ldots,t\}$,
\[
\int_{\reals}\smlabs{\expect\e^{-\i\lambda x_{t-s+1,t-1,t+k}^{\prime}}}\diff\lambda\lesssim\frac{c_{s}}{c_{k+s}}d_{s}^{-1}.
\]

\end{enumerate}
\end{lem}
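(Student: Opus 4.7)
My plan is to exploit the independence of $\{\epsilon_t\}$ to factorise the expectations on the left of \eqref{cov1a}, \eqref{cov1b}, and part~(ii) as products of characteristic functions $\psi(a_j\lambda)$, and then to bound these products using the domain-of-attraction representation \eqref{dofattr} on a neighbourhood $\{\smlabs{\lambda}\leq\lambda_0\}$ of the origin, and the integrability assumption $\psi\in\Ellp{\cfidx}$ on the tail region $\{\smlabs{\lambda}>\lambda_0\}$. The condition $\minidx\geq 8\cfidx$ maintained throughout the paper ensures that there are enough factors $\smlabs{\psi(a_j\lambda)}$ available in the tail region to yield integrability via H\"{o}lder's inequality, producing the universal factor $\e^{-\gamma_1 k}$.

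For part~(i), I would first isolate from $x^{\prime}_{t+1,t+k,t+k}=\sum_{j=0}^{k-1}a_j\epsilon_{t+k-j}$ the one summand $a_m\epsilon_{t+k-m}$ on which $\eta_{t+k-m}$ depends, yielding
\[
\expect\eta_{t+k-m}\e^{-\i\lambda x^{\prime}_{t+1,t+k,t+k}} = \bigl(\expect\eta_0\e^{-\i\lambda a_m\epsilon_0}\bigr)\prod_{\substack{j=0\\j\neq m}}^{k-1}\psi(a_j\lambda).
\]
Because $\expect\eta_0=0$ and $\expect\smlabs{\eta_0\epsilon_0}<\infty$, the first factor is bounded both by $\smlabs{a_m\lambda}\expect\smlabs{\eta_0\epsilon_0}\lesssim c_m\smlabs{\lambda}$ (using $\smlabs{a_m}\asymp c_m$ from \eqref{coefcf}) and by $\smlnorm{\eta_0}_q$, so upon raising to the $q$-th power it contributes the prefactor $c_m^q\smlabs{\lambda}^q\pmin 1$. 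For the remaining product, \eqref{dofattr}, \eqref{coefcf}, and Karamata's theorem combine to give $\prod_{j\neq m}\smlabs{\psi(a_j\lambda)}^q\lesssim \exp(-c\smlabs{\lambda d_k}^\alpha G_{\ast}(\lambda d_k))$ on $\{\smlabs{\lambda}\leq\lambda_0\}$, for some slowly varying $G_{\ast}$. The change of variables $\mu=d_k\lambda$ then converts the integral over this region into $d_k^{-(1+p+q)}\int\smlabs{\mu}^{p+q}\exp(-c\smlabs{\mu}^\alpha G_{\ast}(\mu))\diff\mu$, which is finite, giving the claimed $z_1 c_m^q d_k^{-(1+p+q)}$ contribution; the tail region contributes $z_2\e^{-\gamma_1 k}$ via H\"{o}lder's inequality and $\int\smlabs{\psi}^{\cfidx}<\infty$. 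Part~(i.b) is structurally identical, with the weight $F(a_k\lambda)\asymp G^{p/\alpha}(c_k\lambda)$ combining with the product bound to absorb the extra factor $k^{-p/\alpha}$ upon rescaling.

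For part~(ii), the factorisation $\expect\e^{-\i\lambda x^{\prime}_{t-s+1,t-1,t+k}}=\prod_{j=k+1}^{k+s-1}\psi(a_j\lambda)$ together with the comparability $\smlabs{a_j}\asymp c_j$ for $j\in[k+1,k+s-1]$ (via \eqref{coefcf} applied with $k^{\prime}=j\geq\minidx+1$) yields, on $\{\smlabs{\lambda}\leq\lambda_0\}$, the product bound $\exp(-c\smlabs{\lambda}^\alpha s c_{k+s}^\alpha G(c_{k+s}\lambda))$, where the comparability of $c_j$ to $c_{k+s}$ across the range is absorbed into a slowly varying adjustment. The rescaling $\lambda=\mu c_s/(c_{k+s}d_s)$ exploits $d_s^\alpha=s c_s^\alpha\varrho_s^\alpha$ to collapse this into $\exp(-c^{\prime}\smlabs{\mu}^\alpha)$, giving the claimed $c_s/(c_{k+s}d_s)$ scaling once the $\mu$-integral is recognised as $O(1)$; the tail region is handled via $\psi\in\Ellp{\cfidx}$ as in part~(i).

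The principal technical hurdle is keeping careful track of the slowly varying factors $G$, $F$, and $\{\pi_k\}$ as they propagate through the rescaling steps, ensuring that the $\mu$-integrals are uniformly bounded and that comparabilities such as $c_j\asymp c_{k+s}$ for $j\in[k+1,k+s-1]$ remain valid uniformly in $k$ and $s$. Most of this delicate accounting is essentially carried out in \citet[pp.~15--21]{Jeg08CDFP}; the present lemma consolidates and refines those estimates into the compact form stated.
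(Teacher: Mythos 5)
Your opening move coincides with the paper's: isolate the single summand $a_{m}\epsilon_{t+k-m}$ on which $\eta_{t+k-m}$ depends, bound $\smlabs{\expect\eta_{0}\e^{-\i\lambda a_{m}\epsilon_{0}}}$ by $c_{m}\smlabs{\lambda}\pmin1$ using $\expect\eta_{0}=0$, $\expect\smlabs{\eta_{0}\epsilon_{0}}<\infty$ and \eqref{coefcf}, and reduce everything to an integral of a weight against a product of characteristic functions. Two differences follow. First, the paper retains only the factors $\smlabs{\psi(-\lambda a_{l})}$ with $l\in\{\smlfloor{k/2}+1,\ldots,k-1\}\setminus\{m\}$, precisely because \eqref{coefcf} controls $\smlabs{a_{l}}$ only on that upper half of the index range; second, it then simply invokes \XREFcfprodbnd{} in the Supplement to \citet{Duffy14uniform} for the resulting integral, so the estimate you set out to derive by hand is exactly the content of that companion lemma (and part (ii) is likewise not re-proved from scratch in this paper).

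The genuine gap in your re-derivation is the division of the $\lambda$-axis at a \emph{fixed} $\lambda_{0}$. The representation \eqref{dofattr} is valid only for small arguments, and the arguments here are $a_{l}\lambda$ with $\smlabs{a_{l}}\asymp c_{k}$, where $c_{k}$ is regularly varying of index $H-1/\alpha$ and so may diverge or vanish; the split must therefore be at $\smlabs{\lambda}\asymp c_{k}^{-1}$ (respectively $c_{k+s}^{-1}$ in part (ii)), not at $\lambda_{0}$. Concretely, when $H>1/\alpha$ your claimed bound $\prod_{j}\smlabs{\psi(a_{j}\lambda)}^{q}\lesssim\exp(-c\smlabs{\lambda d_{k}}^{\alpha}G_{\ast}(\lambda d_{k}))$ is false for $\smlabs{\lambda}$ of order $\lambda_{0}$ (there only a geometric bound $\rho^{ck}$, $\rho<1$, is available, and $\rho^{ck}\gg\exp(-cd_{k}^{\alpha}\cdot\text{s.v.})$); when $H<1/\alpha$ the opposite problem occurs: on $(\lambda_{0},\delta c_{k}^{-1}]$ the arguments $a_{j}\lambda$ are still small, so your tail device (factors uniformly bounded away from one plus H\"{o}lder with $\psi\in L^{\cfidx}$) does not apply there. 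The standard repair is to use the stable-exponential bound on $\{\smlabs{\lambda}\lesssim c_{k}^{-1}\}$ and, on the complement, spend $\cfidx$ factors on integrability (each change of variables contributing $\smlabs{a_{j}}^{-1}\asymp c_{k}^{-1}$) and the remaining $\asymp k/2$ factors on geometric decay, which is the source of $z_{2}\e^{-\gamma_{1}k}$ in \eqref{cov1a}--\eqref{cov1b}; for part (ii), which has no additive exponential term, one must additionally check that the resulting $c_{k+s}^{-1}\e^{-\gamma s}$ is dominated by $c_{s}c_{k+s}^{-1}d_{s}^{-1}$, which holds because $d_{s}/c_{s}=s^{1/\alpha}\varrho_{s}$ grows only polynomially. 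Relatedly, your claim in part (ii) that $c_{j}\asymp c_{k+s}$ over all $j\in\{k+1,\ldots,k+s-1\}$ ``up to a slowly varying adjustment'' is false when $s\gg k$ (the discrepancy is of order $((k+1)/(k+s))^{H-1/\alpha}$, regularly rather than slowly varying); the fix is again the upper-half device: for $j\geq k+\smlceil{s/2}$ one has $j\geq\smlfloor{(k+s)/2}$, so \eqref{coefcf} applied at index $k+s$ gives $\smlabs{a_{j}}\asymp c_{k+s}$ for $\asymp s/2$ factors, which suffices for your rescaling $\lambda\mapsto\mu c_{s}/(c_{k+s}d_{s})$ to deliver the stated bound.
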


We note here, for future reference, that the preceding continues to
hold when $\eta_{t}$ is replaced by $\eta_{t}^{(\leq)}$ as defined
in \eqref{etatrunc} below. Let $\expect_{t}[\cdot]\defeq\expect[\cdot\mid\filt_{-\infty}^{t}]$.
The following is an easy consequence of the preceding.
\begin{lem}
\label{lem:1stmom}Suppose $f\in\BI$. Then
\begin{enumerate}
\item \label{enu:1stmom} for every $t\geq0$ and $k\geq\minidx+1$, 
\[
\expect_{t}\smlabs{f(x_{t+k})\eta_{t+k-m}}\lesssim d_{k}^{-1}\smlnorm f_{1}\cdot\begin{cases}
1 & \text{if }m\in\{0,\ldots,k-1\},\\
\smlabs{\eta_{t+k-m}} & \text{if }m\geq k;
\end{cases}
\]

\item \label{enu:1stcov}and, if in addition $m\in\{0,\ldots,k-1\}$,
\[
\smlabs{\expect_{t}f(x_{t+k})\eta_{t+k-m}}\lesssim c_{m}d_{k}^{-2}\smlnorm f_{1}.
\]

\end{enumerate}
\end{lem}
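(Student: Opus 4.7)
The plan is to combine the Fourier-inversion Lemma~\lemref{fourinv} with the characteristic-function estimates in Lemma~\lemref{covbnd}, working from the decomposition $x_{t+k} = x_{t,t+k}^{\ast} + x'_{t+1,t+k,t+k}$ of \eqref{decomp1}, in which $x_{t,t+k}^{\ast}$ is $\filt_{-\infty}^{t}$-measurable and $x'_{t+1,t+k,t+k} = \sum_{j=0}^{k-1} a_j \epsilon_{t+k-j}$ is independent of $\filt_{-\infty}^{t}$.

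For part (i) with $m \geq k$, the innovation time satisfies $t+k-m \leq t$, so $\eta_{t+k-m}$ is $\filt_{-\infty}^{t}$-measurable and factors out of $\expect_t$. It then suffices to show $\expect_t \smlabs{f(x_{t+k})} \lesssim d_k^{-1} \smlnorm{f}_1$, which follows because the conditional density of $x_{t+k}$ given $\filt_{-\infty}^{t}$ equals the density $p_k$ of $x'_{t+1,t+k,t+k}$ shifted by $x_{t,t+k}^{\ast}$; the claim reduces to $\smlnorm{p_k}_\infty \lesssim d_k^{-1}$, obtained via Fourier inversion of $p_k$ and bounding $\int \smlabs{\psi_{x'_{t+1,t+k,t+k}}(\lambda)} \diff\lambda$ using $\psi \in L^{\cfidx}$, $\minidx \geq 8\cfidx$, and \eqref{coefcf}. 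For part (i) with $m \in \{0,\ldots,k-1\}$, I would further split $x'_{t+1,t+k,t+k} = Y_1 + Y_2$, where $Y_2 = a_m \epsilon_{t+k-m}$ carries the single coordinate on which $\eta_{t+k-m}$ depends, and $Y_1 = \sum_{j \in \{0,\ldots,k-1\}\setminus\{m\}} a_j \epsilon_{t+k-j}$ is independent of $(Y_2, \eta_{t+k-m})$. Conditioning on $(Y_2, \eta_{t+k-m})$ and using that $Y_1$ has bounded density with $\smlnorm{p_{Y_1}}_\infty \lesssim d_k^{-1}$ (by the same argument as before), Fubini delivers
\[
\expect_t \smlabs{f(x_{t+k}) \eta_{t+k-m}} \leq \smlnorm{p_{Y_1}}_\infty \smlnorm{f}_1 \expect\smlabs{\eta_{t+k-m}} \lesssim d_k^{-1} \smlnorm{f}_1,
\]
using $\expect \smlabs{\eta_0} < \infty$ from \assref{reg}(i).

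For part (ii), I would apply Lemma~\lemref{fourinv} conditional on $\filt_{-\infty}^{t}$, taking $y_0 = x_{t,t+k}^{\ast}$, $Y = x'_{t+1,t+k,t+k}$, $g(Z) = \eta_{t+k-m}$, and the same decomposition $Y = Y_1 + Y_2$ to ensure that $Y_1 \indep (Y_2, \eta_{t+k-m})$ and $\psi_{Y_1} \in L^1$. The resulting Fourier representation, together with $\smlabs{\hat{f}(\lambda)} \leq \smlnorm{f}_1$, yields
\[
\smlabs{\expect_t f(x_{t+k}) \eta_{t+k-m}} \leq \frac{\smlnorm{f}_1}{2\pi} \int_\reals \bigabs{\expect\eta_{t+k-m}\e^{-\i\lambda x'_{t+1,t+k,t+k}}} \diff\lambda,
\]
and Lemma~\lemref{covbnd}(i) with $p=0$, $q=1$, $z_1 = z_2 = 1$ bounds the integral by $c_m d_k^{-2} + \e^{-\gamma_1 k}$. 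Since the exponential remainder is dominated by the polynomially-decaying first term for $k \geq \minidx+1$, the stated bound follows.

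The chief technical point underpinning the plan is the density estimate $\smlnorm{p_{Y_1}}_\infty \lesssim d_k^{-1}$ (and its analogue for the full sum $x'_{t+1,t+k,t+k}$). This is a Fourier-scaling argument: the product $\prod_j \smlabs{\psi(a_j\lambda)}$ is partitioned into sub-products of $\cfidx$ factors indexed by $j$ with $\smlabs{a_j} \asymp c_k$ (such indices exist in sufficient number by \eqref{coefcf}), and each sub-product is controlled by $\int \smlabs{\psi(a_j\lambda)}^{\cfidx}\diff\lambda = \smlabs{a_j}^{-1}\smlnorm{\psi}_{\cfidx}^{\cfidx}$ via the change of variable $\mu = a_j\lambda$. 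The requirement $\minidx \geq 8\cfidx$ secures enough such factors for this H\"older argument, and the resulting scaling aligns with $d_k = k^{1/\alpha} c_k \varrho_k$ up to slowly varying terms.
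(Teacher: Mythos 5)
Your part (ii) is, in substance, the paper's own proof: condition on $\filt_{-\infty}^{t}$, apply \lemref{fourinv} to the decomposition \eqref{smpldecmp}, use $\smlabs{\hat f(\lambda)}\leq\smlnorm f_{1}$, and invoke \lemref{covbnd}\enuref{cov1}; your parameter choice $p=0$, $q=1$, $z_{1}=z_{2}=1$ is the right one, and absorbing $\e^{-\gamma_{1}k}$ into $c_{m}d_{k}^{-2}$ for $k\geq\minidx+1$ is legitimate since the latter decays only at a regularly varying rate. For part (i), the paper simply defers to \XREFstmom{} in \citet{Duffy14uniform}; your reconstruction — split off $a_{m}\epsilon_{t+k-m}$ so that the remaining block $Y_{1}$ is independent of $(\epsilon_{t+k-m},\eta_{t+k-m})$, use a bounded density for $Y_{1}$ together with Fubini, and factor $\smlabs{\eta_{t+k-m}}$ out of the conditional expectation when $m\geq k$ — has exactly the right architecture.

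The gap is in your justification of the central estimate $\smlnorm{p_{Y_{1}}}_{\infty}\lesssim d_{k}^{-1}$. The pure H\"older/$L^{\cfidx}$ scaling you describe — retain $\cfidx$ factors with $\smlabs{a_{j}}\asymp c_{k}$ and bound each via $\int\smlabs{\psi(a_{j}\lambda)}^{\cfidx}\diff\lambda=\smlabs{a_{j}}^{-1}\smlnorm{\psi}_{\cfidx}^{\cfidx}$ — yields only $\int\prod_{j}\smlabs{\psi(a_{j}\lambda)}\diff\lambda\lesssim c_{k}^{-1}$, which falls short of $d_{k}^{-1}=(k^{1/\alpha}c_{k}\varrho_{k})^{-1}$ by the factor $k^{1/\alpha}\varrho_{k}$; this factor is regularly varying with index $1/\alpha>0$, so your claim that the discrepancy is only ``up to slowly varying terms'' is false, and the weaker bound $c_{k}^{-1}$ would not survive the downstream summations (e.g.\ \lemref{fundamental} and \lemref{mgbnd} need $\sum_{t}d_{t}^{-1}\lesssim\nseq_{n}$ and $\sum_{t}d_{t}^{-2}\lesssim\nseq_{n}^{1/2}$). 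What is missing is the use of the domain-of-attraction expansion \eqref{dofattr} across the \emph{order-$k$} indices $l\in\{\smlfloor{k/2},\ldots,k-1\}$ with $\smlabs{a_{l}}\asymp c_{k}$: on a central region $\smlabs{c_{k}\lambda}\leq\delta$ one has $\prod_{l}\smlabs{\psi(a_{l}\lambda)}\lesssim\exp(-Ck\smlabs{c_{k}\lambda}^{\alpha}G(c_{k}\lambda))$, whose integral is of order $d_{k}^{-1}$, while the region $\smlabs{c_{k}\lambda}>\delta$ is handled by the $L^{\cfidx}$/H\"older device together with $\sup_{\smlabs u\geq\delta^{\prime}}\smlabs{\psi(u)}<1$ raised to a power proportional to $k$, which is exponentially negligible. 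This is precisely what \XREFcfprodbnd{} in the Supplement to \citet{Duffy14uniform} packages (and what \lemref{covbnd} encapsulates here); the cleanest repair is to cite that product bound for $\int\prod_{l}\smlabs{\psi(a_{l}\lambda)}\diff\lambda\lesssim d_{k}^{-1}$ rather than attempt to rederive it from $\psi\in L^{\cfidx}$ alone.
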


Recall the definitions of $\{d_{k}\}$ and $\{\nseq_{k}\}$ given
in \eqref{dkek} above. The following is a straightforward consequence
of Karamata's theorem.
\begin{lem}
\label{lem:summable}~
\begin{enumerate}
\item \label{enu:dm2} $\sum_{t=1}^{n}d_{t}^{-2}\lesssim\nseq_{n}^{1/2}$;
\item \label{enu:dna} $\sum_{k=1}^{\infty}k^{-1/2}d_{k}^{-3/2}<\infty$;
\item \label{enu:dth} $\sum_{m=0}^{\infty}\smlabs{\theta_{m}}(c_{m}+m^{1/2}\nseq_{m})<\infty$.
\end{enumerate}
\end{lem}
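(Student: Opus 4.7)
All three parts will follow from straightforward applications of Karamata's theorem and the Potter bounds to the regularly varying sequences $\{c_k\}$, $\{d_k\}$ and $\{\nseq_k\}$ appearing in \eqref{ck}--\eqref{dkek}, whose indices of regular variation are $H-1/\alpha$, $H$ and $1-H$ respectively. The only subtlety lies in tracking the standing constraint $H > \tfrac{1}{3}$, which controls where the relevant indices fall relative to the critical value $-1$ (for convergence of the sums) or $\tfrac{7}{6}$ (for the $\theta$-summability in part~\enuref{dth}).

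Part~\enuref{dna} is immediate: the summand $k^{-1/2} d_k^{-3/2}$ is regularly varying with index $-\tfrac{1}{2} - \tfrac{3H}{2} < -1$, since $H > \tfrac{1}{3}$, so the series converges by Karamata's theorem (or by comparison with $k^{-1-\varepsilon}$ via Potter's bounds). For part~\enuref{dm2}, I split according to whether $H > \tfrac{1}{2}$ or $H \leq \tfrac{1}{2}$. In the former case, $d_t^{-2}$ has index $-2H < -1$, so $\sum_{t=1}^\infty d_t^{-2}$ converges and is trivially dominated by $\nseq_n^{1/2}$, whose index $(1-H)/2$ is strictly positive. In the latter case, Karamata's theorem yields $\sum_{t=1}^n d_t^{-2} \lesssim n d_n^{-2}$, absorbing the slowly varying factor present in the boundary case $H = \tfrac{1}{2}$ into $\lesssim$ (using $(1-H)/2 > 0$). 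The required bound thus reduces to $n d_n^{-2} \lesssim n^{1/2} d_n^{-1/2}$, i.e.\ $n^{1/2} d_n^{-3/2} \lesssim 1$; but this last quantity is regularly varying with index $\tfrac{1}{2}(1-3H)$, which is strictly negative precisely when $H > \tfrac{1}{3}$, and is hence bounded.

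For part~\enuref{dth}, I will bound both $c_m$ and $m^{1/2} \nseq_m$ by $O(m^{7/6})$ and invoke $\sum_m \smlabs{\theta_m} m^{7/6} < \infty$ from \assref{reg}\enuref{dist}. The sequence $c_m$ has regular variation index $H - 1/\alpha \leq H < 1 < \tfrac{7}{6}$, so Potter's bounds give $c_m \lesssim m^{7/6}$ eventually. The sequence $m^{1/2} \nseq_m = m^{3/2} d_m^{-1}$ has index $\tfrac{3}{2} - H$, which lies strictly below $\tfrac{7}{6}$ exactly when $H > \tfrac{1}{3}$, so Potter's bounds again yield $m^{1/2} \nseq_m \lesssim m^{7/6}$. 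There is no real obstacle to this argument; it is purely a bookkeeping exercise in regular variation, and the exponent $\tfrac{7}{6}$ in the summability hypothesis on $\{\theta_k\}$ and the restriction $H > \tfrac{1}{3}$ on the regressor are calibrated precisely so that all of the indices in question are strictly on the correct side of their respective critical values.
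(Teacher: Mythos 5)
Your proposal is correct and takes essentially the same route as the paper: identify the regular-variation indices of $\{c_k\}$, $\{d_k\}$ and $\{\nseq_k\}$, apply Karamata/Potter, and check that $H>\tfrac13$ together with the $k^{7/6}$-summability of $\{\theta_k\}$ places every index strictly on the right side of its critical value. One small caution on part (i): at the boundary $H=\tfrac12$ the partial sum $\sum_{t=1}^{n}d_{t}^{-2}$ is slowly varying (BGT, Prop.~1.5.9a) and can exceed $nd_{n}^{-2}$ by a slowly varying (e.g.\ logarithmic) factor, so the intermediate inequality $\sum_{t=1}^{n}d_{t}^{-2}\lesssim nd_{n}^{-2}$ is not literally valid there; but, as you note, this is harmless because $\nseq_{n}^{1/2}$ has strictly positive index $(1-H)/2$, which is exactly how the paper handles that case.
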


For the reader's convenience, Lemmas~\XREFcfprod{} and \XREFmgax{}
from \citet{Duffy14uniform} are reproduced below; see that paper
for the proofs. For the first of these, define
\[
\vartheta(z_{1},z_{2})\defeq\expect\left[\e^{-\i z_{1}\epsilon_{0}}-\expect\e^{-\i z_{1}\epsilon_{0}}\right]\left[\e^{-\i z_{2}\epsilon_{0}}-\expect\e^{-\i z_{2}\epsilon_{0}}\right].
\]

\begin{lem}
\label{lem:cfdiffprod}Uniformly over $z_{1},z_{2}\in\reals$, 
\[
\smlabs{\vartheta(z_{1},z_{2})}\lesssim[\smlabs{z_{1}}^{\alpha}\tilde{G}(z_{1})\pmin1]^{1/2}[\smlabs{z_{2}}^{\alpha}\tilde{G}(z_{2})\pmin1]^{1/2}
\]
where $\tilde{G}(u)\asymp G(u)$ as $u\goesto0$.
\end{lem}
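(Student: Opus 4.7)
The plan is to reduce the bilinear quantity $\vartheta(z_1,z_2)$ to a product of two univariate second moments via Cauchy--Schwarz, and then control each factor by exploiting the explicit asymptotic form of $\psi$ near the origin given by \eqref{eq:dofattr}.

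First I would apply Cauchy--Schwarz to obtain
\[
\smlabs{\vartheta(z_1, z_2)} \leq \prod_{i=1,2} \expect^{1/2}\bigabs{\e^{-\i z_i \epsilon_0} - \expect \e^{-\i z_i \epsilon_0}}^{2},
\]
and then, since $\smlabs{\e^{-\i z \epsilon_0}} = 1$, a direct calculation gives
\[
\expect\bigabs{\e^{-\i z \epsilon_0} - \expect \e^{-\i z \epsilon_0}}^{2} = 1 - \smlabs{\psi(z)}^{2}.
\]
Thus the statement reduces to establishing a single-variable bound of the form $1 - \smlabs{\psi(z)}^{2} \lesssim \smlabs{z}^{\alpha}\tilde{G}(z) \pmin 1$, uniformly in $z \in \reals$.

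For $z$ in the neighbourhood $[-\delta,\delta]$ on which \eqref{eq:dofattr} holds, I would combine $\log\psi(z)$ with its conjugate $\log\overline{\psi(z)}$; the imaginary parts (proportional to $\beta\tan(\pi\alpha/2)$) cancel, yielding the clean identity
\[
\smlabs{\psi(z)}^{2} = \exp\bigl(-2\smlabs{z}^{\alpha} G(z)\bigr).
\]
The elementary inequality $1 - \e^{-x} \leq x \pmin 1$ for $x \geq 0$ then delivers $1 - \smlabs{\psi(z)}^{2} \leq 2\smlabs{z}^{\alpha}G(z) \pmin 1$ on this neighbourhood.

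For $\smlabs{z} > \delta$, the representation \eqref{eq:dofattr} is no longer guaranteed, but I would simply fall back on the trivial bound $1 - \smlabs{\psi(z)}^{2} \leq 1$. To express both regimes in a uniform form, it then suffices to set $\tilde{G}(z) \defeq 2G(z)$ for $\smlabs{z} \leq \delta$ and $\tilde{G}(z) \defeq \delta^{-\alpha}$ (or any larger constant) for $\smlabs{z} > \delta$; this choice clearly satisfies $\tilde{G}(u) \asymp G(u)$ as $u \goesto 0$, and guarantees $\smlabs{z}^{\alpha}\tilde{G}(z) \pmin 1 = 1$ whenever $\smlabs{z} > \delta$, absorbing the trivial bound. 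Combining the two regimes and re-inserting the result into the Cauchy--Schwarz estimate yields the claim. There is no substantial obstacle: the only subtlety is the uniform handling of the large-$\smlabs{z}$ regime, which is dispatched by this constructive extension of $G$.
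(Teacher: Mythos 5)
Your proof is correct. Note that this paper never proves the lemma itself: it is reproduced from \citet{Duffy14uniform} and the reader is simply referred there, so there is no in-text argument to compare against. Your self-contained route is the natural one and every step checks out: Cauchy--Schwarz reduces $\vartheta(z_{1},z_{2})$ to the factors $(1-\smlabs{\psi(z_{i})}^{2})^{1/2}$, since $\expect\smlabs{\e^{-\i z\epsilon_{0}}-\expect\e^{-\i z\epsilon_{0}}}^{2}=1-\smlabs{\psi(z)}^{2}$; the real part of \eqref{dofattr} gives $\smlabs{\psi(z)}^{2}=\exp(-2\smlabs{z}^{\alpha}G(z))$ on a neighbourhood of the origin (the $\beta\tan(\pi\alpha/2)$ term is purely imaginary and drops out), so $1-\e^{-x}\leq x\pmin 1$ yields the bound there; and for $\smlabs{z}>\delta$ the trivial bound $1-\smlabs{\psi(z)}^{2}\leq1$ is absorbed by setting $\tilde{G}\geq\delta^{-\alpha}$ off that neighbourhood, which is legitimate because the statement (and its downstream use in \lemref{mgbnd} via \lemref{covbnd}) only requires $\tilde{G}(u)\asymp G(u)$ as $u\goesto0$, with $\tilde{G}$ positive and symmetric as per the paper's conventions.
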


Let $\smlnorm{\cdot}_{\tau_{1}}$ denote the Orlicz norm associated
to $\tau_{1}(x)\defeq\e^{x}-1$. (See \citealp{VVW96}, p.~95 for
the definition of an Orlicz norm.) For a martingale $M\defeq\{M_{t}\}_{t=0}^{n}$
with associated filtration $\filtg\defeq\{\filtg_{t}\}_{t=0}^{n}$,
define 
\begin{align}
\smlqv M & \defeq\sum_{t=1}^{n}(M_{t}-M_{t-1})^{2} & \smlcv M & \defeq\sum_{t=1}^{n}\expect[(M_{t}-M_{t-1})^{2}\mid\filtg_{t-1}].\label{eq:qvcv}
\end{align}
We say that $M$ is \emph{initialised at zero} if $M_{0}=0$. The
next result is a straightforward consequence of Theorem~2.1 in \citet{BT08AAP}.
\begin{lem}
\label{lem:mgmax} Let $\{\Theta_{n}\}$ denote a sequence of index
sets, and $\{K_{n}\}$ a real sequence such that $\#\Theta_{n}+K_{n}\lesssim n^{C}$.
Suppose that for each $n\in\naturals$, $k\in\{1,\ldots,K_{n}\}$
and $\theta\in\Theta_{n}$, $M_{nk}(\theta)$ is a martingale, initialised
at zero, for which
\begin{equation}
\omega_{nk}^{2}\defeq\max_{\theta\in\Theta_{n}}\{\smlnorm{\smlqv{M_{nk}(\theta)}}_{\tau_{1}}\pmax\smlnorm{\smlcv{M_{nk}(\theta)}}_{\tau_{1}}\}<\infty.\label{eq:omnk}
\end{equation}
Then 
\[
\max_{\theta\in\Theta_{n}}\abs{\sum_{k=1}^{K_{n}}M_{nk}(\theta)}\isbigop\left(\sum_{k=1}^{K_{n}}\omega_{nk}\right)\log n.
\]

\end{lem}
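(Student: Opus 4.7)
The plan is to extract a Bernstein-type tail bound on each $M_{nk}(\theta)$ from the Bercu--Touati inequality cited in the statement, convert it into a bound on the Orlicz norm $\smlnorm{\cdot}_{\tau_{1}}$ of the terminal value, and then combine a triangle inequality in $k$ with the standard Orlicz maximal inequality over the finite set $\Theta_{n}$.

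For each fixed $k,\theta$, Theorem~2.1 of \citet{BT08AAP} applied to $M_{nk}(\theta)$ (with $M_{nk}(\theta)$ denoting, by a slight abuse of notation, both the martingale and its terminal value) yields
\[
\Prob\{\smlabs{M_{nk}(\theta)}\geq x,\;\smlqv{M_{nk}(\theta)}+\smlcv{M_{nk}(\theta)}\leq y\}\leq2\exp(-x^{2}/(2y)),
\]
for every $x,y>0$. By the definition of the $\tau_{1}$-Orlicz norm, \eqref{omnk} gives $\Prob\{\smlqv{M_{nk}(\theta)}+\smlcv{M_{nk}(\theta)}>y\}\lesssim\exp(-y/(C\omega_{nk}^{2}))$. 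Optimising the threshold by setting $y=\omega_{nk}x$ and adding the two tails produces $\Prob\{\smlabs{M_{nk}(\theta)}\geq x\}\lesssim\exp(-x/(C\omega_{nk}))$. Invoking the defining property of $\smlnorm{\cdot}_{\tau_{1}}$ in the reverse direction, this yields $\smlnorm{M_{nk}(\theta)}_{\tau_{1}}\lesssim\omega_{nk}$ uniformly in $\theta\in\Theta_{n}$.

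Next, the triangle inequality for $\smlnorm{\cdot}_{\tau_{1}}$ gives $\Bignorm{\sum_{k=1}^{K_{n}}M_{nk}(\theta)}_{\tau_{1}}\leq\sum_{k=1}^{K_{n}}\omega_{nk}$ for each $\theta$. Applying the Orlicz maximal inequality (e.g.\ Lemma~2.2.2 in \citealp{VVW96}) to the finite collection $\{\sum_{k}M_{nk}(\theta)\}_{\theta\in\Theta_{n}}$ then yields
\[
\Bignorm{\max_{\theta\in\Theta_{n}}\Bigabs{\sum_{k=1}^{K_{n}}M_{nk}(\theta)}}_{\tau_{1}}\lesssim\tau_{1}^{-1}(\#\Theta_{n})\sum_{k=1}^{K_{n}}\omega_{nk}\lesssim\Bigl(\sum_{k=1}^{K_{n}}\omega_{nk}\Bigr)\log n,
\]
since $\tau_{1}^{-1}(N)=\log(1+N)$ and $\#\Theta_{n}\lesssim n^{C}$. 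Markov's inequality in its Orlicz form then delivers the claimed $\isbigop$ conclusion.

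The only non-routine step is the derivation in the second paragraph, where the Bercu--Touati bound conditioned on a level set of $\smlqv{\cdot}+\smlcv{\cdot}$ must be combined with the exponential tail of $\smlqv{\cdot}+\smlcv{\cdot}$ supplied by \eqref{omnk} via the optimal choice of $y$. Once that subexponential tail on $M_{nk}(\theta)$ is in hand, the remainder is a standard application of Orlicz-norm machinery, and the assumption $\#\Theta_{n}+K_{n}\lesssim n^{C}$ is used only to convert $\log\#\Theta_{n}$ into $\log n$.
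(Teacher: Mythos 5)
Your proposal is correct and follows essentially the route the paper intends: extract a subexponential tail for each $M_{nk}(\theta)$ from Theorem~2.1 of \citet{BT08AAP} by intersecting with the event $\{\smlqv{M_{nk}(\theta)}+\smlcv{M_{nk}(\theta)}\leq y\}$ and using the $\tau_{1}$-norm bound \eqref{omnk} on the complement, convert this to $\smlnorm{M_{nk}(\theta)}_{\tau_{1}}\lesssim\omega_{nk}$, and then combine the triangle inequality in $k$ with the Orlicz maximal inequality over the polynomially growing set $\Theta_{n}$ to produce the $\log n$ factor. The paper itself defers the detailed argument to \citet{Duffy14uniform}, describing the lemma as a straightforward consequence of the Bercu--Touati inequality, which is exactly what you have carried out; no gaps.
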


\section{Controlling the truncated covariance process\label{sec:covproof}}

We turn first to the proof of \propref{covtruncord}. For this section
only, we shall denote $\eta_{t}^{(\leq)}$ by simply $\eta_{t}$.
Then recalling \eqref{Snf} above, we may write
\begin{equation}
\S_{n}f=\sum_{t=1}^{n}f(x_{t})u_{t}^{(\leq)}=\sum_{m=0}^{n}\theta_{m}\sum_{t=1}^{n}f(x_{t})\eta_{t-m}\eqdef\sum_{m=0}^{n}\theta_{m}\S_{nm}f.\label{eq:Sndecomp}
\end{equation}
For each $m\in\{0,\ldots,n\}$, by following a procedure identical
to that described in \XREFptwise{} in \citet{Duffy14uniform}, the
process $\S_{nm}f$ may be decomposed as
\begin{equation}
\S_{nm}f=\N_{nm}f+\sum_{k=0}^{n-1}\M_{nmk}f\label{eq:Snmdecomp}
\end{equation}
where
\begin{align*}
\N_{nm}f & \defeq\sum_{t=1}^{n}\expect_{0}f(x_{t})\eta_{t-m} & \M_{nmk}f & \defeq\sum_{t=1}^{n-k}\mg_{mkt}f
\end{align*}
\begin{equation}
\mg_{mkt}f\defeq\expect_{t}f(x_{t+k})\eta_{t+k-m}-\expect_{t-1}f(x_{t+k})\eta_{t+k-m},\label{eq:xi}
\end{equation}
and we have defined $\expect_{t}[\cdot]\defeq\expect[\cdot\mid\filt_{-\infty}^{t}]$.

A suitable bound for $\smlnorm{\N_{nm}f}_{\infty}$ is provided by
\lemref{1stmom}\enuref{1stcov}. By construction, $\{\xi_{mkt},\filt_{-\infty}^{t}\}_{t=1}^{n-k}$
forms a martingale difference sequence for each $(m,k)$, and so control
over each of the martingale `pieces' $\M_{nmk}f$ may be obtained
via control over
\begin{gather*}
\U_{nmk}f\defeq\smlqv{\M_{nmk}f}=\sum_{t=1}^{n-k}\mg_{mkt}^{2}f\\
\V_{nmk}f\defeq\smlcv{\M_{nmk}f}=\sum_{t=1}^{n-k}\expect_{t-1}\mg_{mkt}^{2}f,
\end{gather*}
in combination with \lemref{mgmax}. Defining
\begin{equation}
\varsigma_{nm}(f)\defeq\truncnorm{\eta}\smlnorm f_{\infty}+(\nseq_{m}\truncnorm{\eta}+c_{m}\nseq_{n}^{1/2})\smlnorm f_{1}\label{eq:varsig}
\end{equation}
and
\begin{equation}
\sigma_{nmk}^{2}(f)\defeq\begin{cases}
\truncnorm{\eta}^{2}\smlnorm f_{\infty}^{2}+(\nseq_{m}\truncnorm{\eta}^{2}+\nseq_{n})\smlnorm f_{2}^{2} & \text{if }k\in\{0,\ldots,\minidx\}\\
d_{k}^{-1}(\nseq_{m}\truncnorm{\eta}^{2}+\nseq_{n})\smlnorm f_{1}^{2} & \text{if }k\in\{\minidx+1,\ldots,m\}\\
(k^{-1}d_{k}^{-3}c_{m}^{2}+\e^{-\gamma_{1}k})\nseq_{n}\smlnorm f_{1}^{2} & \text{if }k\in\{\minidx\pmax m+1,\ldots,n-1\},
\end{cases}\label{eq:signkdef}
\end{equation}
our first result is
\begin{lem}
\label{lem:fundamental}For all $m\in\{0,\ldots,n\}$
\begin{equation}
\smlnorm{\N_{nm}f}_{\infty}\lesssim\varsigma_{nm}(f),\label{eq:Nnbnd}
\end{equation}
and all $0\leq k\leq n-1$,
\begin{equation}
\smlnorm{\U_{nmk}f}_{\tau_{1}}\pmax\smlnorm{\V_{nmk}f}_{\tau_{1}}\lesssim\sigma_{nmk}^{2}(f).\label{eq:sqbnds}
\end{equation}

\end{lem}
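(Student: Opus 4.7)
The plan is to treat the two claims separately, as they rest on quite different calculations: the $\N_{nm}f$ estimate is essentially deterministic once the a.s.\ bound on $\eta^{(\leq)}$ is exploited, while the $\U_{nmk}f$ and $\V_{nmk}f$ estimates need subexponential concentration for (squared) martingale increments.

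For $\N_{nm}f=\sum_{t=1}^{n}\expect_{0}f(x_{t})\eta_{t-m}$ I would split the sum at $t=\minidx$ and at $t=m$, so that the three pieces exactly match the three terms of $\varsigma_{nm}(f)$. On $t\leq\minidx$ the contribution per term is bounded crudely by $\truncnorm{\eta}\smlnorm{f}_{\infty}$, yielding the $\truncnorm{\eta}\smlnorm{f}_{\infty}$ piece. On $\minidx+1\leq t\leq m$, $\eta_{t-m}$ is $\filt_{-\infty}^{0}$-measurable and hence a.s.\ dominated by $\truncnorm{\eta}$; combining this with $\lemref{1stmom}$\enuref{1stmom} gives $\truncnorm{\eta}d_{t}^{-1}\smlnorm{f}_{1}$, which sums (via Karamata, since $d_{t}^{-1}$ is regularly varying of index $-H$ with $H<1$) to $\truncnorm{\eta}\nseq_{m}\smlnorm{f}_{1}$. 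On $t>m\vee\minidx$, $\lemref{1stmom}$\enuref{1stcov} yields the sharper $c_{m}d_{t}^{-2}\smlnorm{f}_{1}$, summing to $c_{m}\nseq_{n}^{1/2}\smlnorm{f}_{1}$ via $\lemref{summable}$\enuref{dm2}.

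For the martingale quantities, boundedness of $\eta^{(\leq)}$ implies $\smlabs{\mg_{mkt}f}\leq 2\truncnorm{\eta}\smlnorm{f}_{\infty}$ a.s., so $\U_{nmk}f$ and $\V_{nmk}f$ are sums of nonnegative, a.s.-bounded (and, in the $\V$-case, predictable) terms. I would first bound $\expect\U_{nmk}f=\expect\V_{nmk}f=\sum_{t}\expect\mg_{mkt}^{2}f$ case by case in $k$, with each case producing one of the three formul{\ae} defining $\sigma_{nmk}^{2}(f)$. For $k\in\{0,\ldots,\minidx\}$, the pointwise majorisation $\mg_{mkt}^{2}f\leq 4\truncnorm{\eta}^{2}f^{2}(x_{t+k})$ together with $\lemref{1stmom}$\enuref{1stmom} applied to $f^{2}$ (so that $\smlnorm{f^{2}}_{1}=\smlnorm{f}_{2}^{2}$ enters) produces the $\truncnorm{\eta}^{2}\smlnorm{f}_{\infty}^{2}+(\nseq_{m}\truncnorm{\eta}^{2}+\nseq_{n})\smlnorm{f}_{2}^{2}$ bound. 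For $\minidx+1\leq k\leq m$, $\eta_{t+k-m}$ is $\filt^{t}$-measurable, and $\lemref{covbnd}$\enuref{cov1} with $p=0,\,q=2$, combined with the Fourier-inversion formula of $\lemref{fourinv}$, converts $\expect_{t-1}\mg_{mkt}^{2}f$ into an $L^{1}(f)$-type quantity carrying the prefactor $d_{k}^{-1}$. For $k>m\vee\minidx$, $\eta_{t+k-m}$ lies in the independent future block, and two nested applications of $\lemref{covbnd}$\enuref{cov1} yield the $k^{-1}d_{k}^{-3}c_{m}^{2}$ decay, with the $\e^{-\gamma_{1}k}$ term coming from the exponential tails in \eqref{cov1a}.

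The remaining step is to upgrade these expectation estimates to the Orlicz-$\tau_{1}$ bounds demanded by $\lemref{mgmax}$. Since $\U_{nmk}f$ and $\V_{nmk}f$ are nonnegative sums of a.s.-bounded increments, subexponential concentration around the mean follows from a Bernstein-type inequality (the martingale version for $\U$, the predictable-sum version for $\V$); the resulting deviation terms contribute at most an additive $\truncnorm{\eta}^{2}\smlnorm{f}_{\infty}^{2}$, which is already present in $\sigma_{nmk}^{2}$ for $k\leq\minidx$ and is dominated by the mean bound for larger $k$. The main obstacle I foresee is the intermediate range $\minidx+1\leq k\leq m$: $\eta_{t+k-m}$ straddles the filtration boundaries used in the decompositions \eqref{decomp1}--\eqref{decomp2}, so to extract the $d_{k}^{-1}$ factor without forfeiting the distinct contributions measured by $\nseq_{m}$ (autocovariance length of $\eta$) and $\nseq_{n}$ (sample length), one must split $f(x_{t+k})\eta_{t+k-m}$ into past-, cross-, and future-dependent pieces and apply $\lemref{covbnd}$ with carefully chosen exponents to each.
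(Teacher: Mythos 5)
Your treatment of $\N_{nm}f$ is exactly the paper's argument (split at $\minidx$ and $m$, apply \lemref{1stmom}, sum by Karamata and \lemref{summable}\enuref{dm2}), and your case analysis in $k$ correctly anticipates the three regimes behind $\sigma_{nmk}^{2}(f)$. The gap is in the final step for $\U_{nmk}f$ and $\V_{nmk}f$: you bound only the \emph{unconditional} mean $\expect\U_{nmk}f=\expect\V_{nmk}f$ and then invoke a Bernstein-type inequality to "upgrade" to the $\tau_{1}$-Orlicz bound. No such generic upgrade is available here. The increments $\mg_{mkt}^{2}f$ (and a fortiori $\expect_{t-1}\mg_{mkt}^{2}f$) are strongly dependent across $t$, since they all involve the same regressor path; a Freedman/Bernstein bound for $\U-\V$ merely shifts the problem to exponential integrability of the compensator $\V$, which is precisely the quantity at issue, and its unconditional mean does not control its $\tau_{1}$ norm. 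What the paper actually proves (\lemref{mgbnd}) is a \emph{uniform, conditional} estimate: $\smlnorm{\mg_{mkt}^{2}f}_{\infty}+\sum_{s}\smlnorm{\expect_{t}\mg_{m,k,t+s}^{2}f}_{\infty}\lesssim\sigma_{nmk}^{2}(f)$, with bounds independent of $t$; this is then fed into the standard exponential-moment lemma for sums of nonnegative adapted variables (the Khas'minskii-type argument referenced as the proof of Lemma~7.3 in Duffy, 2014), which is what converts conditional-tail-sum control into the $\tau_{1}$ bound. Your plan replaces $\smlnorm{\expect_{t}\mg_{m,k,t+s}^{2}f}_{\infty}$ by $\expect\mg^{2}$, and that substitution is where the argument fails.

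A second, concrete error: you claim the deviation penalty is at most an additive $\truncnorm{\eta}^{2}\smlnorm{f}_{\infty}^{2}$ and that this "is dominated by the mean bound for larger $k$". For $k>\minidx$, $\sigma_{nmk}^{2}(f)$ contains \emph{no} $\smlnorm{f}_{\infty}$ term, and for the rescaled kernels $f_{(a,b)}$ actually used (with $\smlnorm{f_{(a,b)}}_{\infty}^{2}\asymp b$ while $\smlnorm{f_{(a,b)}}_{1}^{2}\asymp b^{-1}$, $b$ up to $\abv b_{n}$) the term $\truncnorm{\eta}^{2}\smlnorm{f}_{\infty}^{2}$ is \emph{not} dominated by $d_{k}^{-1}(\nseq_{m}\truncnorm{\eta}^{2}+\nseq_{n})\smlnorm{f}_{1}^{2}$ or by $(k^{-1}d_{k}^{-3}c_{m}^{2}+\e^{-\gamma_{1}k})\nseq_{n}\smlnorm{f}_{1}^{2}$. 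Moreover an extra $\smlnorm{f}_{\infty}$ contribution for every $k$ would destroy the subsequent summation in \lemref{dnsum}, where it is essential that only the finitely many indices $k\leq\minidx$ carry the $\smlnorm{\Gset}_{\infty}$ term. The crude a.s.\ bound $\smlabs{\mg_{mkt}f}\leq2\truncnorm{\eta}\smlnorm{f}_{\infty}$ must therefore be abandoned for $k>\minidx$ in favour of the Fourier-inversion bounds (which give sup-norm estimates in terms of $\smlnorm{f}_{1}$, e.g.\ $(d_{k}^{-2}c_{m}+\e^{-\gamma_{1}k})^{2}\smlnorm{f}_{1}^{2}$), exactly as in the paper's proof of \lemref{mgbnd}. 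Your suggestion to use \lemref{covbnd}\enuref{cov1} with $q=2$ in the middle range $\minidx<k\leq m$ is workable in principle, but the paper handles that range more simply via \lemref{1stmom}\enuref{1stmom} and $\smlabs{\eta^{(\leq)}}\leq\truncnorm{\eta}$; the substantive fix you need is the conditional, $t$-uniform formulation of all these bounds and the nonnegative-summand Orlicz lemma, not a mean-plus-concentration argument.
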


The proof of \eqref{sqbnds}, in turn, relies upon
\begin{lem}
\label{lem:mgbnd}For every $m\in\{0,\ldots,n\}$, $k\in\{0,\ldots,n-1\}$
and $t\in\{1,\ldots,n-k\}$,
\[
\smlnorm{\mg_{mkt}^{2}f}_{\infty}+\sum_{s=1}^{n-k-t}\smlnorm{\expect_{t}\mg_{m,k,t+s}^{2}f}_{\infty}\lesssim\sigma_{nmk}^{2}(f).
\]

\end{lem}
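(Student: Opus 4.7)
The bound separates naturally into its pointwise and tail-sum contributions, and I would attack them separately.

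\textbf{Pointwise term.} Write $Y_{u}\defeq f(x_{u+k})\eta_{u+k-m}$, so $\mg_{mkt}^{2}f \leq 2(\expect_t Y_t)^2 + 2(\expect_{t-1}Y_t)^2$. When $k \leq \minidx$, the crude bound $|\expect_t Y_t| \leq \truncnorm{\eta}\smlnorm{f}_\infty$ supplies the $\truncnorm{\eta}^2 \smlnorm{f}_\infty^2$ piece of $\sigma^2_{nmk}(f)$. When $k \geq \minidx+1$, I would invoke \lemref{1stmom}\enuref{1stmom} (with a side-step to \enuref{1stcov} when the $c_m$-sharpening is needed for the large-$k$ regime) to get $|\expect_t Y_t| \lesssim d_k^{-1}\truncnorm{\eta}\smlnorm{f}_1$, which is absorbed in each of the remaining two pieces of $\sigma^2_{nmk}(f)$.

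\textbf{Tail sum.} For $s\geq1$, the tower property yields
\begin{equation*}
\expect_t \mg_{m,k,t+s}^{2} f = \expect_t(\expect_{t+s}Y_{t+s})^2 - \expect_t(\expect_{t+s-1}Y_{t+s})^2.
\end{equation*}
I would apply \lemref{fourinv} to each copy of $f(x_{t+s+k})$ after squaring, producing a double $(\lambda_1,\lambda_2)$-integral against $\hat{f}(\lambda_1)\hat{f}(\lambda_2)$, and decompose $x_{t+s+k}$ via \eqref{decomp1}--\eqref{decomp2} into three mutually independent components measurable with respect to $\filt_{-\infty}^t$, $\filt_{t+1}^{t+s-1}$ and $\filt_{t+s}^{t+s+k}$ respectively. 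The $\filt_{-\infty}^t$-component factors out of $\expect_t$ with unit modulus, and the difference $\expect_{t+s}-\expect_{t+s-1}$ acting on the exponential of (the third component) isolates the $\epsilon_{t+s}$ contribution as $[\e^{-\i\lambda a_k \epsilon_{t+s}}-\psi(-\lambda a_k)]\prod_{j=0}^{k-1}\psi(-\lambda a_j)$, leaving a clean product structure to be bounded by \lemref{covbnd}.

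\textbf{Case analysis.} The three pieces of $\sigma^2_{nmk}$ correspond to the three possible positions of $\eta_{t+s+k-m}$. For $k \leq \minidx$, no $\lambda$-decay is available from the independent component, so I would fall back on Parseval ($\smlnorm{\hat f}_2^2 \lesssim \smlnorm{f}_2^2$) together with the trivial $\smlnorm{\hat f}_\infty \leq \smlnorm{f}_1$ estimate, producing the $\smlnorm{f}_\infty^2$ and $\smlnorm{f}_2^2$ prefactors. For $\minidx < k \leq m$, $\eta_{t+s+k-m}$ lies in $\filt_{-\infty}^{t+s-1}$, so $\eta^2 \leq \truncnorm{\eta}^2$ may be pulled out; the intermediate $\filt_{t+1}^{t+s-1}$-component is handled by \lemref{covbnd}\enuref{cov2}, while \eqref{cov1a} (with $p=0$, $q=2$) delivers the $d_k^{-1}$ factor on the innermost piece. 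For $k > \minidx \pmax m$, $\eta_{t+s+k-m}$ sits inside $\filt_{t+s}^{t+s+k}$ and is correlated with $\epsilon_{t+s+k-m}$; the sharper bound \eqref{cov1b} (with $q=2$ and $F \asymp G^{p/\alpha}$ chosen from the $L^\cfidx$-integrability of $\psi$) delivers the $c_m^2 k^{-1} d_k^{-3}$ rate, together with the exponentially decaying low-frequency residual $\e^{-\gamma_1 k}$. Summing over $s$ through \lemref{summable}\enuref{dna} absorbs the $s$-dependence without producing extra factors of $n$.

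\textbf{Main obstacle.} The principal technical difficulty is the combinatorial book-keeping of where $\eta_{t+s+k-m}$ sits relative to the three-piece decomposition of $x_{t+s+k}$, together with the joint handling of $\eta_{t+s+k-m}$ and $\epsilon_{t+s+k-m}$ when they coincide in time: it is precisely this joint factor that forces the high-frequency/low-frequency splitting of the $\lambda$-integral built into \lemref{covbnd} and so generates the $\e^{-\gamma_1 k}$ residue in the large-$k$ bound. Ensuring that the resulting integrands remain within the $L^\cfidx$-integrability budget on $\psi$ is what dictates the specific choice $q=2$ throughout.
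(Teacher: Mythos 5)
Your overall architecture is essentially the paper's: the orthogonality identity $\expect_{t}\mg_{m,k,t+s}^{2}f=\expect_{t}(\expect_{t+s}Y_{t+s})^{2}-\expect_{t}(\expect_{t+s-1}Y_{t+s})^{2}$ is just another way of writing the paper's square-then-condition step (after Fourier inversion both routes produce the same integrand, your isolated $\epsilon_{t+s}$ factor becoming $\vartheta(\lambda_{1}a_{k},\lambda_{2}a_{k})$, which is bounded by \lemref{cfdiffprod} and fed into \lemref{covbnd}); the regime $k>\minidx\pmax m$ and the pointwise term $\smlnorm{\mg_{mkt}^{2}f}_{\infty}$ are then handled as in the paper. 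The genuine gap is in the summation over $s$, and hence in where the terms $\nseq_{m}\truncnorm{\eta}^{2}$ and $\nseq_{n}$ of $\sigma_{nmk}^{2}(f)$ in \eqref{signkdef} are supposed to come from. Each per-$s$ bound carries a factor $d_{s}^{-1}$ (or $(c_{s}/c_{k+s})d_{s}^{-1}$), and since $\{d_{s}^{-1}\}$ is regularly varying with index $-H\in(-1,0)$, the sum $\sum_{s\leq n}d_{s}^{-1}\asymp nd_{n}^{-1}=\nseq_{n}$ \emph{diverges}; it is controlled by Karamata's theorem (plus Potter's inequality for the ratio $c_{s}/c_{k+s}$), and this divergence is precisely what generates the factors $\nseq_{m}$ and $\nseq_{n}$. \lemref{summable}\enuref{dna} plays no role here -- it is used only for the later summation over $k$ in \lemref{dnsum} -- so your claim that the $s$-sum is absorbed ``without producing extra factors of $n$'' is false.

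This is not cosmetic, because of how you treat $\eta$ in the regime $\minidx<k\leq m$: you extract $\truncnorm{\eta}^{2}$ for \emph{every} $s$, which, once the $s$-sum is evaluated correctly, gives a bound of order $d_{k}^{-1}\nseq_{n}\truncnorm{\eta}^{2}\smlnorm f_{1}^{2}$. That is not $\lesssim\sigma_{nmk}^{2}(f)=d_{k}^{-1}(\nseq_{m}\truncnorm{\eta}^{2}+\nseq_{n})\smlnorm f_{1}^{2}$, since $\truncnorm{\eta}$ may grow like $n^{1/\etamom}$ while $\nseq_{m}$ stays bounded; and the \emph{additive} combination of $\truncnorm{\eta}$ and $\nseq_{n}^{1/2}$ in \eqref{delnf} is exactly what delivers the rate in \eqref{covorderest}. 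The missing idea is a split of the $s$-range at $s=m-k$: when $s\leq m-k$ the index $t+s+k-m$ of the $\eta$ does not exceed the conditioning time $t$, so the conditional expectation cannot average it out and one bounds it by $\truncnorm{\eta}$ -- but there are at most $m$ such terms, summing to $\nseq_{m}\truncnorm{\eta}^{2}$ by Karamata; when $s>m-k$ the $\eta$ lies strictly in the future of time $t$, its bounded second moment is taken instead, and the remaining (divergent) sum contributes only the unweighted $\nseq_{n}$. The same split is needed in the regime $k\leq\minidx$ with $\smlnorm f_{2}^{2}$. Two smaller points: \eqref{cov1a} is stated only for $m\in\{0,\ldots,k-1\}$, so it cannot be invoked when $k\leq m$ -- the paper handles that regime with no Fourier analysis at all, by iterated applications of \lemref{1stmom}\enuref{1stmom}; and the weight in \eqref{cov1b} comes from \lemref{cfdiffprod} (the expansion of $\psi$ near the origin), not from the $L^{\cfidx}$ integrability of $\psi$, whose role is to make the product of at least $\smlfloor{k/2}$ factors $\psi(-\lambda a_{l})$ integrable and hence to confine Fourier-based bounds to $k\geq\minidx+1$.
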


For the next result, recall the definition of $\delta_{n}(\Gset)$
given in \eqref{delnf} above.
\begin{lem}
\label{lem:dnsum} If $\Gset\subset\BI$, then
\[
\sum_{m=0}^{n}\smlabs{\theta_{m}}\sup_{f\in\Gset}\varsigma_{nm}(f)+\sum_{m=0}^{n}\sum_{k=0}^{n-1}\smlabs{\theta_{m}}\sup_{f\in\Gset}\sigma_{nmk}(f)\lesssim\delta_{n}(\Gset).
\]

\end{lem}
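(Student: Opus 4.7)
\textbf{Proof plan for \lemref{dnsum}.}
The claim is essentially a bookkeeping exercise: one takes suprema of \eqref{varsig} and the square root of \eqref{signkdef} over $f \in \Gset$, then sums over $m$ and over $k$, matching each resulting term against the four basic contributions on the right of \eqref{delnf}. All summations are controlled by \lemref{summable} together with the assumption $\sum_{m=0}^\infty \smlabs{\theta_m} m^{7/6} < \infty$ (which entails $\sum_m \smlabs{\theta_m} < \infty$ and $\sum_m \smlabs{\theta_m} \nseq_m^{1/2} < \infty$, since $\nseq_m$ is regularly varying of index $1-H \in (0,2/3)$).

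For the first sum, $\sup_{f \in \Gset} \varsigma_{nm}(f) \leq \truncnorm{\eta} \smlnorm{\Gset}_\infty + (\nseq_m \truncnorm{\eta} + c_m \nseq_n^{1/2}) \smlnorm{\Gset}_1$. Multiplying by $\smlabs{\theta_m}$ and summing, the term in $\smlnorm{\Gset}_\infty$ is bounded by $\truncnorm{\eta} \smlnorm{\Gset}_\infty \sum_m \smlabs{\theta_m} \lesssim \truncnorm{\eta} \smlnorm{\Gset}_\infty$, while \lemref{summable}\enuref{dth} handles the remaining two factors, giving a bound of order $(\truncnorm{\eta} + \nseq_n^{1/2}) \smlnorm{\Gset}_1$.

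For the second sum I would split the inner sum over $k$ into the three regimes of \eqref{signkdef} and take square roots, using $\sqrt{a+b}\leq\sqrt{a}+\sqrt{b}$.
\emph{Regime} $k \leq \minidx$ contributes only a fixed (finite) number of terms, and summing over $m$ yields a contribution $\lesssim \truncnorm{\eta} \smlnorm{\Gset}_\infty + (\truncnorm{\eta} + \nseq_n^{1/2}) \smlnorm{\Gset}_2$ by the same reasoning as above.
\emph{Regime} $\minidx+1 \leq k \leq m$: here $\sup_{f\in\Gset}\sigma_{nmk}(f) \lesssim d_k^{-1/2}(\nseq_m^{1/2}\truncnorm{\eta} + \nseq_n^{1/2})\smlnorm{\Gset}_1$, and since $d_k^{-1/2}$ is regularly varying of index $-H/2 > -1$, Karamata's theorem gives $\sum_{k=\minidx+1}^{m} d_k^{-1/2} \lesssim m d_m^{-1/2} = m^{1/2}\nseq_m^{1/2}$. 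The sum over $m$ then produces $\sum_m \smlabs{\theta_m} m^{1/2}\nseq_m \lesssim 1$ (from \lemref{summable}\enuref{dth}) and $\sum_m \smlabs{\theta_m} m^{1/2}\nseq_m^{1/2} \lesssim 1$ (bounding $\nseq_m^{1/2} \lesssim 1 + \nseq_m$), producing a contribution $\lesssim (\truncnorm{\eta} + \nseq_n^{1/2})\smlnorm{\Gset}_1$.
\emph{Regime} $k \geq \minidx \vee m+1$: here $\sup_{f\in\Gset}\sigma_{nmk}(f) \lesssim (k^{-1/2}d_k^{-3/2}c_m + \e^{-\gamma_1 k/2})\nseq_n^{1/2}\smlnorm{\Gset}_1$, and \lemref{summable}\enuref{dna} together with geometric summability bounds $\sum_k$ by $(c_m + 1)$. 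The outer sum is then controlled by $\sum_m \smlabs{\theta_m}(c_m + 1) \lesssim 1$ via \lemref{summable}\enuref{dth}, giving a contribution $\lesssim \nseq_n^{1/2}\smlnorm{\Gset}_1$.

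Assembling the four bounds yields $\lesssim \truncnorm{\eta}\smlnorm{\Gset}_\infty + (\truncnorm{\eta} + \nseq_n^{1/2})(\smlnorm{\Gset}_1 + \smlnorm{\Gset}_2) = \delta_n(\Gset)$, as required. The only real obstacle is the middle regime, where one must be careful that the Karamata estimate on $\sum_{k \leq m} d_k^{-1/2}$ produces exactly the factor $m^{1/2}\nseq_m^{1/2}$, which is precisely what is absorbed by the moment condition $\sum_m \smlabs{\theta_m} m^{1/2}\nseq_m < \infty$ of \lemref{summable}\enuref{dth}; everything else is straightforward dominated summation.
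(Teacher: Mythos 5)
Your proposal is correct and follows essentially the same route as the paper's proof: bound $\varsigma_{nm}$ and the square roots of the three regimes of $\sigma_{nmk}^2$ in \eqref{signkdef} separately, sum over $k$ via Karamata (noting $\sum_{k\le m}d_k^{-1/2}\lesssim m d_m^{-1/2}=m^{1/2}\nseq_m^{1/2}$) and geometric decay, and absorb the sums over $m$ using \lemref{summable}\enuref{dth} and \enuref{dna}. Your slightly more explicit handling of the $\e^{-\gamma_1 k}$ term and of $\sum_m\smlabs{\theta_m}m^{1/2}\nseq_m^{1/2}$ only spells out steps the paper leaves implicit.
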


The proofs of these results are given below. We first turn to the
\begin{proof}[Proof of \propref{covtruncord}]
 The proof is almost identical to the proof of \XREFmaxSn{} in \citet{Duffy14uniform}.
In view of \lemref{fundamental} and \lemref{dnsum}, we have immediately
that
\[
\max_{f\in\Fset_{n}}\abs{\sum_{m=0}^{n}\theta_{m}\N_{nm}f}\lesssim\sum_{m=0}^{n}\smlabs{\theta_{m}}\max_{f\in\Fset_{n}}\varsigma_{nm}(f)\lesssim\delta_{n}(\Fset_{n}),
\]
and through an application of \lemref{mgmax}, that 
\[
\max_{f\in\Fset_{n}}\abs{\sum_{m=0}^{n}\sum_{k=0}^{n-1}\theta_{m}\M_{nmk}f}\isbigop\delta_{n}(\Fset_{n})\log n
\]
whence the result follows from \eqref{Sndecomp} and \eqref{Snmdecomp}.
\end{proof}
 
\begin{proof}[Proof of \lemref{fundamental}]
 For \eqref{Nnbnd}, note that by \lemref{1stmom},
\[
\smlnorm{\expect_{0}f(x_{t})\eta_{t-m}}_{\infty}\lesssim\begin{cases}
\truncnorm{\eta}\smlnorm f_{\infty} & \text{if }t\in\{1,\ldots,\minidx\}\\
d_{t}^{-1}\truncnorm{\eta}\smlnorm f_{1} & \text{if }t\in\{\minidx+1,\ldots,m\}\\
d_{t}^{-2}c_{m}\smlnorm f_{1} & \text{if }t\in\{\minidx\pmax m+1,\ldots n\},
\end{cases}
\]
whence, by Karamata's theorem and \lemref{summable}\enuref{dm2},
\begin{align*}
\smlnorm{\N_{nm}f}_{\infty} & \leq\left(\sum_{t=1}^{\minidx}+\sum_{t=\minidx+1}^{m}+\sum_{t=\minidx\pmax m+1}^{n}\right)\smlnorm{\expect_{0}f(x_{t})\eta_{t-m}}_{\infty}\\
 & \lesssim\minidx\truncnorm{\eta}\smlnorm f_{\infty}+\left(\truncnorm{\eta}\sum_{t=\minidx+1}^{m}d_{t}^{-1}+c_{m}\sum_{t=\minidx\pmax m+1}^{n}d_{t}^{-2}\right)\smlnorm f_{1}\\
 & \lesssim\truncnorm{\eta}\smlnorm f_{\infty}+\left[\nseq_{m}\truncnorm{\eta}+c_{m}\nseq_{n}^{1/2}\right]\smlnorm f_{1}.
\end{align*}
\eqref{sqbnds} follows from \lemref{mgbnd} in exactly the manner
described in the proof of \XREFfundamental{} in \citet{Duffy14uniform}.
\end{proof}
 
\begin{proof}[Proof of \lemref{mgbnd}]
 The proof is similar to that of \XREFmgbnd{} in \citet{Duffy14uniform}.
Let $\minidxm\defeq\minidx\pmax m$. We shall obtain the requisite
bound for $\expect_{t}\mg_{m,k,t+s}^{2}f$ by providing a bound for
$\expect_{t-s}\mg_{mkt}^{2}f$ (for $s\in\{1,\ldots,t\}$) that depends
only on $m$, $k$ and $s$ (and \emph{not} $t$), separately considering
the cases where
\begin{enumerate}
\item $k\in\{\minidxm+1,\ldots,n-t\}$;
\item $k\in\{\minidx,\ldots,\minidxm\}$; and
\item $k\in\{0,\ldots,\minidx\}$.
\end{enumerate}

\paragraph*{(i)}

Recall the decomposition given in \eqref{decomp1} and \eqref{decomp2}
above, applied here to reduce $x_{t+k}$ to a sum of independent pieces,
\begin{align*}
x_{t+k} & =x_{0,t+k}^{\ast}+x_{1,t-1,t+k}^{\prime}+x_{t,t,t+k}^{\prime}+x_{t+1,t+k,t+k}^{\prime}\\
 & =x_{0,t+k}^{\ast}+x_{1,t-1,t+k}^{\prime}+a_{k}\epsilon_{t}+x_{t+1,t+k,t+k}^{\prime}
\end{align*}
with the convention that $x_{1,t-1,t+k}^{\prime}=0$ if $t=1$, so
that by Fourier inversion (\lemref{fourinv}),
\begin{align}
\mg_{mkt}f & =\expect_{t}f(x_{t+k})\eta_{t+k-m}-\expect_{t-1}f(x_{t+k})\eta_{t+k-m}\nonumber \\
 & =\frac{1}{2\pi}\int\hat{f}(\lambda)\e^{-\i\lambda x_{0,t+k}^{\ast}}\e^{-\i\lambda x_{1,t-1,t+k}^{\prime}}\label{eq:0enxikt}\\
 & \qquad\qquad\cdot\left[\e^{-\i\lambda a_{k}\epsilon_{t}}-\expect\e^{-\i\lambda a_{k}\epsilon_{t}}\right]\expect\eta_{t+k-m}\e^{-\i\lambda x_{t+1,t+k,t+k}^{\prime}}\diff\lambda.\nonumber 
\end{align}
Thence
\begin{align}
\mg_{mkt}^{2}f & =\frac{1}{(2\pi)^{2}}\iint\hat{f}(\lambda_{1})\hat{f}(\lambda_{2})\e^{-\i(\lambda_{1}+\lambda_{2})x_{0,t+k}^{\ast}}\e^{-\i(\lambda_{1}+\lambda_{2})x_{1,t-1,t+k}^{\prime}}\label{eq:0enxiktsq}\\
 & \qquad\qquad\qquad\cdot\left[\e^{-\i\lambda_{1}a_{k}\epsilon_{t}}-\expect\e^{-\i\lambda_{1}a_{k}\epsilon_{t}}\right]\left[\e^{-\i\lambda_{2}a_{k}\epsilon_{t}}-\expect\e^{-\i\lambda_{2}a_{k}\epsilon_{t}}\right]\nonumber \\
 & \qquad\qquad\qquad\cdot\expect\eta_{t+k-m}\e^{-\i\lambda_{1}x_{t+1,t+k,t+k}^{\prime}}\expect\eta_{t+k-m}\e^{-\i\lambda_{2}x_{t+1,t+k,t+k}^{\prime}}\diff\lambda_{1}\diff\lambda_{2}.\nonumber 
\end{align}
(Note that $\mg_{mkt}f$ is real-valued, so $\mg_{mkt}^{2}f=\smlabs{\mg_{mkt}f}^{2}=\mg_{mkt}f\cdot\abv{\mg_{mkt}f}=\mg_{mkt}f\cdot\mg_{mkt}f$.)

Now suppose $s\in\{k+1,\ldots,t\}$. Taking conditional expectations
on both sides of \eqref{0enxiktsq} gives
\begin{align*}
\expect_{t-s}\mg_{mkt}^{2}f & =\frac{1}{(2\pi)^{2}}\iint\hat{f}(\lambda_{1})\hat{f}(\lambda_{2})\e^{-\i(\lambda_{1}+\lambda_{2})x_{0,t+k}^{\ast}}\e^{-\i(\lambda_{1}+\lambda_{2})x_{1,t-s,t+k}^{\prime}}\\
 & \qquad\qquad\qquad\cdot\vartheta(\lambda_{1}a_{k},\lambda_{2}a_{k})\expect\e^{-\i(\lambda_{1}+\lambda_{2})x_{t-s+1,t-1,t+k}^{\prime}}\\
 & \qquad\qquad\qquad\cdot\expect\eta_{t+k-m}\e^{-\i\lambda_{1}x_{t+1,t+k,t+k}^{\prime}}\expect\eta_{t+k-m}\e^{-\i\lambda_{2}x_{t+1,t+k,t+k}^{\prime}}\diff\lambda_{1}\diff\lambda_{2},
\end{align*}
where we have defined
\[
\vartheta(z_{1},z_{2})\defeq\expect\left[\e^{-\i z_{1}\epsilon_{0}}-\expect\e^{-\i z_{1}\epsilon_{0}}\right]\left[\e^{-\i z_{1}\epsilon_{0}}-\expect\e^{-\i z_{2}\epsilon_{0}}\right]
\]
for $z_{1},z_{2}\in\reals$, and made the further decomposition
\[
x_{1,t-1,t+k}^{\prime}=x_{1,t-s,t+k}^{\prime}+x_{t-s+1,t-1,t+k}^{\prime}
\]
with the convention that $x_{1,t-s,t+k}^{\prime}=0$ if $s=t$. Thence,
using \eqref{coefcf} and \lemref{cfdiffprod}, and the inequalities
$\smlabs{\hat{f}(\lambda)}\leq\smlnorm f_{1}$ and $\smlabs{ab}\lesssim\smlabs a^{2}+\smlabs b^{2}$,
we obtain
\begin{align}
\expect_{t-s}\mg_{mkt}^{2}f & \lesssim\iint\smlabs{\hat{f}(\lambda_{1})\hat{f}(\lambda_{2})}\smlabs{\expect\e^{-\i(\lambda_{1}+\lambda_{2})x_{t-s+1,t-1,t+k}^{\prime}}}\label{eq:fors0case}\\
 & \qquad\qquad\cdot(\smlabs{a_{k}\lambda_{1}}^{\alpha}\tilde{G}(a_{k}\lambda_{1})\pmin1)^{1/2}(\smlabs{a_{k}\lambda_{2}}^{\alpha}\tilde{G}(a_{k}\lambda_{1})\pmin1)^{1/2}\nonumber \\
 & \qquad\qquad\cdot\smlabs{\expect\eta_{t+k-m}\e^{-\i\lambda_{1}x_{t+1,t+k,t+k}^{\prime}}}\smlabs{\expect\eta_{t+k-m}\e^{-\i\lambda_{2}x_{t+1,t+k,t+k}^{\prime}}}\diff\lambda_{1}\diff\lambda_{2}\nonumber \\
 & \lesssim\smlnorm f_{1}^{2}\int(\smlabs{a_{k}\lambda_{1}}^{\alpha}\tilde{G}(a_{k}\lambda_{1})\pmin1)\smlabs{\expect\eta_{t+k-m}\e^{-\i\lambda_{1}x_{t+1,t+k,t+k}^{\prime}}}^{2}\label{eq:0enxikt1}\\
 & \qquad\qquad\qquad\int\smlabs{\expect\e^{-\i(\lambda_{1}+\lambda_{2})x_{t-s+1,t-1,t+k}^{\prime}}}\diff\lambda_{2}\diff\lambda_{1},\nonumber 
\end{align}
where we have appealed to symmetry (in $\lambda_{1}$ and $\lambda_{2}$)
to reduce the final bound to a single term. By a change of variables
and \lemref{covbnd}\enuref{cov2}, 
\begin{align}
\int\smlabs{\expect\e^{-\i(\lambda_{1}+\lambda_{2})x_{t-s+1,t-1,t+k}^{\prime}}}\diff\lambda_{2} & =\int\smlabs{\expect\e^{-\i\lambda x_{t-s+1,t-1,t+k}^{\prime}}}\diff\lambda\lesssim\frac{c_{s}}{c_{k+s}}d_{s}^{-1},\label{eq:0enxikt2}
\end{align}
while \lemref{covbnd}\enuref{cov1} gives 
\begin{align}
\int(\smlabs{a_{k}\lambda_{1}}^{\alpha}\tilde{G}(a_{k}\lambda_{1})\pmin1)\smlabs{\expect\eta_{t+k-m}\e^{-\i\lambda_{1}x_{t+1,t+k,t+k}^{\prime}}}^{2}\diff\lambda & \lesssim k^{-1}d_{k}^{-3}c_{m}^{2}+\e^{-\gamma_{1}k}\label{eq:0enxikt3}
\end{align}
Together, \eqref{0enxikt1}--\eqref{0enxikt3}\noeqref{eq:0enxikt2}
yield
\begin{align}
\expect_{t-s}\mg_{mkt}^{2}f & \lesssim\frac{c_{s}}{c_{k+s}}d_{s}^{-1}(k^{-1}d_{k}^{-3}c_{m}^{2}+\e^{-\gamma_{1}k})\smlnorm f_{1}^{2}\label{eq:0enmkbnd1}
\end{align}

When $s\in\{1,\ldots,k\}$, \eqref{fors0case} continues to hold,
whence
\begin{align}
\expect_{t-s}\mg_{mkt}^{2}f & \lesssim\biggl(\int\smlabs{\hat{f}(\lambda)}(\smlabs{a_{k}\lambda_{1}}^{\alpha}\tilde{G}(a_{k}\lambda_{1})\pmin1)\smlabs{\expect\eta_{t+k-m}\e^{-\i\lambda x_{t+1,t+k,t+k}^{\prime}}}\diff\lambda\biggr)^{2}\nonumber \\
 & \lesssim\smlnorm f_{1}^{2}(k^{-1/2}d_{k}^{-2}c_{m}+\e^{-\gamma_{1}k})^{2}\nonumber \\
 & \lesssim d_{s}^{-1}(k^{-1}d_{k}^{-3}c_{m}^{2}+\e^{-\gamma_{1}k})\smlnorm f_{1}^{2}\label{eq:0enmkbnd1b}
\end{align}
by \lemref{covbnd}\enuref{cov1}; the replacement of a $d_{k}^{-1}$
by $d_{s}^{-1}$ in the final bound is justified because $s\leq k$.
Since $\{c_{k}\}$ is regularly varying and $k\geq\minidx+1$, it
follows from Potter's inequality \citep[Thm.~1.5.6(iii)]{Bingham87}
that
\begin{equation}
\sum_{s=1}^{k}d_{s}^{-1}+\sum_{s=k+1}^{n}\frac{c_{s}}{c_{k+s}}d_{s}^{-1}\lesssim\sum_{s=1}^{n}d_{s}^{-1}\lesssim nd_{n}^{-1}=\nseq_{n},\label{eq:0enssum}
\end{equation}
with the final bound following by Karamata's theorem. As noted above,
since the bounds \eqref{0enmkbnd1} and \eqref{0enmkbnd1b} do not
depend on $t$, they apply also to $\expect_{t}\mg_{m,k,t+s}^{2}f$.
Hence, in view of the preceding,
\begin{align*}
\sum_{s=1}^{n-k-t}\expect_{t}\mg_{m,k,t+s}^{2}f & \lesssim(k^{-1}d_{k}^{-3}c_{m}^{2}+\e^{-\gamma_{1}k})\smlnorm f_{1}^{2}\left[\sum_{s=1}^{k}d_{s}^{-1}+\sum_{s=k+1}^{n-k-t}\frac{c_{s}}{c_{k+s}}d_{s}^{-1}\right]\\
 & \lesssim(k^{-1}d_{k}^{-3}c_{m}^{2}+\e^{-\gamma_{1}k})\nseq_{n}\smlnorm f_{1}^{2}.
\end{align*}

Turning now to $\smlnorm{\mg_{mkt}^{2}f}_{\infty}$, note that \eqref{0enxikt}
still holds, with the convention that $x_{1,t-1,t+k}=0$ if $t=1$.
Thus, again by \lemref{covbnd}\enuref{cov1},
\begin{align}
\smlnorm{\mg_{mkt}^{2}f}_{\infty} & \lesssim\left(\int\smlabs{\hat{f}(\lambda)}\smlabs{\expect\eta_{t+k-m}\e^{-\i\lambda x_{t+1,t+k,t+k}^{\prime}}}\diff\lambda\right)^{2}\label{eq:0enxisupbnd}\\
 & \leq\smlnorm f_{1}^{2}\left(\int\smlabs{\expect\eta_{t+k-m}\e^{-\i\lambda x_{t+1,t+k,t+k}^{\prime}}}\diff\lambda\right)^{2}\nonumber \\
 & \lesssim(d_{k}^{-2}c_{m}+\e^{-\gamma_{1}k})^{2}\smlnorm f_{1}^{2}\nonumber \\
 & \lesssim(k^{-1}d_{k}^{-3}c_{m}^{2}+\e^{-\gamma_{1}k})\nseq_{n}\smlnorm f_{1}^{2};\label{eq:0enmkbnd2}
\end{align}
where the final bound follows because $k\leq n$, and so $d_{k}^{-1}\lesssim k^{-1}\nseq_{n}$.

\paragraph*{(ii)}

Note that $\{\minidx+1,\ldots,\minidxm\}$ can only be nonempty if
$\minidx<m=\minidxm$; thus $k\leq m$ in this case. Using $\smlabs{a+b}^{2}\lesssim\smlabs a^{2}+\smlabs b^{2}$
gives
\begin{align}
\expect_{t-s}\mg_{mkt}^{2}f & \lesssim\expect_{t-s}\left[\left(\expect_{t}\smlabs{f(x_{t+k})\eta_{t+k-m}}\right)^{2}+\left(\expect_{t-1}\smlabs{f(x_{t+k})\eta_{t+k-m}}\right)^{2}\right].\label{eq:k0m0p1}
\end{align}
Suppose $s\in\{m-k+1,\ldots,t\}$. Then by successive applications
of \lemref{1stmom}\enuref{1stmom},
\[
\expect_{t-s}\left(\expect_{t}\smlabs{f(x_{t+k})\eta_{t+k-m}}\right)^{2}\lesssim d_{s}^{-1}d_{k}^{-1}\smlnorm f_{1}^{2},
\]
and similarly for the second term on the right side of \eqref{k0m0p1}.
Thus
\begin{equation}
\expect_{t-s}\mg_{mkt}^{2}f\lesssim d_{s}^{-1}d_{k}^{-1}\smlnorm f_{1}^{2}.\label{eq:k0m0b1}
\end{equation}
When $s\in\{1,\ldots,m-k\}$, further applications of \lemref{1stmom}\enuref{1stmom}
give
\begin{align*}
\expect_{t-s}\left(\expect_{t}\smlabs{f(x_{t+k})\eta_{t+k-m}}\right)^{2} & \leq\truncnorm{\eta}^{2}\expect_{t-s}\left(\expect_{t}\smlabs{f(x_{t+k})}\right)^{2}\\
 & \lesssim d_{s}^{-1}d_{k}^{-1}\truncnorm{\eta}^{2}\smlnorm f_{1}^{2},
\end{align*}
whence
\begin{equation}
\expect_{t-s}\mg_{mkt}\lesssim d_{s}^{-1}d_{k}^{-1}\truncnorm{\eta}^{2}\smlnorm f_{1}^{2}.\label{eq:k0m0b2}
\end{equation}
Together, \eqref{k0m0b1} and \eqref{k0m0b2} give
\begin{align*}
\sum_{s=1}^{n-k-t}\expect_{t}\mg_{m,k,t+s}^{2}f & =\sum_{s=1}^{m-k}\expect_{t}\mg_{m,k,t+s}^{2}f+\sum_{s=m-k+1}^{n-k-t}\expect_{t}\mg_{m,k,t+s}^{2}f\\
 & \lesssim d_{k}^{-1}\left[\truncnorm{\eta}^{2}\sum_{s=1}^{m-k}d_{s}^{-1}+\sum_{s=m-k+1}^{n-k-t}d_{s}^{-1}\right]\smlnorm f_{1}^{2}\\
 & \lesssim d_{k}^{-1}\left[\nseq_{m}\truncnorm{\eta}^{2}+\nseq_{n}\right]\smlnorm f_{1}^{2}
\end{align*}
by Karamata's theorem.

Regarding $\smlnorm{\mg_{mkt}^{2}f}_{\infty}$, it follows from \lemref{1stmom}\enuref{1stmom}
that
\[
\left(\expect_{t}\smlabs{f(x_{t+k})\eta_{t+k-m}}\right)^{2}\leq\truncnorm{\eta}^{2}\left(\expect_{t}\smlabs{f(x_{t+k})}\right)^{2}\lesssim d_{k}^{-2}\truncnorm{\eta}^{2}\smlnorm f_{1}^{2}.
\]

\paragraph*{(iii)}

Suppose that $s\in\{(m-k)\pmax\minidx+1,\ldots,t\}$. Using $\smlabs{a+b}^{2}\lesssim\smlabs a^{2}+\smlabs b^{2}$,
Jensen's inequality and \lemref{1stmom}\enuref{1stmom}, we have
\[
\expect_{t-s}\mg_{mkt}^{2}f\lesssim\expect_{t-s}\smlabs{f(x_{t+k})\eta_{t+k-m}}^{2}\lesssim d_{s}^{-1}\smlnorm f_{2}^{2}.
\]
When $s\in\{\minidx+1,\ldots,m-k\}$, we have similarly
\[
\expect_{t-s}\mg_{mkt}^{2}f\lesssim\truncnorm{\eta}^{2}\expect_{t-s}\smlabs{f(x_{t+k})}^{2}\lesssim d_{s}^{-1}\truncnorm{\eta}^{2}\smlnorm f_{2}^{2},
\]
and for $s\in\{1,\ldots,\minidx\}$, we may use the crude bound
\begin{equation}
\expect_{t-s}\mg_{mkt}^{2}f\leq\smlnorm{\mg_{mkt}^{2}f}_{\infty}\leq\truncnorm{\eta}^{2}\smlnorm f_{\infty}^{2}.\label{eq:bndinf3}
\end{equation}
Hence
\begin{align*}
\sum_{s=1}^{n-k-t}\expect_{t}\mg_{m,k,t+s}^{2}f & =\left(\sum_{s=1}^{\minidx}+\sum_{s=\minidx+1}^{m-k}+\sum_{s=(m-k)\pmax\minidx+1}^{n-k-t}\right)\expect_{t}\mg_{m,k,t+s}^{2}f\\
 & \lesssim\minidx\truncnorm{\eta}^{2}\smlnorm f_{\infty}^{2}+\smlnorm f_{2}^{2}\left(\truncnorm{\eta}^{2}\sum_{s=\minidx+1}^{m-k}d_{s}^{-1}+\sum_{s=(m-k)\pmax\minidx+1}^{n-k-t}d_{s}^{-1}\right)\\
 & \lesssim\truncnorm{\eta}^{2}\smlnorm f_{\infty}^{2}+\left[\nseq_{m}\truncnorm{\eta}^{2}+\nseq_{n}\right]\smlnorm f_{2}^{2}
\end{align*}
by Karamata's theorem. The required bound for $\smlnorm{\mg_{mkt}^{2}f}_{\infty}$
is given in \eqref{bndinf3}.
\end{proof}
 
\begin{proof}[Proof of \lemref{dnsum}]
 It is evident from \eqref{varsig} and \lemref{summable}\enuref{dth}
that
\begin{align}
\sum_{m=0}^{n}\smlabs{\theta_{m}}\sup_{f\in\Gset}\varsigma_{nm}(f) & \leq\truncnorm{\eta}\smlnorm{\Gset}_{\infty}+\left[\truncnorm{\eta}\sum_{m=0}^{n}\smlabs{\theta_{m}}\nseq_{m}+\nseq_{n}^{1/2}\sum_{m=0}^{n}\smlabs{\theta_{m}}c_{m}\right]\smlnorm{\Gset}_{1}\nonumber \\
 & \lesssim\truncnorm{\eta}\smlnorm{\Gset}_{\infty}+\left[\truncnorm{\eta}+\nseq_{n}^{1/2}\right]\smlnorm{\Gset}_{1}\label{eq:dnsump1}
\end{align}
Considering the three parts of \eqref{signkdef} separately, we first
have
\begin{align}
\sum_{m=0}^{n}\sum_{k=0}^{\minidx}\smlabs{\theta_{m}}\sup_{f\in\Gset}\sigma_{nmk}(f) & \lesssim\truncnorm{\eta}\smlnorm{\Gset}_{\infty}+\left[\truncnorm{\eta}\sum_{m=0}^{n}\smlabs{\theta_{m}}\nseq_{m}^{1/2}+\nseq_{n}^{1/2}\right]\smlnorm{\Gset}_{2}\nonumber \\
 & \lesssim\truncnorm{\eta}\smlnorm{\Gset}_{\infty}+\left[\truncnorm{\eta}+\nseq_{n}^{1/2}\right]\smlnorm{\Gset}_{2}\label{eq:dnsump2}
\end{align}
since $\minidx$ is fixed and finite. Next,
\begin{align}
\sum_{m=0}^{n}\sum_{k=\minidx+1}^{m}\smlabs{\theta_{m}}\sup_{f\in\Gset}\sigma_{nmk}(f) & \lesssim\sum_{m=0}^{n}\smlabs{\theta_{m}}\left[\nseq_{m}^{1/2}\truncnorm{\eta}+\nseq_{n}^{1/2}\right]\smlnorm{\Gset}_{1}\sum_{k=\minidx+1}^{m}d_{k}^{-1/2}\nonumber \\
 & \lesssim\sum_{m=0}^{n}\smlabs{\theta_{m}}m^{1/2}\nseq_{m}^{1/2}\left[\nseq_{m}^{1/2}\truncnorm{\eta}+\nseq_{n}^{1/2}\right]\smlnorm{\Gset}_{1}\nonumber \\
 & \lesssim\left[\truncnorm{\eta}+\nseq_{n}^{1/2}\right]\smlnorm{\Gset}_{1},\label{eq:dnsump3}
\end{align}
using Karamata's theorem and \lemref{summable}\enuref{dth}. Finally,
\begin{align}
\sum_{m=0}^{n}\sum_{k=m+1}^{n}\smlabs{\theta_{m}}\sup_{f\in\Gset}\sigma_{nmk}(f) & \lesssim\sum_{m=0}^{n}\smlabs{\theta_{m}}c_{m}\nseq_{n}^{1/2}\smlnorm{\Gset}_{1}\sum_{k=m+1}^{n}k^{-1/2}d_{k}^{-3/2}\nonumber \\
 & \lesssim\nseq_{n}^{1/2}\smlnorm{\Gset}_{1}\label{eq:dnsump4}
\end{align}
by parts \enuref{dna} and \enuref{dth} of \lemref{summable}. Recalling
\eqref{delnf}, the result now follows from \eqref{dnsump1}--\eqref{dnsump4}.\noeqref{eq:dnsump2,eq:dnsump3}
\end{proof}

\section{Proofs of the order estimates\label{sec:orderest}}

The proof of \eqref{covorderest} in \thmref{orderest} may be broken
into three parts:
\begin{enumerate}[label=(\alph*)]
\item \label{enu:trunc}a truncation argument permits $\{u_{t}\}$ to be
replaced by $\{u_{t}^{(\leq)}\}$ on the left side of \eqref{covorderest};
\item \label{enu:reduct}the supremum over $(a,h)\in\reals\times\Hspc_{n}$
is reduced to a maximum over a (growing) finite set; and
\item \label{enu:finitemax}an application of \propref{covtruncord} yields
the requisite bound over this finite set.
\end{enumerate}
These steps are described below, following which we provide details
of the modifications necessary for the proof of \eqref{0enorderest}.

\subsection{Truncation\label{sub:truncation}}

Decomposing
\begin{align*}
u_{t} & =\sum_{k=0}^{\infty}\theta_{k}\eta_{t-k}=\sum_{k=0}^{n}\theta_{k}\eta_{t-k}+\sum_{k=n+1}^{\infty}\theta_{k}\eta_{t-k}\eqdef u_{t}^{(n)}+u_{t}^{(-n)}
\end{align*}
we have by the Cauchy-Schwarz inequality that 
\begin{multline*}
\sup_{a\in\reals}\abs{\frac{1}{(e_{n}h_{n})^{1/2}}\sum_{t=1}^{n}f\left(\frac{x_{t}-d_{n}a}{h_{n}}\right)u_{t}^{(-n)}}\\
\leq\left(\sup_{a\in\reals}\frac{1}{(e_{n}h_{n})^{1/2}}\sum_{t=1}^{n}f^{2}\left(\frac{x_{t}-d_{n}a}{h_{n}}\right)\right)^{1/2}\left(\sum_{t=1}^{n}(u_{t}^{(-n)})^{2}\right)^{1/2}=o_{p}(1)
\end{multline*}
since the first term on the RHS is $O_{p}(1)$ by \propref{generalIP},
and
\[
\expect\sum_{t=1}^{n}(u_{t}^{(-n)})^{2}\lesssim n\sum_{j=n+1}^{\infty}\theta_{j}^{2}\leq\sum_{j=n+1}^{\infty}\theta_{j}^{2}j\goesto0.
\]
Next, define for $-n\leq t\leq n$, 
\begin{align}
\eta_{t}^{(\leq)} & \defeq\eta_{t}\indic\{\smlabs{\eta_{t}}\leq n^{1/\etamom}\}-\expect\eta_{0}\indic\{\smlabs{\eta_{0}}\leq n^{1/\etamom}\}\label{eq:etatrunc}
\end{align}
and $\eta_{t}^{(>)}\defeq\eta_{t}-\eta_{t}^{(\leq)}.$ Then setting
\begin{align}
u_{t}^{(\leq)} & \defeq\sum_{k=0}^{n}\theta_{k}\eta_{t-k}^{(\leq)} & u_{t}^{(>)} & \defeq\sum_{k=0}^{n}\theta_{k}\eta_{t-k}^{(>)}\label{eq:utrunc}
\end{align}
we see that
\begin{align*}
\Prob\left\{ \sup_{a\in\reals}\abs{\sum_{t=1}^{n}f\left(\frac{x_{t}-d_{n}a}{h_{n}}\right)u_{t}^{(>)}}\neq0\right\}  & \leq\Prob\left\{ \sup_{t\leq n}\smlabs{u_{t}^{(>)}}\neq0\right\} \leq\Prob\left\{ \max_{-n\leq t\leq n}\smlabs{\eta_{t}}>n^{1/\etamom}\right\} =o(1),
\end{align*}
where the final equality follows by Theorem~2.12.1 in \citet{Hansen12mimeo},
since $\{\eta_{t}\}$ is i.i.d.\ with bounded $\etamom$th moment.
\eqref{covorderest} will therefore follow once we have shown that
\begin{equation}
\frac{1}{(\nseq_{n}h_{n})^{1/2}}\sup_{a\in\reals}\abs{\sum_{t=1}^{n}f\left(\frac{x_{t}-d_{n}a}{h_{n}}\right)u_{t}^{(\leq)}}\isbigop(1+n^{1/\etamom-\bandmax})\log n.\label{eq:covorderest2}
\end{equation}

\subsection{Reduction to the maximum over a finite set\label{sub:finitemax}}

For the remainder of the proof, we may without loss of generality
take $f$ to be bounded by unity, with a Lipschitz constant of unity.
To simplify the exposition, we shall require that $h_{n}\in\Hspc_{n}$
always, and take $\abv h=1$; the proof in the general case (where
this occurs w.p.a.1) requires no new ideas. As it is less cumbersome
to work with the inverse bandwidth $b\defeq h^{-1}$, we define 
\[
\Bspc_{n}\defeq\{h^{-1}\mid h\in\Hspc_{n}\}=[1,\abv b_{n}]
\]
where $\abv b_{n}\defeq\blw h_{n}^{-1}$. For $(a,b)\in\reals\times\reals_{+}$,
let $f_{(a,b)}(x)\defeq b^{1/2}f[b(x-d_{n}a)]$. Then
\[
\frac{1}{(\nseq_{n}h)^{1/2}}\sum_{t=1}^{n}f\left(\frac{x_{t}-d_{n}a}{h}\right)u_{t}^{(\leq)}=\frac{1}{\nseq_{n}^{1/2}}\sum_{t=1}^{n}f_{(a,b)}(x_{t})u_{t}^{(\leq)}\eqdef\R_{n}^{f}(a,b)
\]
for $b=h^{-1}$.

Take $C_{n}\defeq[-n^{\gamma},n^{\gamma}]\times\Bspc_{n}$, and let
$\Cspc_{n}\subset C_{n}$ be a lattice of mesh $n^{-\delta}$. $p_{n}(a,b)$
denotes the projection of $(a,b)$ onto a nearest neighbour in $\Cspc_{n}$
(with some tie-breaking rule). We shall now prove that $\gamma$ and
$\delta$ may be chosen (sufficiently large) such that
\begin{gather}
\sup_{(a,b)\in C_{n}}\smlabs{\R_{n}^{f}(a,b)}=\sup_{(a,b)\in\Cspc_{n}}\smlabs{\R_{n}^{f}(a,b)}+o_{p}(1)\label{eq:reduct1}\\
\sup_{(a,b)\in[-n^{\gamma},n^{\gamma}]^{c}\times\Bspc_{n}}\smlabs{\R_{n}^{f}(a,b)}=o_{p}(1),\label{eq:reduct2}
\end{gather}
with the aid of the following.
\begin{lem}
\label{lem:gridapprox}For every $\gamma>0$, there exists a $\delta>0$
such that
\[
\sup_{(a,b)\in C_{n}}\frac{1}{\nseq_{n}^{1/2}}\sum_{t=1}^{n}\smlabs{f_{(a,b)}(x_{t})-f_{p_{n}(a,b)}(x_{t})}\smlabs{u_{t}^{(\leq)}}=o_{p}(1).
\]

\end{lem}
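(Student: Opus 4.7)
The plan is to obtain a pointwise Lipschitz estimate of the map $(a,b)\mapsto f_{(a,b)}(x)$ whose constant grows only polynomially in $n$, combine it with the deterministic bound $\max_{t}\smlabs{u_{t}^{(\leq)}}\lesssim n^{1/\etamom}$ afforded by the truncation \eqref{etatrunc}, and control the number of $x_{t}$'s in a window of $O(1)$ width about $d_{n}a$ using \propref{generalIP}.

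Since $f$ is bounded by $1$, Lipschitz with constant $1$, and supported in $[-c_{K},c_{K}]$, direct computation of the partial derivatives of $f_{(a,b)}(x)=b^{1/2}f(b(x-d_{n}a))$ --- or, when $f$ is only Lipschitz, of the associated Lipschitz constants in each argument --- gives $\smlabs{\partial_{a}f_{(a,b)}(x)}\leq b^{3/2}d_{n}$ and $\smlabs{\partial_{b}f_{(a,b)}(x)}\lesssim b^{-1/2}$ on the slab $\{\smlabs{x-d_{n}a}\leq c_{K}/b\}$, and both vanishing off this slab. Integrating along the straight-line segment from $(a,b)$ to $p_{n}(a,b)\in\Cspc_{n}$, whose length is at most $\sqrt{2}\,n^{-\delta}$, and using $b\in[1,\abv b_{n}]$, one obtains
\[
\smlabs{f_{(a,b)}(x_{t})-f_{p_{n}(a,b)}(x_{t})}\lesssim\abv b_{n}^{3/2}d_{n}n^{-\delta}\cdot\indic\{\smlabs{x_{t}-d_{n}a}\leq c_{K}+d_{n}n^{-\delta}\},
\]
the indicator collecting the union of the supports swept out during the interpolation. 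Taking $\delta>H$ forces $d_{n}n^{-\delta}=o(1)$, so for $n$ sufficiently large the indicator set is contained in $\{\smlabs{x_{t}-d_{n}a}\leq2c_{K}\}$.

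Combined with $\max_{t}\smlabs{u_{t}^{(\leq)}}\lesssim n^{1/\etamom}$ (which is immediate from $\sum_{k}\smlabs{\theta_{k}}<\infty$ and the definition of $\eta_{t}^{(\leq)}$), the sum in question is dominated by $C\abv b_{n}^{3/2}d_{n}n^{1/\etamom-\delta}\cdot N_{n}(a)$, where $N_{n}(a)\defeq\sum_{t=1}^{n}\indic\{\smlabs{x_{t}-d_{n}a}\leq2c_{K}\}$. Approximating this indicator from above by a function in $\BIl$ and applying \propref{generalIP} with fixed bandwidth $h\equiv2c_{K}$ yields $N_{n}(a)/\nseq_{n}\wkc4c_{K}\loctime(a)$ in $\ell_{\ucc}(\reals)$; and since $d_{n}^{-1}\max_{t}\smlabs{x_{t}}$ is stochastically bounded, $N_{n}(a)=0$ outside a random bounded set of $a$'s. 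Hence $\sup_{a\in[-n^{\gamma},n^{\gamma}]}N_{n}(a)\isbigop\nseq_{n}$, uniformly in the choice of $\gamma$.

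Substituting $\abv b_{n}\lesssim\nseq_{n}n^{-2\bandmax}$ from \assref{estim}(ii), together with $d_{n}\nseq_{n}=n$ and $\nseq_{n}\asymp n^{1-H}$ (up to slowly varying factors), yields
\[
\sup_{(a,b)\in C_{n}}\frac{1}{\nseq_{n}^{1/2}}\sum_{t=1}^{n}\smlabs{f_{(a,b)}(x_{t})-f_{p_{n}(a,b)}(x_{t})}\smlabs{u_{t}^{(\leq)}}\isbigop n^{2-H-3\bandmax+1/\etamom-\delta},
\]
which is $o_{p}(1)$ for any fixed $\delta>2-H-3\bandmax+1/\etamom$. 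The only real obstacle is bookkeeping: one must verify that the polynomially-growing Lipschitz constant $\abv b_{n}^{3/2}d_{n}$ is overcome by the mesh $n^{-\delta}$, after the count $N_{n}(a)$ --- taken over a window of \emph{constant} width, despite $b$ ranging over $[1,\abv b_{n}]$ --- has been reduced to $\nseq_{n}$. The parameter $\gamma$ enters only through the cardinality of $\Cspc_{n}$, which will matter in the subsequent finite-maximum step \eqref{reduct1}, not in the pointwise approximation handled here; so a single $\delta$ works for every $\gamma>0$.
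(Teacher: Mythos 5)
Your overall route (a parameter-Lipschitz bound of order $\abv b_{n}^{3/2}d_{n}$, the truncation bound $\max_{t}\smlabs{u_{t}^{(\leq)}}\lesssim n^{1/\etamom}$, a polynomial mesh beaten by taking $\delta$ large) is the right one, and is essentially what the paper does -- its own proof is a one-line deferral to the corresponding grid-approximation lemma in \citet{Duffy14uniform}, noting only the $n^{1/\etamom}$ bound. However, one step of your argument, as written, fails. To bound the occupation count $N_{n}(a)=\sum_{t=1}^{n}\indic\{\smlabs{x_{t}-d_{n}a}\leq2c_{K}\}$ uniformly over $\smlabs a\leq n^{\gamma}$ you invoke \propref{generalIP} (which gives convergence only in $\ell_{\ucc}(\reals)$, i.e.\ uniformly on \emph{compacta} in $a$) and then dispose of large $\smlabs a$ by asserting that $d_{n}^{-1}\max_{t\leq n}\smlabs{x_{t}}$ is stochastically bounded. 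That assertion is false under the paper's assumptions: in case \enuref{AP} of \assref{reg}\enuref{regproc} with $\alpha\in(0,2)$ the limit LFSM $X$ has a.s.\ unbounded sample paths (as the paper itself notes in \remref{exog}), and since $X_{n}\fdd X$ one gets $\Prob\{d_{n}^{-1}\max_{t\leq n}\smlabs{x_{t}}>M\}\goesto1$ for every $M$; so the random set of $a$'s carrying mass is not tight, and the reduction to a fixed compactum -- hence the claim $\sup_{\smlabs a\leq n^{\gamma}}N_{n}(a)\isbigop\nseq_{n}$ -- is not justified by the argument you give.

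The good news is that this refinement is unnecessary, and dropping it repairs the proof immediately: use the trivial bound $N_{n}(a)\leq n$. Then your estimate becomes $\nseq_{n}^{-1/2}\abv b_{n}^{3/2}d_{n}n^{1+1/\etamom-\delta}\lesssim n^{2+1/\etamom-3\bandmax-\delta}$ (using $\abv b_{n}\lesssim\nseq_{n}n^{-2\bandmax}$ and $d_{n}\nseq_{n}=n$, with slowly varying factors absorbed by slightly enlarging $\delta$), which is $o(1)$ once $\delta>2+1/\etamom$; this is still a fixed polynomial rate, independent of $\gamma$ (which, as you correctly observe, only affects $\#\Cspc_{n}$ and hence the later finite-maximum step, where polynomial cardinality is all that \propref{covtruncord} requires). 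Since the lemma only asserts the \emph{existence} of some $\delta>0$, nothing is lost by this cruder choice; the local-time counting you attempted would only be needed if one wanted the smallest admissible $\delta$, and even then it would have to be run through a genuinely uniform-in-$a$ version of the signal bound rather than the $\ell_{\ucc}$ statement of \propref{generalIP} plus tightness of $d_{n}^{-1}\max_{t}\smlabs{x_{t}}$.
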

 
\begin{lem}
\label{lem:liptail}Suppose $f\in\BIl$. Then $\smlabs{f(x)}=o(\smlabs x^{-1/2})$
as $x\goesto\pm\infty$.
\end{lem}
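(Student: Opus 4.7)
The plan is proof by contradiction, exploiting the tension between the local rigidity imposed by Lipschitz continuity and the global mass constraint provided by integrability.

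Suppose, toward a contradiction, that $|f(x)| \neq o(|x|^{-1/2})$ as $x \to \infty$ (the $x\to-\infty$ case is symmetric by reflection). Then there exist $c>0$ and a sequence $x_n \to \infty$ with $|f(x_n)| \geq c\, x_n^{-1/2}$. Writing $L$ for the Lipschitz constant of $f$ and setting $r_n \defeq |f(x_n)|/(2L)$, the reverse triangle inequality gives $|f(y)| \geq |f(x_n)|/2$ throughout $I_n \defeq [x_n - r_n,\, x_n+r_n]$. Integrating,
\[
\int_{I_n} |f| \;\geq\; r_n\, |f(x_n)| \;=\; \frac{|f(x_n)|^2}{2L} \;\geq\; \frac{c^2}{2L\, x_n}.
\]
Since $r_n = O(x_n^{-1/2}) \to 0$, by passing to a subsequence with $x_{n+1}-x_n \to \infty$ the intervals $\{I_n\}$ may be taken pairwise disjoint, and a contradiction with $\smlnorm{f}_1 < \infty$ would follow if $\sum_n x_n^{-1} = \infty$.

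The main obstacle lies in this divergence: the failure of $o(|x|^{-1/2})$ only guarantees that the set $S_c \defeq \{x : |f(x)| \geq c|x|^{-1/2}\}$ is unbounded, not that it is dense enough to force $\sum x_n^{-1} = \infty$ (for instance, $S_c \subseteq \{n^2\}_{n \geq 1}$ would give $\sum x_n^{-1} < \infty$, blocking the argument). To close this gap I would pass to a dyadic decomposition: with $J_k \defeq [2^k, 2^{k+1}]$ and $a_k \defeq \sup_{J_k}|f|$, the local estimate above applied to a near-maximiser on $J_k$ yields $a_k^2/(2L) \lesssim \int_{J_{k-1} \cup J_k \cup J_{k+1}} |f|$, so summing gives $\sum_k a_k^2 \lesssim L \smlnorm{f}_1 < \infty$ and in particular $a_k \to 0$. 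The genuine difficulty is upgrading this to the dyadic form of the conclusion, $2^{k/2} a_k \to 0$: finiteness of $\sum_k a_k^2$ is far from implying finiteness of $\sum_k 2^k a_k^2$.

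Bridging this last step is where I anticipate the principal work. I would attempt a sharper propagation argument using Lipschitz continuity across dyadic blocks, possibly combined with an induction on $k$ exploiting the fact that the $a_k$ must be attained at points where $|f|$ cannot simultaneously be large on neighbouring blocks (again by the mass constraint). Failing this, the result likely relies on an auxiliary hypothesis implicit in the paper's use of $\BIl$, such as $\int |y|\, |f(y)| \diff y < \infty$; under such a condition, combining $|f(x)|^2 \leq 2L \int_x^{\infty} |f|$ with the bound $x \int_x^{\infty} |f| \leq \int_x^{\infty} y |f(y)| \diff y \to 0$ immediately yields $x|f(x)|^2 \to 0$, closing the argument.
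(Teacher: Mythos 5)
Your attempt is not a complete proof of the lemma as stated, and you say so yourself; but your diagnosis of where it sticks is exactly right, and the missing step cannot be supplied by anyone. With $\BIl$ defined as in this paper (bounded, integrable, Lipschitz, and nothing more), the conclusion $\smlabs{f(x)}=o(\smlabs x^{-1/2})$ is in fact false: take disjoint triangular bumps of slope $\pm1$ centred at $x_k=2^k$ with heights $k^{-1}$. The resulting $f$ is bounded, $1$-Lipschitz and integrable (its integral is $\sum_k k^{-2}<\infty$), yet $\smlabs{f(2^k)}\,2^{k/2}=2^{k/2}/k\goesto\infty$. This is precisely the configuration that your dyadic bound $\sum_k a_k^2\lesssim L\smlnorm f_1$ allows, so no ``sharper propagation argument'' will upgrade it to $2^{k/2}a_k\goesto0$; your instinct that an auxiliary hypothesis is indispensable is correct. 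On the positive side, your local estimate $\smlabs{f(x)}^2\lesssim L\int_{I_x}\smlabs f$ is the right tool (it already gives $f(x)\goesto0$), and your conditional argument --- under $\int\smlabs{yf(y)}\diff y<\infty$, combining it with $x\int_x^{\infty}\smlabs f\leq\int_x^{\infty}\smlabs{yf(y)}\diff y\goesto0$ --- is a correct and complete proof of the stated conclusion under that extra moment condition.

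As for the comparison you were asked to make: the paper gives no argument for this lemma at all; it is dispatched in one clause as ``a special case'' of Lemma~B.1 in the Supplement to \citet{Duffy14uniform}, so there is no in-text proof to match yours against. What your analysis (sharpened by the counterexample above) shows is that whatever is proved there must rest on more than membership of $\BIl$ as defined here --- a tail condition such as $\int\smlabs{xf(x)}\diff x<\infty$, eventual monotonicity of a dominating function, or compact support; note that the kernels to which the rate results are ultimately applied ($K$ and its products $K^{[p]}$) are compactly supported, so any of these readings suffices for the paper's use of the lemma, and your fallback argument (or, for eventually monotone integrable $f$, the elementary bound $xf(x)\goesto0$) then closes the proof. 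In short: you correctly isolated a genuine gap, but the gap lies in the lemma's stated hypotheses rather than in a step you could have repaired, and your proposal, read as a proof under the additional first-moment condition, is sound.
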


Since $\smlnorm{u_{t}^{(\leq)}}_{\infty}\lesssim n^{1/\etamom}$,
\lemref{gridapprox} may be proved by an argument identical to that
used in the proof of \XREFgridapprox{} in \citet{Duffy14uniform},
while \lemref{liptail} is a special case of \XREFliptail{} in the
Supplement to that paper. Observe that \eqref{reduct1} follows immediately
from \lemref{gridapprox}. To establish \eqref{reduct2}, first note
that w.p.a.1,
\[
\inf_{t\leq n}\inf_{\smlabs a\geq n^{\gamma}}\smlabs{x_{t}-d_{n}a}\geq d_{n}n^{\gamma}\left(1-n^{-\gamma}d_{n}^{-1}\max_{t\leq n}\smlabs{x_{t}}\right)=d_{n}n^{\gamma}(1+o_{p}(1)),
\]
provided that $\gamma$ is chosen large enough that
\[
\expect\max_{t\leq n}\smlabs{x_{t}}\leq n^{2}\expect\smlabs{v_{0}}=o(n^{\gamma}d_{n}).
\]
For the proof that such a $\gamma$ exists, see the arguments following
\XREFlipobj{} in the Supplement to \citet{Duffy14uniform}. Thence
by \lemref{liptail}
\begin{align*}
\max_{t\leq n}\sup_{(a,b)\in[-n^{\gamma},n^{\gamma}]^{c}\times\Bspc_{n}}b^{1/2}f[b(x_{t}-d_{n}a)] & \lesssim\max_{t\leq n}\sup_{\smlabs a\geq n^{\gamma}}\smlabs{x_{t}-d_{n}a}^{-1/2}\\
 & \isbigop(d_{n}n^{\gamma})^{-1/2},
\end{align*}
whence
\begin{align*}
\sup_{(a,b)\in[-n^{\gamma},n^{\gamma}]^{c}\times\Bspc_{n}}\smlabs{\R_{n}^{f}(a,b)} & \leq\sup_{(a,b)\in[-n^{\gamma},n^{\gamma}]^{c}\times\Bspc_{n}}\frac{1}{\nseq_{n}^{1/2}}\sum_{t=1}^{n}\smlabs{f_{(a,b)}(x_{t})}\smlabs{u_{t}^{(\leq)}}\\
 & \isbigop\left(\frac{n^{2/\etamom}}{\nseq_{n}d_{n}n^{\gamma}}\right)^{1/2}\\
 & =o(1)
\end{align*}
for $\gamma>0$ sufficiently large.

\subsection{Control over the finite set}

It remains to provide an estimate for
\begin{align*}
\sup_{(a,b)\in\Cspc_{n}}\smlabs{\R_{n}^{f}(a,b)} & =\frac{1}{\nseq_{n}^{1/2}}\sup_{(a,b)\in\Cspc_{n}}\abs{\sum_{t=1}^{n}f_{(a,b)}(x_{t})u_{t}^{(\leq)}}=\frac{1}{\nseq_{n}^{1/2}}\sup_{g\in\Gset_{n}}\smlabs{\S_{n}g},
\end{align*}
where $\Gset_{n}\defeq\{f_{(a,b)}\mid(a,b)\in\Cspc_{n}\}$, and $\S_{n}g$
is defined as in \eqref{Snf} above. Since $f\in\BI$ and $\abv b_{n}\lesssim\nseq_{n}n^{-2\bandmax}$,
it is clear that
\begin{align*}
\delta_{n}(\Gset_{n}) & \lesssim\truncnorm{\eta}\abv b_{n}^{1/2}+[\truncnorm{\eta}+\nseq_{n}^{1/2}]=O[\nseq_{n}^{1/2}(1+n^{1/\etamom-\bandmax})]
\end{align*}
and thus
\[
\frac{1}{\nseq_{n}^{1/2}}\sup_{g\in\Gset_{n}}\smlabs{\S_{n}g}\isbigop(1+n^{1/\etamom-\bandmax})\log n
\]
by \propref{covtruncord}. This completes the proof of \eqref{covorderest}.

\subsection{Modifications required for the proof of \eqref{0enorderest}}

The proof of \eqref{0enorderest} is almost identical to the preceding,
albeit somewhat simpler. The truncation performed in \subref{truncation}
is not necessary, while the same argument as given in \subref{finitemax}
may be used to reduce the problem to that of providing a suitable
bound for
\[
\frac{1}{\nseq_{n}^{1/2}}\sup_{g\in\Gset_{n}}\smlabs{\S_{n}^{\ast}g}
\]
where $\S_{n}^{\ast}g\defeq e_{n}^{-1/2}\sum_{t=1}^{n}g(x_{t})$.
Define
\begin{align*}
\delta_{n}^{\ast}(\Fset) & \defeq\smlnorm{\Fset}_{\infty}+\nseq_{n}^{1/2}(\smlnorm{\Fset}_{1}+\smlnorm{\Fset}_{2})+\left[\sum_{k=1}^{n}d_{k}^{-2}+\nseq_{n}^{1/2}\sum_{k=1}^{n-1}k^{-1/2}d_{k}^{-3/2}\right]\smlalph{\Fset}1\\
 & \lesssim\smlnorm{\Fset}_{\infty}+\nseq_{n}^{1/2}(\smlnorm{\Fset}_{1}+\smlnorm{\Fset}_{2}+\smlalph{\Fset}1)
\end{align*}
where $\smlalph f1\defeq\inf\{c\in\reals_{+}\mid\smlabs{\hat{f}(\lambda)}\leq c\smlabs{\lambda}\}$,
and the final bound follows from parts~\enuref{dm2} and \enuref{dna}
of \lemref{summable}. With the aid of \XREFalphzeroen{} in \citet{Duffy14uniform},
it is easily verified that
\begin{align*}
\smlnorm{\Gset_{n}}_{1}\pmax\smlnorm{\Gset_{n}}_{2}\pmax\smlalph{\Gset_{n}}1 & \lesssim1 & \smlnorm{\Gset_{n}}_{\infty} & \lesssim\abv b_{n}^{1/2}=o(\nseq_{n}^{1/2}),
\end{align*}
whence by \XREFmaxSnb{} in \citet{Duffy14uniform}, 
\[
\frac{1}{\nseq_{n}^{1/2}}\sup_{g\in\Gset_{n}}\smlabs{\S_{n}^{\ast}g}\isbigop\nseq_{n}^{-1/2}\delta_{n}^{\ast}(\Gset_{n})\log n\lesssim\log n
\]
as required.

\section{Proofs of the convergence rates\label{sec:rateproof}}

Recall that 
\[
\locest(a)\defeq\frac{1}{\nseq_{n}}\sum_{t=1}^{n}K_{h_{n}}(x_{t}-d_{n}a),
\]
where $K$ satisfies \assref{estim}.
\begin{proof}[Proof of \propref{Raydenom}]
 Define 
\begin{align*}
\minpr_{n} & \defeq d_{n}^{-1}x_{(1)} & \minpr & \defeq\inf_{r\in[0,1]}\smlabs{X(r)}\\
\maxpr_{n} & \defeq d_{n}^{-1}x_{(n)} & \maxpr & \defeq\sup_{r\in[0,1]}\smlabs{X(r)}.
\end{align*}
Since $(\alpha,H)=(2,\tfrac{1}{2})$, $X$ is a Brownian motion, and
so by \citeauthor{Ray1963IJM}'s \citeyearpar{Ray1963IJM} theorem,
\begin{equation}
\Prob\left\{ \inf_{a\in[\minpr,\maxpr]_{\varepsilon}}\loctime(a)>0\right\} =1.\label{eq:Ray}
\end{equation}
For a more detailed argument as to why \eqref{Ray} follows from Ray's
theorem, see (6.4.36)--(6.4.38) and the surrounding discussion in
\citet{KS91}. (Note that it is necessary that $\varepsilon>0$ here,
since $\loctime(\minpr)=\loctime(\maxpr)=0$ by the continuity of
$\loctime$.) Under the assumption that $\expect\epsilon_{0}^{2}$,
$X_{n}\wkc X$ on $\ell_{\infty}[0,1]$ by \citet{Hannan1979SP},
whence $(\minpr_{n},\maxpr_{n})\wkc(\minpr,\maxpr)$ by the continuous
mapping theorem (CMT). By \XREFgeneralIP{} in \citet{Duffy14uniform},
$\locest\wkc\loctime$ on $\ell_{\infty}(\reals)$, and thus a further
application of the CMT yields
\begin{equation}
\inf_{x\in R_{n}^{\varepsilon}}\frac{1}{\nseq_{n}}\sum_{t=1}^{n}K_{h_{n}}(x_{t}-x)=\inf_{a\in[\minpr_{n},\maxpr_{n}]_{\varepsilon}}\locest(a)\wkc\inf_{a\in[\minpr,\maxpr]_{\varepsilon}}\loctime(a).\label{eq:infloc}
\end{equation}
Together, \eqref{Ray} and \eqref{infloc} yield \eqref{denomord}.

To obtain \eqref{mostsupp}, we note that
\begin{equation}
\frac{1}{n}\sum_{t=1}^{n}\indic\{x_{t}>(1-\varepsilon)x_{(n)}\}\leq\int_{0}^{1}\indic\{X_{n}(r)\geq(1-\varepsilon)\maxpr_{n}\}\diff r+o_{p}(1).\label{eq:xoutofrange}
\end{equation}
Let $\{g_{k}\}$ denote a uniformly bounded sequence of functions
that converges pointwise to $x\elmap\indic\{x\geq0\}$, from above.
Then
\begin{align}
\int_{0}^{1}\indic\{X_{n}(r)\geq(1-\varepsilon)\maxpr_{n}\}\diff r & \leq\int_{0}^{1}g_{k}[X_{n}(r)-(1-\varepsilon)\maxpr_{n}]\diff r\nonumber \\
 & \wkc\int_{0}^{1}g_{k}[X(r)-(1-\varepsilon)\maxpr]\diff r\nonumber \\
 & \inas\int_{0}^{1}\indic\{X(r)\geq(1-\varepsilon)\maxpr\}\diff r,\label{eq:outrange1}
\end{align}
as $n\goesto\infty$ and then $k\goesto\infty$, by the CMT and the
dominated convergence theorem. Further,
\begin{align}
\int_{0}^{1}\indic\{X(r) & \geq(1-\varepsilon)\maxpr\}\diff r\inas\int_{0}^{1}\indic\{X(r)=\maxpr\}\diff r=\int_{\reals}\indic\{x=\maxpr\}\loctime(x)\diff x=0\label{eq:outrange2}
\end{align}
as $\varepsilon\goesto0$, by dominated convergence, \eqref{otf},
and the fact that $\loctime(\maxpr)=0$. It follows from \eqref{xoutofrange}--\eqref{outrange2}
that $\varepsilon>0$ may be chosen such that\noeqref{eq:outrange1}
\[
\limsup_{n\goesto\infty}\Prob\left\{ \frac{1}{n}\sum_{t=1}^{n}\indic\{x_{t}>(1-\varepsilon)x_{(n)}\}\geq\delta\right\} \leq\frac{\delta}{2}.
\]
By an analogous argument, this holds also when $\indic\{x_{t}>(1-\varepsilon)x_{(n)}\}$
is replaced by $\indic\{x_{t}<(1-\varepsilon)x_{(1)}\}$.
\end{proof}

The proof of \thmref{rates} requires the following two results. For
a matrix $A$, let $\smlnorm A_{T}\defeq\sup_{\smlnorm x=1}\smlnorm{Ax}_{2}$.
\begin{lem}
\label{lem:ltapprox} For every $g\in\BIl$ with $\int\smlabs{g(x)x}\diff x<\infty$,
\begin{equation}
\sup_{a\in\reals}\abs{\frac{1}{\nseq_{n}h_{n}}\sum_{t=1}^{n}g\left(\frac{x_{t}-d_{n}a}{h_{n}}\right)-\locest(a)\int g}=o_{p}(1).\label{eq:ltapptarget}
\end{equation}

\end{lem}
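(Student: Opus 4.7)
The plan is to reduce the claim to the zero-energy order estimate \eqref{0enorderest} of \thmref{orderest} by subtracting off an appropriate multiple of the kernel. Specifically, set
\[
h(x) \defeq g(x) - \left(\int g\right) K(x),
\]
so that, recalling $\locest(a) = (\nseq_n h_n)^{-1} \sum_{t=1}^{n} K((x_t - d_n a)/h_n)$, we have the exact identity
\[
\frac{1}{\nseq_n h_n}\sum_{t=1}^{n} g\!\left(\frac{x_t - d_n a}{h_n}\right) - \locest(a)\int g
= \frac{1}{\nseq_n h_n}\sum_{t=1}^{n} h\!\left(\frac{x_t - d_n a}{h_n}\right).
\]
Thus it suffices to show the right-hand side is $o_p(1)$ uniformly in $a \in \reals$.

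Next I would verify that $h$ satisfies the hypotheses of the second part of \thmref{orderest}. Since $g \in \BIl$ and $K \in \BIl$ by \assref{estim}, $h \in \BIl$; by construction, $\int h = \int g - (\int g)\int K = 0$; and, because $K$ is compactly supported, $\int \smlabs{K(x)x}\diff x < \infty$, so $\int \smlabs{h(x)x}\diff x \leq \int \smlabs{g(x)x}\diff x + \smlabs{\int g}\int\smlabs{K(x)x}\diff x < \infty$. Applying \eqref{0enorderest} to $h$ then gives
\[
\sup_{a\in\reals}\abs{\frac{1}{\nseq_n h_n}\sum_{t=1}^{n} h\!\left(\frac{x_t - d_n a}{h_n}\right)}
= \frac{1}{(\nseq_n h_n)^{1/2}} \cdot \frac{1}{(\nseq_n h_n)^{1/2}}\sup_{a\in\reals}\abs{\sum_{t=1}^{n} h\!\left(\frac{x_t - d_n a}{h_n}\right)}
\isbigop \frac{\log n}{(\nseq_n h_n)^{1/2}}.
\]

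Finally I would argue that this rate is $o_p(1)$ using the lower bandwidth bound in \assref{estim}. Since $h_n \geq \blw h_n$ w.p.a.1 and $\blw h_n^{-1} \lesssim \nseq_n n^{-2\bandmax}$, we have $\nseq_n h_n \geq \nseq_n \blw h_n \gtrsim n^{2\bandmax}$ w.p.a.1, so $\log n / (\nseq_n h_n)^{1/2} \lesssim n^{-\bandmax}\log n = o(1)$. Combined with the display above this yields \eqref{ltapptarget}. The only nontrivial input here is the zero-energy order estimate of \thmref{orderest}, which has already been established; the remainder of the argument is a short reduction, so I do not anticipate any genuine obstacle.
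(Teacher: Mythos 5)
Your proposal is correct and is essentially the paper's own argument: the paper likewise sets $f(x)\defeq g(x)-K(x)\int g$ and applies the zero-energy estimate \eqref{0enorderest} of \thmref{orderest} to conclude the supremum is $\isbigop\log n/(\nseq_{n}h_{n})^{1/2}=o_{p}(1)$. Your additional checks (that $f\in\BIl$, $\int f=0$, $\int\smlabs{f(x)x}\diff x<\infty$, and that the bandwidth bound in \assref{estim} makes the rate vanish) simply spell out what the paper leaves implicit.
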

 
\begin{lem}
\label{lem:suppcvg} Suppose that $Y_{n}(a)$ is a $(k\times k)$
matrix-valued process, such that $Y_{n}(a)$ is positive semi-definite
for every $a\in\reals$ and $n\in\naturals$, and let $\Gamma$ be
a positive definite $(k\times k)$ matrix, for which $\sup_{a\in\reals}\smlnorm{Y_{n}(a)-\Gamma\locest(a)}_{T}=o_{p}(1)$.
Then
\[
\sup_{\{a\mid\locest(a)\geq\varepsilon\}}\smlnorm{Y_{n}(a)^{-1}}_{T}\isbigop1.
\]

\end{lem}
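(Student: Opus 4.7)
The plan is to exploit the fact that, on the set $\{a \mid \locest(a) \geq \varepsilon\}$, the ``limit'' matrix $\Gamma \locest(a)$ has a minimum eigenvalue bounded away from zero by the deterministic constant $c \defeq \varepsilon \lambda_{\min}(\Gamma) > 0$ (recall $\Gamma$ is positive definite and $\locest \geq 0$, since $K \geq 0$). A standard matrix perturbation argument then transfers this lower bound to $Y_n(a)$, yielding the uniform invertibility needed.

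Concretely, I would first observe that $Y_n(a)$ and $\Gamma\locest(a)$ are symmetric (positive semi-definiteness implies symmetry), so by Weyl's inequality,
\[
\lambda_{\min}(Y_n(a)) \geq \lambda_{\min}(\Gamma \locest(a)) - \smlnorm{Y_n(a) - \Gamma \locest(a)}_T \geq c - \smlnorm{Y_n(a) - \Gamma \locest(a)}_T
\]
whenever $\locest(a) \geq \varepsilon$. Next, let $E_n \defeq \{\sup_{a \in \reals} \smlnorm{Y_n(a) - \Gamma\locest(a)}_T \leq c/2\}$; by hypothesis $\Prob(E_n) \goesto 1$. On the event $E_n$, $\lambda_{\min}(Y_n(a)) \geq c/2$ for every $a$ in $\{a \mid \locest(a) \geq \varepsilon\}$, so $Y_n(a)$ is invertible there, and since the operator norm of the inverse of a symmetric positive definite matrix is the reciprocal of its smallest eigenvalue,
\[
\sup_{\{a \mid \locest(a) \geq \varepsilon\}} \smlnorm{Y_n(a)^{-1}}_T \leq \frac{2}{c}
\]
on $E_n$. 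This yields the claimed $O_p(1)$ bound.

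There is no serious obstacle here: the argument is purely a uniform matrix-perturbation estimate, with the only mild subtlety being the need to check that $\lambda_{\min}(\Gamma\locest(a)) \geq c$ on the relevant set (which is immediate, because $\Gamma\locest(a)$ is just a nonnegative scalar multiple of the fixed positive definite matrix $\Gamma$). No further probabilistic structure from the previous sections is required; the lemma simply packages a deterministic conclusion that, given uniform-in-$a$ convergence of $Y_n(\cdot)$ to $\Gamma\locest(\cdot)$, inversion is well-behaved wherever $\locest$ is bounded away from zero.
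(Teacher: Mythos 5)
Your proof is correct and rests on the same core idea as the paper's: the eigenvalue perturbation bound $\lambda_{\min}(Y_{n}(a))\geq\lambda_{\min}(\Gamma\locest(a))-\smlnorm{Y_{n}(a)-\Gamma\locest(a)}_{T}$ together with $\smlnorm{B^{-1}}_{T}=1/\lambda_{\min}(B)$ for symmetric positive definite $B$. The only difference is in the packaging: the paper first normalises $\Gamma$ to the identity and then passes to an almost-surely convergent representative sequence (via Theorem~1.10.3 of van der Vaart and Wellner), arguing pathwise on a probability-one set, whereas you keep $\Gamma$ general (using $\lambda_{\min}(\Gamma\locest(a))\geq\varepsilon\lambda_{\min}(\Gamma)$) and argue directly on the event $E_{n}$ on which the uniform discrepancy is at most $\varepsilon\lambda_{\min}(\Gamma)/2$; since the hypothesis is already an in-probability statement, your route dispenses with the a.s.-representation device and is if anything more elementary. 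One cosmetic remark: your aside that $\locest\geq0$ because $K\geq0$ is neither guaranteed by Assumption~\ref{ass:estim} nor needed -- on the set $\{a\mid\locest(a)\geq\varepsilon\}$ one has $\locest(a)\geq\varepsilon>0$ by definition, which is all the lower bound requires.
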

 
\begin{proof}[Proof of \lemref{ltapprox}]
 Setting $f(x)\defeq g(x)-K(x)\int g$, the left side of \eqref{ltapptarget}
may be written as
\[
\sup_{a\in\reals}\abs{\frac{1}{\nseq_{n}h_{n}}\sum_{t=1}^{n}f\left(\frac{x_{t}-d_{n}a}{h_{n}}\right)}\isbigop\frac{\log n}{(\nseq_{n}h_{n})^{1/2}}=o_{p}(1)
\]
by \thmref{orderest}.
\end{proof}
 
\begin{proof}[Proof of \lemref{suppcvg}]
 By multiplying both $Y_{n}$ and $A\locest$ by $A^{-1}$, we may
reduce the problem to one in which $A=I_{k}$. We may also replace
$(Y_{n},\locest)$ by a distributionally equivalent sequence for which
$(Y_{n},\locest)\inas(\loctime,\loctime)$ in $\ell_{\ucc}(\reals^{2})$:
see Theorem~1.10.3 in \citet{VVW96}. Define $r_{n}^{\omega}(a)\defeq\smlnorm{Y_{n}^{\omega}(a)-I_{k}\loctime_{n}^{\omega}(a)}_{T}$,
and let $\Omega_{0}\subset\Omega$ denote a set, with $\Prob\Omega_{0}=1$,
on which $\locest^{\omega}\goesto\loctime^{\omega}$ in $\ell_{\ucc}(\reals)$,
and $r_{n}^{\omega}\goesto0$ in $\ell_{\infty}(\reals$).

It is easily verified that, for $B$ a real symmetric matrix, and
$z>0$,
\[
\lambda_{\min}(B)=z+\lambda_{\min}(B-zI)\geq z-\smlnorm{B-zI}_{T}
\]
where $\lambda_{\min}(B)$ denotes the smallest eigenvalue of $B$.
Thus, fixing an $\omega\in\Omega_{0}$, 
\begin{align*}
\inf_{\{a\mid\locest^{\omega}(a)\geq\varepsilon\}}\lambda_{\min}[Y_{n}^{\omega}(a)] & \geq\inf_{a\in\reals}\left[\locest^{\omega}(a)-r_{n}^{\omega}(a)\right]\indic\{\locest^{\omega}(a)\geq\varepsilon\}\\
 & \geq\varepsilon-\sup_{a\in\reals}r_{n}^{\omega}(a)\\
 & \goesto\varepsilon
\end{align*}
whence
\[
\sup_{\{a\mid\locest^{\omega}(a)\geq\varepsilon\}}\smlnorm{Y_{n}^{\omega}(a)^{-1}}_{T}=\left(\inf_{\{a\mid\locest^{\omega}(a)\geq\varepsilon\}}\lambda_{\min}[Y_{n}^{\omega}(a)]\right)^{-1}\goesto\varepsilon^{-1},
\]
from which the result follows.
\end{proof}
 
\begin{proof}[Proof of \thmref{rates}]
 Since \eqref{ndrate} follows from arguments given in the course
of \subref{rates}, we provide the proof of only \eqref{linrate}
here. In the notation of \citet[pp.~58f.]{FG96}, $\hat{m}_{L}(x)$
is given by the first element of
\[
\hat{\beta}(x)\defeq(X^{\prime}WX)^{-1}X^{\prime}Wy,
\]
which admits the decomposition
\[
\hat{\beta}(x)-\beta(x)=(X^{\prime}WX)^{-1}X^{\prime}W[\vec{m}_{0}-X^{\prime}\beta]+(X^{\prime}WX)^{-1}X^{\prime}Wu,
\]
where $y=(y_{1},\ldots,y_{n})^{\prime}$, $u=(u_{1},\ldots,u_{n})^{\prime}$,
\begin{align*}
X & \defeq\begin{bmatrix}1 & x_{1}-x\\
1 & x_{2}-x\\
\vdots & \vdots\\
1 & x_{n}-x
\end{bmatrix} & W & \defeq\diag\begin{bmatrix}K_{h_{n}}(x_{1}-x)\\
K_{h_{n}}(x_{2}-x)\\
\vdots\\
K_{h_{n}}(x_{n}-x)
\end{bmatrix} & \vec{m}_{0} & \defeq\begin{bmatrix}m_{0}(x_{1})\\
m_{0}(x_{2})\\
\vdots\\
m_{0}(x_{n})
\end{bmatrix}
\end{align*}
and $\beta(x)=(m_{0}(x),m_{0}^{\prime}(x))^{\prime}$.

Observe $X^{\prime}WX$ is a $(2\times2)$ matrix with $(i,j)$th
element
\[
(X^{\prime}WX)_{ij}(x)=h_{n}^{(i+j-2)}\sum_{t=1}^{n}K_{h_{n}}^{[i+j-2]}(x_{t}-x).
\]
Note that by \lemref{ltapprox},
\[
\frac{1}{\nseq_{n}}\sum_{t=1}^{n}K_{h_{n}}^{[i+j-2]}(x_{t}-d_{n}a)-\locest(a)\int K^{[i+j-2]}\inprob0
\]
in $\ell_{\infty}(\reals)$. Hence, for $D\defeq\diag[1,h_{n}]$,
\begin{gather*}
\nseq_{n}^{-1}[D^{-1}(X^{\prime}WX)D^{-1}](d_{n}a)-\major K\loctime(a)\inprob0
\end{gather*}
in $\ell_{\infty}(\reals)$, where $\major K\defeq[\int K^{[i+j-2]}]$
is positive definite. Thus by \lemref{suppcvg},
\begin{align}
\sup_{x\in A_{n}^{\varepsilon}}\smlnorm{[D(X^{\prime}WX)^{-1}D](x)}_{T} & =\sup_{\{a\mid\locest(a)\geq\varepsilon\}}\smlnorm{[D(X^{\prime}WX)^{-1}D](d_{n}a)}_{T}\isbigop\nseq_{n}^{-1}.\label{eq:denom}
\end{align}

To handle the bias term, note that by a Taylor series expansion 
\begin{align*}
\smlabs{m_{0}(x_{t})-\beta_{0}(x)-\beta_{1}(x)(x_{t}-x)} & \leq m^{\prime\prime}(\tilde{x}_{t})\smlabs{x_{t}-x}^{2}
\end{align*}
for all $x\in A_{n}^{\varepsilon}$, where $\tilde{x}_{t}\in[x,x_{t}]$.
Hence for $i\in\{1,2\}$,
\[
\smlabs{\{D^{-1}X^{\prime}W[\vec{m}_{0}-X^{\prime}\beta]\}_{i}(x)}\leq h_{n}^{2}\derivbnd 2(\tilde{A}_{n}^{\varepsilon})\sum_{t=1}^{n}\smlabs{K_{h_{n}}^{[1+i]}(x_{t}-x)},
\]
whence by \propref{generalIP},
\begin{align}
\frac{1}{\nseq_{n}}\sup_{x\in A_{n}^{\varepsilon}}\smlabs{\{D^{-1}X^{\prime}W[\vec{m}_{0}-X^{\prime}\beta]\}_{i}(x)} & \leq\frac{h_{n}^{2}\derivbnd 2(\tilde{A}_{n}^{\varepsilon})}{\nseq_{n}}\sup_{x\in\reals}\sum_{t=1}^{n}\smlabs{K_{h_{n}}^{[1+i]}(x_{t}-x)}\isbigop h_{n}^{2}\derivbnd 2(\tilde{A}_{n}^{\varepsilon}).\label{eq:linbias}
\end{align}
For the variance term, note that for $i\in\{1,2\}$, 
\begin{align}
\frac{1}{\nseq_{n}}\sup_{x\in A_{n}^{\varepsilon}}\smlabs{[D^{-1}X^{\prime}u]_{i}(x)} & =\frac{1}{\nseq_{n}}\sup_{x\in A_{n}^{\varepsilon}}\abs{\sum_{t=1}^{n}K_{h_{n}}^{[i-1]}(x_{t}-x)u_{t}}\isbigop\frac{\log n}{(\nseq_{n}h_{n})^{1/2}}\label{eq:lincov}
\end{align}
by \thmref{orderest}. \eqref{denom}--\eqref{lincov} now yield the
stated result.\noeqref{eq:linbias}
\end{proof}
\bibliographystyle{econometrica}
\bibliography{time-series}

\cleartooddpage{}

\setcounter{page}{1}

\renewcommand{\thepage}{S\arabic{page}}

\appendix

\section{Supplementary material}

\subsection{Proofs of Lemmas~\ref{lem:fourinv}--\ref{lem:summable}\label{app:auxproofs}}
\begin{proof}[Proof of \lemref{fourinv}]
 Let $g_{k}(z):=g(z)\indic\{\smlabs{g(z)}\leq k\}$. $g_{k}$ is
bounded, and a straightforward extension of the argument used to verify
\XREFfourinv{} in \citet{Duffy14uniform} gives that
\[
\expect f(Y)g_{k}(Z)=\frac{1}{2\pi}\int\hat{f}(\lambda)\expect\left[\e^{-\i\lambda^{\prime}Y}g_{k}(Z)\right]\diff\lambda
\]
for every $k\in\naturals$. Now let $k\goesto\infty$; the left side
converges to $\expect f(Y)g(Z)$ by dominated convergence. For the
right side, using that $Y_{1}$ and $(Y_{2},Z)$ are independent,
we have
\begin{align*}
\abs{\int\hat{f}(\lambda)\expect\left[\e^{-\i\lambda^{\prime}Y}\{g_{k}(Z)-g(Z)\}\right]\diff\lambda} & \leq\left(\int\smlabs{\hat{f}(\lambda)\psi_{Y_{1}}(-\lambda)}\diff\lambda\right)\expect\smlabs{g_{k}(Z)-g(Z)}\\
 & \leq\smlnorm f_{1}\smlnorm{\psi_{Y_{1}}}_{1}\expect\smlabs{g(Z)}\indic\{\smlabs{g(Z)}>k\}\\
 & \goesto0
\end{align*}
using the fact that $\smlabs{\hat{f}(\lambda)}\leq\smlnorm f_{1}$.
\end{proof}
 
\begin{proof}[Proof of \lemref{covbnd}]
 We shall give only the proof of \eqref{cov1b} here; the proof of
\eqref{cov1a} follows by similar arguments, and is somewhat simpler.
Recall from \eqref{decomp2} the decomposition 
\begin{align*}
x_{t+1,t+k,t+k}^{\prime} & =a_{m}\epsilon_{t+k-m}+\sum_{\substack{l=0\\
l\neq m
}
}^{k-1}a_{l}\epsilon_{t+k-l}.
\end{align*}
Let $\mathcal{K}\defeq\{\smlfloor{k/2}+1,\ldots,k-1\}\backslash\{m\}$.
Since the second term on the right is independent of $\eta_{t+k-m}$,
\begin{align*}
\smlabs{\expect\eta_{t+k-m}\e^{-\i\lambda x_{t+1,t+k,t+k}^{\prime}}} & \leq\smlabs{\expect\eta_{t+k-m}\e^{-\i\lambda a_{m}\epsilon_{t+k-m}}}\prod_{l\in\mathcal{K}}\smlabs{\psi(-\lambda a_{l})}\\
 & \leq[\smlabs{a_{m}}\smlabs{\lambda}\expect\smlabs{\eta_{0}\epsilon_{0}}\pmin\expect\smlabs{\eta_{0}}]\prod_{l\in\mathcal{K}}\smlabs{\psi(-\lambda a_{l})}\\
 & \lesssim(c_{m}\smlabs{\lambda}\pmin1)\prod_{l\in\mathcal{K}}\smlabs{\psi(-\lambda a_{l})}
\end{align*}
using $\expect\smlabs{\e^{\i x}-1}\leq\smlabs x$, \eqref{coefcf}
and the Cauchy-Schwarz inequality. Hence
\[
\smlabs{\expect\eta_{t+k-m}\e^{-\i\lambda x_{t+1,t+k,t+k}^{\prime}}}^{q}\lesssim(c_{m}^{q}\smlabs{\lambda}^{q}\pmin1)\prod_{l\in\mathcal{K}}\smlabs{\psi(-\lambda a_{l})}.
\]
Thus the left side of \eqref{cov1b} may be bounded above by a constant
times
\[
\int_{\reals}(z_{1}c_{m}^{q}\smlabs{a_{k}}^{p}\smlabs{\lambda}^{p+q}F(a_{k}\lambda)\pmin z_{2})\prod_{l\in\mathcal{K}}\smlabs{\psi(-\lambda a_{l})}\diff\lambda.
\]
The result now follows by \XREFcfprodbnd{} in the Supplement to \citet{Duffy14uniform}.
\end{proof}
 
\begin{proof}[Proof of \lemref{1stmom}]
 \enuref{1stmom} follows by arguments analogous to those used in
the proof of \XREFstmom{} in \citet{Duffy14uniform}. For \enuref{1stcov},
we recall from \eqref{decomp1} the decomposition
\begin{align}
x_{t+k} & =x_{t,t+k}^{\ast}+x_{t+1,t+k,t+k}^{\prime}.\label{eq:smpldecmp}
\end{align}
Thence by Fourier inversion (\lemref{fourinv}) and \lemref{covbnd}\enuref{cov1},
\begin{align*}
\smlabs{\expect_{t}f(x_{t+k})\eta_{t+k-m}} & =\biggabs{\frac{1}{2\pi}\int_{\reals}\hat{f}(\lambda)\e^{-\i\lambda x_{t,t+k}^{\ast}}\expect[\eta_{t+k-m}\e^{-\i\lambda x_{t+1,t+k,t+k}^{\prime}}]\diff\lambda}\\
 & \lesssim\smlnorm f_{1}\int_{\reals}\smlabs{\expect\eta_{t+k-m}\e^{-\i\lambda x_{t+1,t+k,t+k}^{\prime}}}\diff\lambda,
\end{align*}
using the fact that $\smlabs{\hat{f}(\lambda)}\leq\smlnorm f_{1}$.
The result now follows by \lemref{covbnd}\enuref{cov1}.
\end{proof}
 
\begin{proof}[Proof of \lemref{summable}]
 For \enuref{dm2}, note that $\{d_{t}^{-2}\}$ is regularly varying
with index $-2H$, whence by Karamata's theorem and Proposition~1.5.9a
in \citet{Bingham87}, $\{\sum_{t=1}^{n}d_{t}^{-2}\}$ is either slowly
varying (when $H\leq1/2$), or regularly varying with index $1-2H$.
In comparison, $\{\nseq_{n}^{1/2}\}$ is regularly varying with index
\[
\frac{1}{2}(1-H)>1-2H
\]
for all $H\in(\tfrac{1}{3},1)$; thus \enuref{dm2} holds. \enuref{dna}
follows from the fact that $\{k^{-1/2}d_{k}^{-3/2}\}$ is regularly
varying with index
\[
-\frac{1}{2}-\frac{3}{2}H<-\frac{1}{2}-\frac{3}{2}\cdot\frac{1}{3}=-1
\]
For \enuref{dth}, note that $\{c_{m}\}$ and $\{m^{1/2}\nseq_{m}\}$
are regularly varying with indices $H-1/\alpha<1$ and
\[
\frac{1}{2}+1-H<\frac{3}{2}-\frac{1}{3}=\frac{7}{6}
\]
respectively. Thus the result follows from \assref{reg}\enuref{dist}.
\end{proof}
 \cleartooddpage{}

\subsection{List of key notation\label{app:notation}}

\newref{ass}{name = Ass.~}

\newref{sub}{name = Sec.~}

\newref{sec}{name = Sec.~}

\newref{rem}{name = Rem.~}

\newref{prop}{name = Prop.~}

\subsubsection*{Greek and Roman symbols}

Listed in (Roman) alphabetical order. Greek symbols are listed according
to their English names: thus $\Omega$, as `omega', appears before
$\xi$, as `xi'.

\noindent %
\begin{longtable}[l]{>{\raggedright}p{0.11\textwidth}>{\raggedright}p{0.69\textwidth}l}
$a_{i}$ & partial sum of $\{\phi_{i}\}$, $a_{i}\defeq\sum_{j=0}^{i}\phi_{j}$
\cellfill & \secref{prelim}\tabularnewline[\doublerulesep]
$A_{n}^{\varepsilon}$ & subset of $\reals$ on which the normalised `signal' exceeds $\epsilon$\cellfill & \eqref{An}\tabularnewline[\doublerulesep]
$\tilde{A}_{n}^{\varepsilon}$ & slight enlargement of $A_{n}^{\varepsilon}$ \cellfill & \eqref{psi1bnd}\tabularnewline[\doublerulesep]
$\alpha$ & index of domain of attraction of $\epsilon_{0}$ \cellfill & \assref{reg}\enuref{iidseq}\tabularnewline[\doublerulesep]
$\BI$ & bounded and integrable functions on $\reals$ \cellfill & \secref{intro}\tabularnewline[\doublerulesep]
$\BIl$ & Lipschitz functions in $\BI$ \cellfill & \subref{model}\tabularnewline[\doublerulesep]
$c_{n}$ & norming sequence \cellfill & \eqref{ck}\tabularnewline[\doublerulesep]
$C$ & generic constant \cellfill & \secref{intro}\tabularnewline[\doublerulesep]
$d_{n}$ & norming sequence used to define $X_{n}$ \cellfill & \eqref{dkek}\tabularnewline[\doublerulesep]
$\delta_{n}(\Fset)$ & appears in \propref{covtruncord} \cellfill & \eqref{delnf}\tabularnewline[\doublerulesep]
$\nseq_{n}$ & norming sequence used to define $\locest^{f}$ \cellfill & \eqref{dkek}\tabularnewline[\doublerulesep]
$\epsilon_{t}$ & i.i.d.\ sequence \cellfill & \assref{reg}\enuref{iidseq}\tabularnewline[\doublerulesep]
$\eta_{t}$ & i.i.d.\ sequence \cellfill & \assref{reg}\enuref{iidseq}\tabularnewline[\doublerulesep]
$\eta_{t}^{(\leq)},\eta_{t}^{(>)}$ & truncated version of $\eta_{t}$ and remainder \cellfill & \eqref{etatrunc}\tabularnewline[\doublerulesep]
$\smlnorm{\eta}_{n}$ & defined as $\smlnorm{\eta}_{n}\defeq\smlnorm{\eta_{0}^{(\leq)}}_{\infty}$
\cellfill & \subref{order}\tabularnewline[\doublerulesep]
$\expect_{t}$ & expectation conditional on $\filt_{-\infty}^{t}$\cellfill & \secref{prelim}\tabularnewline[\doublerulesep]
$\filt_{s}^{t}$ & $\sigma$-field generated by $\{\epsilon_{r}\}_{r=s}^{t}$ \cellfill & \secref{prelim}\tabularnewline[\doublerulesep]
$\Fset,\Fset_{n},\Gset$ & subsets of $\BI$ \cellfill & \subref{order}\tabularnewline[\doublerulesep]
$G$ & specific slowly varying function\cellfill & \eqref{dofattr}\tabularnewline[\doublerulesep]
$h,h_{n}$ & bandwidth parameter (or sequence) \cellfill & \assref{estim}\tabularnewline[\doublerulesep]
$\blw h_{n},\abv h$ & lower and upper bounds defining $\Hspc_{n}$ \cellfill & \assref{estim}\tabularnewline[\doublerulesep]
$H$ & sets the decay rate of $\phi_{k}$ as $k\goesto\infty$ \cellfill & \assref{reg}\enuref{regproc}\tabularnewline[\doublerulesep]
$\Hspc_{n}$ & set of allowable bandwidths \cellfill & \assref{estim}\tabularnewline[\doublerulesep]
$K$ & smoothing kernel \cellfill & \assref{estim}\tabularnewline[\doublerulesep]
$\ell_{\ucc}(Q)$ & bounded on compacta functions on $Q$, with ucc topology \cellfill & \secref{intro}\tabularnewline[\doublerulesep]
$\ell_{\infty}(Q)$ & bounded functions on $Q$, with uniform topology \cellfill & \secref{intro}\tabularnewline[\doublerulesep]
$\loctime$ & local time of $X$ \cellfill & \eqref{otf}\tabularnewline[\doublerulesep]
$\locest^{f}$ & sample estimate of local time \cellfill & \eqref{uniformlaw}\tabularnewline[\doublerulesep]
$m_{0}$ & regression function \cellfill & \eqref{basicreg}\tabularnewline[\doublerulesep]
$\abv m_{i}(A)$ & bounds the $i$th derivative of $m_{0}$ on $A\subseteq\reals$\cellfill & \eqref{mregularity}\tabularnewline[\doublerulesep]
$\hat{m}$ & local level (Nadaraya-Watson) estimate of $m_{0}$ \cellfill & \subref{model}\tabularnewline[\doublerulesep]
$\hat{m}_{L}$ & local linear estimate of $m_{0}$ \cellfill & \subref{model}\tabularnewline[\doublerulesep]
$\M_{nmk}f$ & martingale components in decomposition of $\S_{nm}f$\cellfill & \eqref{Snmdecomp}\tabularnewline[\doublerulesep]
$\N_{nm}f$ & remainder from decomposition of $\S_{nm}f$\cellfill & \eqref{Snmdecomp}\tabularnewline[\doublerulesep]
$\Omega$ & sample space \cellfill & \secref{rateproof}\tabularnewline[\doublerulesep]
$p_{0}$ & chosen such $\psi\in L^{p_{0}}$ \textbf{\cellfill} & \assref{reg}\enuref{iidseq}\tabularnewline[\doublerulesep]
$\phi_{k}$ & coefficients defining the linear process $v_{t}$ \cellfill & \assref{reg}\enuref{regproc}\tabularnewline[\doublerulesep]
$\pi_{k}$ & slowly varying sequence related to $\phi_{k}$ \cellfill & \assref{reg}\enuref{regproc}\tabularnewline[\doublerulesep]
$\psi$ & characteristic function of $\epsilon_{0}$ \cellfill & \assref{reg}\enuref{iidseq}\tabularnewline[\doublerulesep]
$\Psi_{kn}$ & components in decomposition of $\hat{m}$ \cellfill & \eqref{decmp}\tabularnewline[\doublerulesep]
$q_{0}$ & chosen such that $\expect\smlabs{\eta_{0}}^{q_{0}}<\infty$ \cellfill & \assref{reg}\enuref{iidseq}\tabularnewline[\doublerulesep]
$r_{0}$ & used to define order of $\blw h_{n}$ \cellfill & \assref{estim}\tabularnewline[\doublerulesep]
$R_{n}^{\varepsilon}$ & truncated range of $\{x_{t}\}_{t=1}^{n}$ \cellfill & \subref{rates}\tabularnewline[\doublerulesep]
$\varrho_{n}$ & norming sequence \cellfill & \eqref{stablecvg}\tabularnewline[\doublerulesep]
$\S_{n},\S_{nm}$ & covariance summation operator, $\S_{n}f\defeq\sum_{t=1}^{n}f(x_{t})u_{t}^{(\leq)}$
\cellfill & \eqref{Snf}, \eqref{Snmdecomp}\tabularnewline[\doublerulesep]
$\tau_{1}$ & function $x\elmap\e^{x}-1$ \cellfill & \secref{prelim}\tabularnewline[\doublerulesep]
$\theta_{k}$ & coefficients defining the linear process $u_{t}$ \cellfill & \eqref{disturbance}\tabularnewline[\doublerulesep]
$u_{t}$ & regression disturbance; linear process built from $\{\eta_{t}\}$
\cellfill & \eqref{basicreg}, \eqref{disturbance}\tabularnewline[\doublerulesep]
$u_{t}^{(\leq)},u_{t}^{(>)}$ & analogues of $u_{t}$ built from $\{\eta_{t}^{(\leq)}\}$ and $\{\eta_{t}^{(>)}\}$
\cellfill & \eqref{utrunc}\tabularnewline[\doublerulesep]
$v_{t}$ & linear process built from $\{\epsilon_{t}\}$ \cellfill & \eqref{regproc}\tabularnewline[\doublerulesep]
$x_{t}$ & regressor process; partial sum of $\{v_{t}\}$ \cellfill & \eqref{basicreg}, \eqref{regproc}\tabularnewline[\doublerulesep]
$x_{(i)}$ & $i$th order statistic of $\{x_{t}\}_{t=1}^{n}$; $x_{(i)}\leq x_{(i+1)}$
\cellfill & \subref{rates}\tabularnewline[\doublerulesep]
$x_{s,t}^{\ast}$ & $\filt_{-\infty}^{s}$-measurable component of $x_{t}$ \cellfill & \eqref{decomp1}\tabularnewline[\doublerulesep]
$x_{s,r,t}^{\prime}$ & $\filt_{s}^{r}$-measurable component of $x_{t}$ \cellfill & \eqref{decomp2}\tabularnewline[\doublerulesep]
$X$ & finite-dimensional limit of $X_{n}$, an LFSM \cellfill & \eqref{Xdef}\tabularnewline[\doublerulesep]
$X_{n}$ & process constructed from $\{x_{t}\}$ \cellfill & \subref{asympreg}\tabularnewline[\doublerulesep]
$\xi_{mkt}f$ & martingale difference components of $\M_{nmk}f$ \cellfill & \eqref{xi}\tabularnewline[\doublerulesep]
$y_{t}$ & dependent variable in the regression \cellfill & \eqref{basicreg}\tabularnewline[\doublerulesep]
$Z_{\alpha}$ & $\alpha$-stable Lvy motion \cellfill & \eqref{stablecvg}\tabularnewline[\doublerulesep]
\end{longtable}

\newpage{}

\subsubsection*{Symbols not connected to Greek or Roman letters}

Ordered alphabetically by their description.

\noindent %
\begin{longtable}[l]{>{\raggedright}p{0.11\textwidth}>{\raggedright}p{0.69\textwidth}l}
$\eqdist$ & both sides have the same distribution \cellfill & \remref{exog}\tabularnewline[\doublerulesep]
$\smlceil{\cdot}$ & ceiling function \cellfill & \secref{intro}\tabularnewline[\doublerulesep]
$\inprob$ & converges in probability to \cellfill & \subref{rates}\tabularnewline[\doublerulesep]
$\fdd$ & finite-dimensional convergence \cellfill & \secref{intro}\tabularnewline[\doublerulesep]
$\smlfloor{\cdot}$ & floor function (integer part) \cellfill & \secref{intro}\tabularnewline[\doublerulesep]
$\hat{f}$ & Fourier transform of $f$ \cellfill & \secref{prelim}\tabularnewline[\doublerulesep]
$\lesssim$ & left side bounded by a constant times the right side \cellfill & \secref{intro}\tabularnewline[\doublerulesep]
$\isbigop$ & left side bounded in probability by the right side \cellfill & \secref{intro}\tabularnewline
 & ($a_{n}\isbigop b_{n}$ if $a_{n}=O_{p}(b_{n})$) & \tabularnewline[\doublerulesep]
$\smlnorm f_{p}$ & $L^{p}$ norm, $(\int\smlabs f^{p})^{1/p}$, for function $f$ \cellfill & \secref{intro}\tabularnewline
 & denotes $\sup_{x\in\reals}\smlabs{f(x)}$ when $p=\infty$ & \tabularnewline[\doublerulesep]
$\smlnorm X_{p}$ & $L^{p}$ norm, $(\expect\smlabs X^{p})^{1/p}$, for random variable
$X$ \cellfill & \secref{intro}\tabularnewline[\doublerulesep]
$\smlcv M$ & martingale conditional variance \cellfill & \eqref{qvcv}\tabularnewline[\doublerulesep]
$\smlqv M$ & martingale sum of squares \cellfill & \eqref{qvcv}\tabularnewline[\doublerulesep]
$\#\Fset$ & number of elements in the (finite) set $\Fset$ \cellfill & \propref{covtruncord}\tabularnewline[\doublerulesep]
$\smlnorm X_{\tau}$ & Orlicz norm associated to function $\tau$ \cellfill & \secref{prelim}\tabularnewline[\doublerulesep]
$f^{[p]}$ & product $x\elmap x^{p}f(x)$ \cellfill & \eqref{psi1bnd}\tabularnewline[\doublerulesep]
$\sim$ & strong asymptotic equivalence \cellfill & \secref{intro}\tabularnewline
 & ($a_{n}\sim b_{n}$ if $\lim_{n\goesto\infty}a_{n}/b_{n}=1$) & \tabularnewline[\doublerulesep]
$\smlnorm{\Fset}$ & supremum of norm $\smlnorm{\cdot}$ over $\Fset$: $\sup_{f\in\Fset}\smlnorm f$
\cellfill & \subref{order}\tabularnewline[\doublerulesep]
$\asymp$ & weak asymptotic equivalence \cellfill & \secref{intro}\tabularnewline[\doublerulesep]
 & ($a_{n}\asymp b_{n}$ if $\lim_{n\goesto\infty}a_{n}/b_{n}\in(-\infty,\infty)\backslash\{0\}$) & \tabularnewline[\doublerulesep]
$\wkc$ & weak convergence (\citealp{VVW96}) \cellfill & \secref{intro}\tabularnewline[\doublerulesep]
\end{longtable}
\end{document}